\documentclass[10pt]{amsart}
\usepackage{amsthm, amsmath,cleveref, amsfonts, amssymb, enumerate, textcmds, tensor, enumitem, mathdots}
\usepackage{pst-node}
\usepackage{tikz-cd} 
\usepackage{graphicx,tikz}
\usepackage{enumerate}
\usepackage{pinlabel}
\usepackage[colorinlistoftodos]{todonotes}
\setlength{\textwidth}{5.5in}
\setlength{\oddsidemargin}{0.50in}
\setlength{\evensidemargin}{0.40in}

\newcommand{\R}{\mathbb R}

\newcommand{\N}{\mathbb N}

\newcommand{\p}{\mathfrak{p}}
\newcommand{\g}{\mathfrak{g}}
\newcommand{\kk}{\mathfrak{k}}

\newcommand{\ft}{\mathfrak{t}}

\DeclareMathOperator{\dbar}{\overline{\partial}}

\newtheorem{thm}{Theorem}[section]
\newtheorem{lem}[thm]{Lemma}

\newtheorem{prop}[thm]{Proposition}
\newtheorem{cor}[thm]{Corollary}

\newtheorem*{thma}{Theorem A}
\newtheorem*{thmb}{Theorem B}
\newtheorem*{thmc}{Theorem C}
\newtheorem*{thmd}{Theorem D}

\newtheorem*{cora}{Corollary A}

\newtheorem*{corc}{Corollary C}

\newtheorem*{core}{Corollary E}
\newtheorem*{thme}{Theorem E}

\theoremstyle{definition}
\newtheorem{defn}[thm]{Definition}
\newtheorem{remark}[thm]{Remark}

\DeclareMathOperator{\C}{\mathbb{C}}

\DeclareMathOperator{\tr}{\textrm{trace}}

\begin{document}
\title{Local asymptotics for Hitchin's equations and high energy harmonic maps}
\author[Nathaniel Sagman]{Nathaniel Sagman}
\author[Peter Smillie]{Peter Smillie}
\begin{abstract}
We find new estimates and a new asymptotic decoupling phenomenon for solutions to Hitchin's self-duality equations at high energy. These generalize previous results for generically regular semisimple Higgs bundles to arbitrary Higgs bundles. We apply our estimates to the Hitchin WKB problem and to high energy harmonic maps to symmetric spaces and buildings. 

\end{abstract}
\maketitle

\section{Introduction} Let $S$ be a Riemann surface with canonical bundle $\mathcal{K}$. A Higgs bundle over $S$ is the data of a holomorphic vector bundle $(E,\overline{\partial}_E)$ over $S$ and a holomorphic section $\phi$ of $\textrm{End}(E)\otimes\mathcal{K}.$
A Hermitian metric $h$ on $(E,\overline{\partial}_E)$ determines a real structure $*_h$ on $\mathrm{End}(E)$ and a Chern connection $\nabla_h$ on $E$ with curvature form $F(h).$ We say that $h$ solves Hitchin's self-duality equations if 
\begin{equation}\label{1}
    F(h)+[\phi,\phi^{*_h}]=0.
\end{equation}
Equation (\ref{1}) expresses that the connection $$D_h=\nabla_h + \phi + \phi^{*_h}$$ is flat. Choosing a basepoint on $S$, we obtain a holonomy representation $\rho$ for $D_h,$ and the Hermitian metric is equivalent to a $\rho$-equivariant harmonic map from the universal cover $\tilde{S}$ to the Riemannian symmetric space $X_n$ of $\textrm{SL}(n,\C)$. For this reason, $h$ is called a harmonic metric, and $(E,\overline{\partial}_E,\phi,h)$ a harmonic bundle. There is an algebraic notion of stability for Higgs bundles, and the non-abelian Hodge correspondence, proved by Donaldson \cite{D}, Corlette \cite{C}, Hitchin \cite{Hi}, and Simpson, \cite{Si} says that if $(E,\overline{\partial}_E,\phi)$ is a stable Higgs bundle of degree $0$ on a closed surface $S$, then there is a unique metric $h$ solving (\ref{1}), and conversely any irreducible action of $\pi_1(S)$ on $X_n$ yields the data of an equivariant harmonic map to $X_n$ and a Higgs bundle.

Motivated by questions about hyper-K{\"a}hler geometry of the moduli space of Higgs bundles and its compactifications, $L^2$ cohomology, compactifications of character varieties, harmonic maps, WKB problems \cite{GMN}, \cite{KNPS}, \cite{MSWW}, etc., many authors have studied the behaviour of solutions to (\ref{1}) along rays of Higgs bundles $(E,\overline{\partial}_E,R\phi)$, $R>0,$ as we take $R\to \infty$. For example, in the paper \cite{SS} we made use of high energy estimates of Mochizuki \cite{Mo} to disprove the Labourie conjecture about the uniqueness of equivariant minimal immersions for Hitchin representations \cite{L1}, which was a central problem in higher Teichm{\"u}ller theory. 

Up until now, most results on this topic have applied to generically regular semisimple Higgs bundles, in the sense that the Higgs field has distinct eigenvalues almost everywhere. In this paper we find new estimates and results for arbitrary Higgs bundles. Our original motivation comes from minimal surfaces: in an upcoming paper, we will apply Theorem C of this paper to find more unstable minimal surfaces in symmetric spaces, expanding on the results of \cite{SS} and in particular disproving the Labourie conjecture for genus 2. In the end, our main results build on the theory of asymptotic decoupling for Hitchin's equations, in line with work of Mazzeo-Swoboda-Weiss-Witt \cite{MSWW}, Fredrickson \cite{Fre}, Mochizuki \cite{Mo}, and Mochizuki-Szab{\'o} \cite{MSz}, and we also contribute to the Hitchin WKB problem and the theory of high energy harmonic maps, relevant to works of Gaiotto-Moore-Neitzke \cite{GMN} and Katzarkov-Noll-Pandit-Simpson \cite{KNPS}.

We expect that our work here can be applied to study Hitchin-Simpson equations on higher dimensional K{\"a}hler manifolds (in special cases these become the Kapustin-Witten and Vafa-Witten equations). For work on the Hitchin-Simpson equations for large Higgs fields, see for instance,  \cite{Chen} and \cite{He}.

As well, we hope that our work will have further applications. For instance, it would be interesting to study the asymptotic geometry of the hyper-K{\"a}hler metric on the moduli space of Higgs bundles in non-generic directions, or to study solutions to Hitchin's equations on the disk for general Higgs bundles (see \cite{LMo}, \cite{MSz}, and \cite{Mohyp}). 

\subsection{Conventions}
When dealing with constants, we will write $C=C(x,y,z)$ to  mean that $C$ depends on quantities or objects $x,y,$ and $z$. A constant labelled $C$ in one result may differ from a constant labelled $C$ in another. Constants $C$ may even change in the course of a proof. We'll frequently replace bounds such as $Cde^{-cd}$, for $c>1,$ by $Ce^{-cd},$ absorbing the linear term into the exponential. For $r>0$, let $D(r)=\{z\in \C: |z|<r\}$. We use the flat metric on $D(r)$ to measure norms of differential forms. By conformal invariance, local analysis of Hitchin's equations on a Riemann surface can be carried out on $D(1)$. 

\subsection{Previous results on local asymptotics}  Let $\phi$ be a Higgs field on the trivial bundle $E=D(1)\times \C^n$. By holomorphicity, the dimensions of the generalized eigenspaces for $\phi$ are constant, apart from at a discrete set.
\begin{defn}\label{def: critical set}
    The critical set $B$ of $\phi$ is the discrete subset on which the number of generalized eigenspaces of $\phi$ is not maximal.
\end{defn}
Writing $\phi$ as a matrix-valued $1$-form $\phi(z)=f(z)dz$, the eigenvalue gap is 
$$\textrm{Gap}(\phi)(z) = \min\{ |\alpha(z) - \beta(z)|: \alpha(z),\beta(z) \textrm{ are distinct eigenvalues of }f\}.$$

\begin{defn} \label{dfn:S(d,A)}
Let $S_n(d,A)$ be the set of Higgs fields $\phi$ on the rank $n$ trivial bundle over $D(1)$ for which the critical set is empty, $\textrm{Gap}(\phi)\geq d$, and all eigenvalues of $f$ are bounded above by $Ad.$ 
\end{defn}

\begin{remark}[Scaling properties] If $\phi \in S_n(d,A)$, then $R\phi \in S_n(Rd,A)$. If $\phi$ is a Higgs field on the trivial bundle over $D(r)$ with $\mathrm{Gap}(\phi) \geq d$ and all eigenvalues are of modulus at most $Ad$, then $r^*\phi \in S_n(rd, A)$.
\end{remark}

We say that a splitting $E=\oplus_{i=1}^m E_i$ is $\epsilon$-almost orthogonal with respect to a metric $h$ (at least up to a constant - see Definition \ref{defn: almost orthogonal} and Lemma \ref{lem: almost orthogonal}) if for any sections $s_i$ of $E_i$ and $s_j$ of $E_j$, $i\neq j,$ we have $|h(s_i,s_j)|\leq \epsilon|s_i|_h|s_j|_h.$ Drawing on estimates of Simpson, Mochizuki proves the following.
\begin{thm}[Lemma 1.3 in \cite{Mo}]\label{GRSdecoupling} Let $\phi \in S_n(d,A)$, let $E=\oplus_{i=1}^m E_i$ denote the generalized eigenspace decomposition for $\phi$, and let $h$ be a harmonic metric for $\phi$. On any subdisk $D(r)$, the splitting $E = \oplus_{i=1}^m E_i$ is $Ce^{-cd}$-almost orthogonal for $h$ for constants $C=C(n, r, A)$ and $c=c(n,r,A)$.
\end{thm}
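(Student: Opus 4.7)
The plan is to compare the harmonic metric $h$ with a smooth model metric $h_0$ adapted to the splitting $E = \oplus_i E_i$, and to prove that the off-diagonal blocks of the comparison endomorphism $s := h_0^{-1} h$ decay like $e^{-cd}$ on $D(r)$. Since the critical set is empty on $D(1)$, the generalized eigenspace decomposition is holomorphic, and one may choose $h_0$ to be any smooth Hermitian metric making the $E_i$ mutually orthogonal. Writing $s = \sum_{i,j} s_{ij}$ with $s_{ij} \in \mathrm{Hom}(E_j, E_i)$, and noting that $s$ is positive self-adjoint for $h_0$, one checks that modulo factors controlled by $|s|_{h_0} + |s^{-1}|_{h_0}$, the $\epsilon$-almost orthogonality of the splitting for $h$ is equivalent to $|s_{ij}|_{h_0} \leq \epsilon$ for all $i\neq j$. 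The problem therefore reduces to bounding the off-diagonal blocks of $s$.

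There are two analytic inputs. The first is Simpson's a priori $L^\infty$ bound for harmonic metrics on the disk: using that $\log\tr s$ and $\log\tr s^{-1}$ satisfy subharmonic-type inequalities with an error controlled by the model quantity $F(h_0) + [\phi,\phi^{*_{h_0}}]$, one obtains $|s|_{h_0} + |s^{-1}|_{h_0} \leq C_0(n, r', A)$ on a slightly larger subdisk $D(r')$ with $r < r' < 1$. The second is a Bochner-type inequality for the off-diagonal blocks. Because each $E_i$ is $\phi$-invariant, in the splitting $\phi$ is block-diagonal with $\phi|_{E_i} = (\alpha_i + N_i)\,dz$, $N_i$ nilpotent. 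A direct computation shows that the $(i,j)$ block of $[\phi, \phi^{*_h}]$ has a leading term proportional to $(\alpha_i - \alpha_j)\, s_{ij}$, of size at least $d|s_{ij}|_{h_0}$, plus lower-order nilpotent contributions. Combining this with Hitchin's equation $F(h) + [\phi,\phi^{*_h}] = 0$, expressing $F(h) - F(h_0)$ in terms of $\partial, \bar\partial$ of $s$, and running the standard Weitzenb\"ock computation as in \cite{Si} and \cite{Mo}, one gets on $D(r')$
\begin{equation*}
    \Delta |s_{ij}|_{h_0}^2 \;\geq\; c\,d^2\,|s_{ij}|_{h_0}^2 - C,
\end{equation*}
for constants $c, C$ depending only on $n, A$, and $C_0$.

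The conclusion then follows from an exponential barrier argument on $D(r')$: the a priori bound controls $|s_{ij}|_{h_0}$ on $\partial D(r')$, and comparison with a radial subsolution of the form $C/(cd^2) + K\,e^{-c' d\,\mathrm{dist}(\cdot,\partial D(r'))}$ gives $|s_{ij}|_{h_0} \leq C e^{-cd}$ on $D(r)$. Translating back through $h = h_0 s$ yields the stated $Ce^{-cd}$-almost orthogonality. I expect the main obstacle to be the Bochner inequality itself: in the generically regular semisimple case studied previously, all $N_i = 0$ and the gap $d$ appears cleanly in the leading term. In the general case, one must do careful block-matrix bookkeeping to absorb the nilpotent parts of $\phi$ within each $E_i$, using that the hypothesis $|\alpha_i| \leq A d$ forces $|N_i|$ to also be of order $d$; the nilpotent terms then shrink $c$ by a factor depending on $A$ but do not destroy the quadratic gain in $d$.
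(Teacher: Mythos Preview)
This statement is quoted from Mochizuki \cite{Mo} and is not proved in the present paper, so there is no paper proof to compare with directly. However, the paper does indicate the method: in section~\ref{ThmAsection} it says that the proof of the asymptotic orthogonality theorem has the same starting point as Theorem~B, namely Lemma~\ref{lem: origsim}. In other words, Mochizuki's argument applies Simpson's inequality
\[
\partial_z\partial_{\overline z}\log |s|_h^2 \;\geq\; \frac{|[s,f^{*_h}]|_h^2}{|s|_h^2}
\]
directly to holomorphic sections $s$ of $\mathrm{End}(E)$ commuting with $\phi$ (essentially the projections $\pi_i$), with no auxiliary model metric $h_0$. The eigenvalue gap enters because $[\pi_i,f^{*_h}]=[\pi_i-\pi_i^{*_h},f^{*_h}]$, and the adjoint action of $f^{*_h}$ on an off-block element sees the eigenvalue differences; one then runs a barrier argument on $\log|\pi_i|_h^2$. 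Your route via $s=h_0^{-1}h$ is genuinely different and, as you acknowledge, more block-matrix bookkeeping.

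There is a concrete gap. Your final paragraph claims that ``the hypothesis $|\alpha_i|\leq Ad$ forces $|N_i|$ to also be of order $d$.'' This is false: membership in $S_n(d,A)$ constrains only the eigenvalues of $\phi$, not the nilpotent parts $N_i$ of $\phi|_{E_i}$ in any holomorphic frame. A posteriori one has $|\phi_n|_h\leq C$ (this is exactly Theorem~B of the paper, proved \emph{using} Theorem~\ref{GRSdecoupling}), and one has the linear bound $|\phi|_h\leq Cd+C$ from Proposition~\ref{firstbound}, but neither is available in a fixed model metric $h_0$ chosen before you know $h$. Without a bound on the nilpotent blocks, the ``lower-order nilpotent contributions'' in your $(i,j)$-block computation are not lower order, and the asserted inequality $\Delta|s_{ij}|_{h_0}^2\geq c d^2|s_{ij}|_{h_0}^2-C$ with $C$ independent of $d$ does not follow. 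Relatedly, since $\phi^{*_h}=s^{-1}\phi^{*_{h_0}}s$ couples all blocks, the ``direct computation'' you cite for the $(i,j)$ block of $[\phi,\phi^{*_h}]$ does not isolate $s_{ij}$ cleanly. The fix is to abandon the model-metric comparison and work intrinsically with $h$ via Lemma~\ref{lem: origsim}, as Mochizuki does; this avoids any need to control the nilpotent part of $\phi$.
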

We remark that, unlike most results of \cite{Mo}, this theorem does not require that $\phi$ be regular semisimple. This theorem will play an important role in our paper. We also point out that Collier-Li prove sharper results on the asymptotics of harmonic metrics in the case of cyclic and sub-cyclic Higgs bundles in the Hitchin section \cite{CL}.

Recall that an endomorphism of a vector space is regular if the degree of its minimal polynomial is equal to that of its characteristic polynomial, and semisimple if it is diagonalizable; in particular, $\phi$ is regular semisimple if and only if each $E_i$ has dimension $1$. From Theorem \ref{GRSdecoupling}, Mochizuki deduces:
\begin{cor}[Theorem 1.2 in \cite{Mo}]\label{GRSasympdecoupling}
Assuming $\phi$ is regular semisimple, Hitchin's equations asymptotically decouple:
\begin{equation*}
    |F(h)|_{h} \leq Ce^{-cd} \textrm{ and } |[\phi,\phi^{*_{h}}]|_{h} \leq Ce^{-cd}.
\end{equation*}
\end{cor}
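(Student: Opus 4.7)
The starting point is that Hitchin's equation (\ref{1}) gives $F(h) = -[\phi,\phi^{*_h}]$ pointwise, so the two bounds are equivalent. Thus the problem reduces to estimating the commutator $[\phi,\phi^{*_h}]$ using the almost orthogonal decomposition from Theorem \ref{GRSdecoupling}.

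I would work in a carefully chosen local orthonormal frame on a fixed subdisk $D(r)$. In the regular semisimple case, the decomposition $E = \oplus_{i=1}^{n} E_i$ is into line bundles, and by Theorem \ref{GRSdecoupling} it is $Ce^{-cd}$-almost orthogonal for $h$. Pick local holomorphic generators $v_i$ of $E_i$, rescaled so that $|v_i|_h \equiv 1$. Since each $E_i$ is $\phi$-invariant and one-dimensional, $\phi$ is represented in the frame $\{v_i\}$ by the diagonal matrix $D\,dz = \mathrm{diag}(\alpha_1, \ldots, \alpha_n)\,dz$, with $|\alpha_i| \leq Ad$. The Gram matrix of $h$ is $H_{ij} = h(v_i, v_j)$, with $H_{ii} = 1$ and $|H_{ij}| \leq C e^{-cd}$ for $i \neq j$, so $H = I + \varepsilon$ with $\|\varepsilon\| \leq C e^{-cd}$.

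Now change to the $h$-orthonormal frame $\tilde v = v\, H^{-1/2}$. Since $H^{\pm 1/2} = I + O(e^{-cd})$, the matrix of $\phi$ in the orthonormal frame is
\[
M\,dz = H^{1/2} D H^{-1/2}\,dz = (D + E)\,dz, \qquad \|E\| \leq C' A d\, e^{-cd},
\]
and the matrix of $\phi^{*_h}$ is $M^*\,d\bar z$. Expanding the graded commutator gives $[\phi,\phi^{*_h}] = [M, M^*]\, dz \wedge d\bar z$ with
\[
[M, M^*] = [D, D^*] + [D, E^*] + [E, D^*] + [E, E^*].
\]
The first term vanishes because $D$ and $D^*$ are both diagonal. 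The remaining three are bounded by $4\|D\|\|E\| + 2\|E\|^2 \leq C'' A^2 d^2 e^{-cd} + C''' A^2 d^2 e^{-2cd}$. Absorbing the polynomial factor $d^2$ into the exponential as in the conventions of the paper (at the cost of a slightly smaller exponent $c' < c$) yields $|[\phi,\phi^{*_h}]|_h \leq C e^{-c' d}$ on $D(r)$, and hence the same bound for $|F(h)|_h$ by Hitchin's equation.

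The only delicate point is the linear-in-$d$ loss from the eigenvalue bound $|D| \leq Ad$, which forces us to settle for a slightly worse exponential decay rate than the one given by Theorem \ref{GRSdecoupling}; this is harmless given the paper's convention. Otherwise the argument is purely algebraic: it leverages the fact that, in the regular semisimple case, each block of the Higgs field acts as a scalar, so that a diagonal matrix trivially commutes with its Hermitian conjugate, and the almost-orthogonality of the decomposition controls the error from the scalar model. All constants depend on $n$, $r$, and $A$, inherited from Theorem \ref{GRSdecoupling}.
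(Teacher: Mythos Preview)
Your argument is correct and follows essentially the same route as the paper's Proposition \ref{asymdecoupling} (which specializes to this corollary when $\phi_n=0$): the paper's identity $[f_s,f_s^\dagger]=0$ is your $[D,D^*]=0$, and its Lemma \ref{lem: star and dagger} bound $|f_s^{*_h}-f_s^\dagger|\leq Ce^{-cd}|f_a|$ is your bound on the off-diagonal perturbation $E$, after which both absorb the linear-in-$d$ factor into the exponential. One small slip: you cannot choose \emph{holomorphic} generators $v_i$ with $|v_i|_h\equiv 1$ (that would already force the sub-line-bundles to be flat); since the estimate is pointwise, simply take smooth unit eigenvectors, or work at a fixed point---nothing in your computation uses holomorphicity of the frame.
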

Let $S$ be a Riemann surface and $(E,\overline{\partial}_E,R\phi,h_R)$, $R>0$, be a ray of harmonic bundles on $S$, with $\phi$ generically regular semisimple. Coupled with Uhlenbeck compactness, Corollary \ref{GRSasympdecoupling} implies 

\begin{cor} [section 1.2.1 in \cite{Mo}] \label{cor:GRSlimit} As $R\to\infty,$ on the complement of the critical set, the quadruple $(E,\overline{\partial}_E,R\phi,h_R)$ sub-converges up to gauge transformations to a solution to the decoupled equations \begin{equation}\label{2}
   F(h)=0, \hspace{1mm} [\phi,\phi^{*_h}]=0.
\end{equation}
\end{cor}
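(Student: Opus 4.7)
The strategy is to reduce to a local statement by exhausting $S\setminus B$, apply Corollary~\ref{GRSasympdecoupling} after rescaling, and then invoke Uhlenbeck compactness, using Theorem~\ref{GRSdecoupling} to choose the gauges.

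First, I would fix an exhaustion $K_1 \subset K_2 \subset \cdots$ of $S\setminus B$ by compact sets. On each $K_j$, the Higgs field $\phi$ is regular semisimple with a uniform lower bound on the eigenvalue gap and a uniform upper bound on the eigenvalues. Covering $K_j$ by finitely many conformal coordinate disks and appealing to the scaling remark after Definition~\ref{dfn:S(d,A)}, we arrange that $R\phi$ pulls back on each disk to an element of $S_n(Rd_j, A_j)$ for constants $d_j, A_j > 0$ depending only on $j$. Corollary~\ref{GRSasympdecoupling} applied on each of these disks then yields the uniform exponential decay
\begin{equation*}
|F(h_R)|_{h_R} \leq C_j e^{-c_j R}, \qquad |[R\phi, (R\phi)^{*_{h_R}}]|_{h_R} \leq C_j e^{-c_j R}
\end{equation*}
on $K_j$, and by Theorem~\ref{GRSdecoupling} the holomorphic eigen-line splitting $E = \bigoplus_i L_i$ is $C_j e^{-c_j R}$-almost orthogonal for $h_R$.

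Next, I would invoke Uhlenbeck compactness locally. Since the Chern curvature $F(\nabla_{h_R})$ tends to zero in $L^\infty$ on each $K_j$, after passing to a subsequence there exist unitary-for-$h_R$ gauge transformations $u_R$ such that $u_R \cdot \nabla_{h_R}$ sub-converges weakly in $W^{1,p}_{\mathrm{loc}}$ to a limit flat unitary connection. I would choose the $u_R$ simultaneously to rotate the almost orthogonal splitting $E=\bigoplus L_i$ into honest $h_R$-orthogonality—possible at exponentially small cost by the almost orthogonality. In this gauge the Higgs field becomes block diagonal modulo $O(e^{-c_j R})$ error and the spectral data of $\phi$ are intrinsically holomorphic, which supplies the additional control needed to pass to a limit quadruple $(E_\infty, \overline{\partial}_\infty, \phi_\infty, h_\infty)$ on $K_j$. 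Passing the decoupled bounds to the limit gives $F(h_\infty) = 0$ and (after dividing the commutator bound by $R^2$) $[\phi_\infty, \phi_\infty^{*_{h_\infty}}] = 0$.

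Finally, a standard Cantor diagonal argument across the exhaustion $\{K_j\}$ upgrades the local sub-convergence to global sub-convergence on $S\setminus B$. The main obstacle, as so often in these arguments, is not the decay estimates themselves—those are essentially handed to us by Corollary~\ref{GRSasympdecoupling}—but rather ensuring that the local gauge choices assemble into a globally defined harmonic bundle. I expect this is resolved by insisting that the gauges diagonalize the intrinsic holomorphic splitting $E = \bigoplus L_i$, so that on overlaps the residual transitions lie in the (compact) bundle of unitary automorphisms preserving the splitting, hence admit convergent subsequences.
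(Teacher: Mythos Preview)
Your proposal is correct and matches the paper's own justification, which is literally the single clause ``Coupled with Uhlenbeck compactness, Corollary~\ref{GRSasympdecoupling} implies\ldots''; you have simply filled in the standard details of that implication. One minor point of comparison: when the paper carries out the analogous argument in detail for the general case (Theorem~A, Section~6), it works in \emph{holomorphic} radial frames adapted to the eigen-splitting rather than in unitary Uhlenbeck gauge, which keeps the $\overline{\partial}$-operator fixed and reduces the patching to controlling holomorphic transition functions as a normal family---but in the regular semisimple case your unitary-gauge approach with transitions in the compact torus $U(1)^n$ works equally well.
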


Mazzeo-Swoboda-Weiss-Witt in \cite{MSWW} define limiting configurations on Riemann surfaces to be solutions to the decoupled Hitchin's equations (\ref{2}) over the complement of the critical set $B$ that also satisfy certain conditions at $B$. For closed Riemann surfaces, when $E$ has rank $2$ they prove that limiting configurations are limit points of the locus of Higgs bundles with smooth spectral curve. Still in rank $2$, in \cite{Mo} Mochizuki extends the result to generically regular semisimple Higgs bundles. See \cite{Fre} and \cite{MSz} for results on this subject for higher rank bundles.

\subsection{Asymptotics of solutions to Hitchin's equations}

We will prove analogues of Corollaries \ref{GRSasympdecoupling} and \ref{cor:GRSlimit} in the case where $\phi$ is not generically regular semisimple. These results are concerned only with the complement of the critical set; we hope to pursue the analysis at the critical set in future work.

Let $(E,\overline{\partial}_E,\phi)$ be a Higgs bundle on $S$ with critical set $B$. Our first observation is that $\phi$ admits a unique Jordan-Chevalley decomposition as the sum of commuting meromorphic Higgs fields as $$\phi=\phi_s+\phi_n,$$
where $\phi_s$ is generically semisimple and $\phi_n$ is nilpotent. These Higgs fields are holomorphic on $S-B,$ but might have poles on $B.$ To study arbitrary Higgs bundles, we will apply the analysis developed by Simpson and Mochizuki to $\phi_s$, and then treat $\phi_n$ separately. 

For our generalization of Corollary \ref{cor:GRSlimit}, we want to be slightly more precise about convergence up to gauge transformations: namely, we consider only gauge transformations that preserve the decomposition of $E$ into generalized eigenspaces of $\phi$. We write $\mathcal{G}^\oplus\subset \mathcal{G}$ for this subgroup of the gauge group of $E$; it can also be characterized as those gauge transformations commuting with $\phi_s$.

\begin{thma}[Asymptotic decoupling]
Let $S$ be an arbitrary Riemann surface and  $(E,\overline{\partial}_E,\phi)$ a Higgs bundle on $S$ with critical set $B$. Let $R_i$ be a sequence in $\R^+$ tending to infinity, and let $h_i$ be any sequence of harmonic metrics on $(E,\overline{\partial}_E,R_i\phi)$. 
Passing to a subsequence, there exist $s_i \in \mathcal{G}^\oplus$ such that the triples $(s_i^*\overline{\partial}_E,  s_i^* R_i \phi_n, s_i^* h_{R_i})$ converge in $C^\infty_{\mathrm{loc}}(S - B)$ to data $(\dbar_\infty, \psi_\infty, h_\infty)$ with $\psi_\infty$ nilpotent and satisfying the following decoupled version of Hitchin's equations with respect to $\phi_s$:
\begin{equation} \label{decoupled equations}
    F(h_\infty) + [\psi_\infty,\psi_\infty^{*_{h_\infty}}]=0, \hspace{1mm} [\phi_s,\phi_s^{*_{h_{\infty}}}] =[\phi_s,\psi_\infty^{*_{h_\infty}}]=0.
\end{equation}
\end{thma}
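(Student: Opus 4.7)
The plan is to apply Mochizuki's almost-orthogonality theorem (Theorem \ref{GRSdecoupling}) directly to $R_i\phi$ and then carry out a block-by-block analysis. Since the generalized eigenspace decomposition $E=\oplus_j E_j$ of $\phi$ coincides with that of $\phi_s$, and the eigenvalue gap of $R_i\phi$ is $R_i$ times that of $\phi$, on any compact $K\Subset S-B$ we have $R_i\phi$ lying in $S_n(R_i d, A)$ locally in suitable trivializations, for constants $d,A>0$ depending on $K$. Theorem \ref{GRSdecoupling} then gives that $E=\oplus_j E_j$ is $Ce^{-cR_i}$-almost orthogonal for $h_{R_i}$ on $K$, with $C,c>0$ depending on $K$.

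Next I would construct the gauge transformations $s_i\in\mathcal{G}^\oplus$ block by block. Since $\phi_n$ commutes with $\phi_s$, it preserves each $E_j$ and $\phi|_{E_j}=\lambda_j\mathrm{id}_{E_j}+\phi_n|_{E_j}$ with $\phi_n|_{E_j}$ nilpotent and holomorphic on $S-B$. For each $j$ I would choose a gauge transformation $s_i^{(j)}$ of $E_j$ that conjugates $R_i\phi_n|_{E_j}$ to a uniformly bounded Higgs field; concretely, in a holomorphic frame adapted to the Jordan normal form of $\phi_n|_{E_j}$, conjugation by a diagonal gauge transformation with entries appropriate fractional powers of $R_i$ matched to the nilpotent block sizes does the job. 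Assemble $s_i=\oplus_j s_i^{(j)}\in\mathcal{G}^\oplus$.

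After applying $s_i^*$, Hitchin's equation for $(E,\overline{\partial}_E,R_i\phi,h_{R_i})$ expands via $\phi=\phi_s+\phi_n$; the commutators $R_i^2[\phi_s,\phi_s^*]$ and $R_i^2[\phi_s,\phi_n^*]$ together with their conjugates all vanish in the limit because they are block off-diagonal, and the off-block components of the metric decay at rate $Ce^{-cR_i}$, which absorbs the $R_i^2$ prefactor. Hence $(s_i^*\overline{\partial}_E, s_i^*R_i\phi_n, s_i^*h_{R_i})$ satisfies Hitchin's equations for the block-diagonal nilpotent Higgs field $s_i^*R_i\phi_n$ up to exponentially small error. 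Standard elliptic regularity together with Uhlenbeck compactness for harmonic bundles (applied within $\mathcal{G}^\oplus$ on each block) yield, after passing to a subsequence, $C^\infty_{\mathrm{loc}}$ convergence to a limit $(\overline{\partial}_\infty,\psi_\infty,h_\infty)$ with $F(h_\infty)+[\psi_\infty,\psi_\infty^{*_{h_\infty}}]=0$. Passing the almost-orthogonality to the limit gives genuine $h_\infty$-orthogonality of $\oplus E_j$, and since both $\phi_s$ and $\psi_\infty$ are block-diagonal in this orthogonal decomposition, the remaining equations $[\phi_s,\phi_s^{*_{h_\infty}}]=0$ and $[\phi_s,\psi_\infty^{*_{h_\infty}}]=0$ follow.

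The main obstacle will be the construction of the gauges $s_i^{(j)}$ when the Jordan type of $\phi_n|_{E_j}$ is not locally constant on $S-B$. The critical set $B$ records only where the number of generalized eigenspaces drops, not where the finer Jordan structure of $\phi_n$ jumps within a block; at such interior jump points the renormalizing gauges adapted to a locally constant Jordan type may blow up, and obtaining uniform $C^\infty_{\mathrm{loc}}(S-B)$ bounds requires carefully matching different renormalizations across these transitions. This is the key analytic input beyond Mochizuki's generically regular semisimple setting.
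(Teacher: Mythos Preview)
Your proposal has a genuine gap, and it lies precisely where you flag the ``main obstacle.'' You try to tame $R_i\phi_n$ by hand, conjugating by diagonal gauges adapted to the Jordan type of $\phi_n|_{E_j}$, with entries that are powers of $R_i$. As you observe, this breaks down wherever the Jordan type jumps inside $S-B$, and there is no reason to expect such jumps to be avoidable. More seriously, these explicit rescaling gauges are not holomorphic, so $s_i^*\overline{\partial}_E$ acquires terms you have no control over, and there is no a priori reason why $s_i^*h_{R_i}$ should stay bounded; without a uniform curvature bound you cannot invoke Uhlenbeck compactness at the end.

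The missing idea is that the harmonic metric already does the renormalization for you: this is exactly Theorem~B, which says $|R_i\phi_n|_{h_{R_i}}\le C$ on compacta of $S-B$, with $C$ independent of $R_i$. Combined with Proposition~\ref{asymdecoupling} this gives a uniform bound $|F(h_{R_i})|_{h_{R_i}}\le C$ (Corollary~\ref{localbound}), and by Corollary~\ref{cor: curvature of sub-bundles} the same holds for each sub-bundle $(E_j,h_{R_i}|_{E_j})$. One then chooses the gauges $s_i$ not from the algebra of $\phi_n$ but from the geometry of $h_{R_i}$: on each $E_j$ take a holomorphic radial frame in which the connection form and the metric matrix are uniformly bounded (Proposition~\ref{prop:E_iframe}). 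In such frames $R_i\phi_n$ is holomorphic and bounded in $h_{R_i}$-norm, hence bounded in Euclidean norm, and a normal-families plus bootstrap argument (Proposition~\ref{holocalbounds}) gives $C^\infty_{\mathrm{loc}}$ subsequential convergence of the metric, the nilpotent Higgs field, and the transition functions. Uhlenbeck's patching argument then globalizes these local gauges into a sequence in $\mathcal{G}^\oplus$. Your treatment of the semisimple cross-terms via almost-orthogonality is correct, but it is Theorem~B, not an explicit Jordan-form rescaling, that supplies the compactness.
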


Theorem A and the equation (\ref{decoupled equations}) have a spectral interpretation. We define the (non-critical) spectral cover $\hat{S}$ of $(E,\overline{\partial}_E,\phi)$ to be the Riemann surface covering $S-B$ such that a point of $\hat{S}$ is a point $p$ of $S-B$ together with a choice of generalized eigenspace of $\phi$ at $p$.  Let $\pi: \hat{S} \to S-B$ be the natural projection. There is a tautological vector bundle $\hat{E}$ on $\hat{S}$ (possibly with fibers of different dimension on different components) whose pushforward $\pi_*\hat{E}$ is naturally isomorphic to $E|_{S-B}$, and every Higgs field $\psi$ on $S-B$ commuting with $\phi_s$ (such as $\phi$ itself) is the pushforward of a corresponding Higgs field $\hat{\psi}$ on $\hat{S}$. Then $\mathcal{G}^\oplus$ is the image of the gauge group of $\hat{E}$ in the gauge group of $E|_{S-B}$.
Note that if $\phi$ is generically regular semisimple, then $\hat{E}$ is a line bundle on $\hat{S}$ and $\hat{\phi}$ is a $1$-form on $\hat{S}$.

Solving the decoupled Hitchin's equations (\ref{decoupled equations}) is equivalent to the statement that $(\dbar_\infty, \psi_\infty, h_\infty)$ comes from the pushforward by $\pi$ of a nilpotent harmonic bundle structure on $\hat{E}$.

\begin{cora}
    In the setting of Theorem A, the sequence $(\dbar_E, R_i\phi_n,h_R)$ sub-converges up to gauge to the pushforward of a nilpotent harmonic bundle structure on the the tautological bundle $\hat{E}$ over the spectral cover $\hat{S}$.
\end{cora}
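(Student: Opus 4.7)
The plan is to show that Corollary A is a structural reformulation of Theorem A in the language of the spectral cover. The limit data $(\dbar_\infty,\psi_\infty,h_\infty)$ will be shown to decompose as an orthogonal direct sum along $E|_{S-B}=\oplus_i E_i$ (the generalized eigenspace decomposition of $\phi$) into pieces satisfying ordinary Hitchin self-duality with nilpotent Higgs field. Such a family of pieces is, by definition of the tautological bundle $\hat E \to \hat S$, exactly a nilpotent harmonic bundle structure on $\hat E$ whose pushforward along $\pi:\hat S\to S-B$ recovers $(\dbar_\infty,\psi_\infty,h_\infty)$.

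The first step is to check that each of $\dbar_\infty$, $\psi_\infty$, and $h_\infty$ respects the decomposition $\oplus_i E_i$. Away from $B$ each $E_i$ is a holomorphic subbundle of $E$ of locally constant rank (since $\phi$ is holomorphic and the number of distinct eigenvalues is locally constant on $S-B$), so $\dbar_E$ preserves the decomposition; the property is preserved by gauge transformations in $\mathcal{G}^\oplus$ and by the $C^\infty_{\mathrm{loc}}$ limit, so $\dbar_\infty$ preserves it too. For $\psi_\infty$: each $s_i\in\mathcal{G}^\oplus$ commutes with $\phi_s$ and $\phi_n$ commutes with $\phi_s$, hence each $s_i^*R_i\phi_n$ commutes with $\phi_s$, and so does $\psi_\infty$; together with $\phi_s$ having distinct eigenvalues on the different $E_i$, this forces $\psi_\infty$ to preserve the decomposition. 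For $h_\infty$: the condition $[\phi_s,\phi_s^{*_{h_\infty}}]=0$ from \eqref{decoupled equations} says $\phi_s$ is normal with respect to $h_\infty$, and the standard fact that distinct eigenspaces of a normal operator are orthogonal gives $h_\infty$-orthogonality of the $E_i$.

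With all three structures block-diagonal in an $h_\infty$-orthogonal decomposition, the curvature $F(h_\infty)$, the adjoint $\psi_\infty^{*_{h_\infty}}$, and the bracket $[\psi_\infty,\psi_\infty^{*_{h_\infty}}]$ are all block-diagonal, and the first equation of \eqref{decoupled equations} restricts on each $E_i$ to a copy of Hitchin's self-duality equation with nilpotent Higgs field $\psi_\infty|_{E_i}$. (The remaining condition $[\phi_s,\psi_\infty^{*_{h_\infty}}]=0$ is then automatic, since $\psi_\infty^{*_{h_\infty}}$ is block-diagonal by orthogonality.) Assembling these restrictions across the sheets of $\hat S$ gives, by definition, a nilpotent harmonic bundle structure on $\hat E$ whose pushforward under $\pi$ is the limit data, which is what Corollary A asserts. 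The main obstacle is essentially organizational: the statement is close to automatic once Theorem A is in hand, and the only genuine content lies in matching each clause of the decoupled equations with the corresponding feature of the spectral cover — orthogonality of the decomposition from the $\phi_s$-normality condition, and the Hitchin equation on each summand from the $F+[\psi,\psi^*]$ condition.
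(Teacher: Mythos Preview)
Your argument is correct and matches the paper's treatment: the paper remarks (just before Corollary A and again in the proof of Theorem A) that the two auxiliary equations $[\phi_s,\phi_s^{*_{h_\infty}}]=0$ and $[\phi_s,\psi_\infty^{*_{h_\infty}}]=0$ ``merely express that both $h_\infty$ and $\psi_\infty$ are descended from their hatted versions on the spectral curve,'' which is exactly the block-diagonality you spell out. The only minor difference is that the paper's proof of Theorem A (Proposition~\ref{prop: bundle convergence}) actually constructs the limit directly on $\hat S$ via frames of $\hat E$ and obtains block-diagonality of $H_\infty$ from asymptotic orthogonality, so Corollary A is already built into that construction rather than deduced afterward from the decoupled equations as you do; both routes are valid and amount to the same content.
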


The proof of Theorem A will use the estimates from Theorems B and C below on local asymptotics for solutions to Hitchin's equations. Together they give a uniform curvature bound that allows us to apply an Uhlenbeck compactness argument in the proof of Theorem A. We also apply Theorems B and C to the analysis of high energy harmonic maps.

Theorem B is the main analytic input; applying a method of Simpson and Mochizuki, we give an a priori bound for the norm of the nilpotent part of $\phi$. We remark that in the special case that $\phi$ is already nilpotent, it reduces to an estimate of Simpson (see \cite[section 2]{S} and also Proposition \ref{firstbound}).
\begin{thmb}\label{mainestimate} Let $\phi \in S_n(d,A)$ and let $h$ be a harmonic metric for $\phi$. On any subdisk $D(r)\subset D(1)$, there exists $C=C(n,r,A)>0$ such that
$$|\phi_n|_h\leq C.$$
\end{thmb}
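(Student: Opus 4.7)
The strategy is to reduce to Simpson's a priori bound for harmonic bundles with nilpotent Higgs field (Proposition \ref{firstbound}) by using Mochizuki's decoupling estimate (Theorem \ref{GRSdecoupling}) to work eigenspace-by-eigenspace.

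First, I would fix an intermediate radius $r < r' < 1$ and apply Theorem \ref{GRSdecoupling} on $D(r')$: the generalized eigenspace decomposition $E = \bigoplus_{i=1}^m E_i$ is $\epsilon$-almost orthogonal for $h$ with $\epsilon = Ce^{-cd}$, for constants depending on $n,r',A$. Since $\phi_n$ commutes with $\phi_s$, it preserves each $E_i$ and its restriction $\phi_n|_{E_i}$ is nilpotent. On $E_i$ I may write $\phi|_{E_i} = \alpha_i \cdot \mathrm{Id}_{E_i} + \phi_n|_{E_i}$, where $\alpha_i$ is a holomorphic eigenvalue of magnitude at most $Ad$; crucially, $\alpha_i \cdot \mathrm{Id}_{E_i}$ is central, so it contributes nothing to $[\phi,\phi^{*_h}]$, and its potentially large size never enters the estimates directly.

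Next, I would replace $h$ by the nearby metric $h'$ which preserves each $h|_{E_i}$ but forces the decomposition to be exactly orthogonal. Almost orthogonality yields $h' = h(1 + O(\epsilon))$ pointwise, so it suffices to bound $|\phi_n|_{h'}$. With respect to $h'$, each triple $(E_i, \phi_n|_{E_i}, h'|_{E_i})$ is an approximate harmonic bundle with nilpotent Higgs field: the Chern connection and curvature of $h'|_{E_i}$ differ from the block-diagonal restrictions of those of $h$ by terms of size $O(\epsilon)$, and by centrality of $\alpha_i$ the global Hitchin equation $F(h)+[\phi,\phi^{*_h}]=0$ projects to $E_i$ as $F(h'|_{E_i}) + [\phi_n|_{E_i}, \phi_n|_{E_i}^{*_{h'}}] = O(e^{-cd})$.

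The final step is to run Simpson's argument for nilpotent Higgs fields on each $(E_i, \phi_n|_{E_i}, h'|_{E_i})$. Simpson's proof derives a subharmonic-type differential inequality for $\log|\phi_n|_{h'}^2$ from Hitchin's equation together with nilpotency, and concludes by a maximum-principle or Moser iteration argument. Repeating this computation for our near-harmonic bundle introduces an extra error term of size $e^{-cd}$ which is absorbable into the leading terms. This yields $|\phi_n|_{E_i}|_{h'} \leq C(n,r,A)$ on $D(r)$; summing over $i$ and comparing $h$ with $h'$ gives the theorem.

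\textbf{Main obstacle.} The delicate point is quantifying the orthogonalization error: I must show that passing from $h$ to $h'$ perturbs the Chern connection and curvature by $O(e^{-cd})$, and that this exponentially small error survives the nonlinear Bochner computation in Simpson's argument. It is essential to exploit the centrality of $\alpha_i \cdot \mathrm{Id}_{E_i}$ throughout, since the eigenvalues $\alpha_i$ may grow as large as $Ad$ and would otherwise overwhelm the estimate. Producing constants independent of $d$ is precisely what makes the theorem nontrivial and is where the "method of Simpson and Mochizuki" must be adapted rather than applied as a black box.
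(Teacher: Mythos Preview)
Your plan has a genuine gap at exactly the point you flag as the ``main obstacle,'' and the obstacle is worse than you suggest. Theorem \ref{GRSdecoupling} gives only $C^0$ almost-orthogonality: it bounds $|\pi_i-\pi_i'|_h$ pointwise. To pass from $h$ to the block-orthogonal metric $h'$ and control the Chern connection you need $C^1$ control on the perturbation, and to control the curvature you need $C^2$ control; equivalently you need the second fundamental forms $B_i$ to be small. In the paper these derivative estimates are Proposition \ref{dels} and Theorem C (via Corollary \ref{cor: curvature of sub-bundles}), and both rely on the uniform curvature bound Corollary \ref{localbound}, which in turn \emph{uses} Theorem B. So your proposed route is circular: you are invoking consequences of the theorem you are trying to prove. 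The claim that ``$F(h'|_{E_i}) + [\phi_n|_{E_i},\phi_n|_{E_i}^{*_{h'}}] = O(e^{-cd})$'' is precisely the content of Proposition \ref{asymdecoupling} plus Corollary \ref{cor: curvature of sub-bundles}, and the latter is downstream of Theorem B.

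The paper sidesteps this entirely by never splitting into eigenspaces or changing the metric. Instead it applies Simpson's inequality (Lemma \ref{lem: origsim}) directly with $s=f_n$ on the full bundle $E$, using only the commutation $[f_n,f]=0$ from the Jordan--Chevalley decomposition. This yields
\[
\partial_z\partial_{\bar z}\log|f_n|_h^2 \;\ge\; \frac{|[f_n,f^{*_h}]|_h^2}{|f_n|_h^2}
\;\ge\; \frac{|[f_n,f_n^{*_h}]|_h^2}{2|f_n|_h^2} - \frac{|[f_n,f_s^{*_h}]|_h^2}{|f_n|_h^2}.
\]
The first term is bounded below by $C_0^{-2}|f_n|_h^2$ via Proposition \ref{SMprop}, and the second is $O(e^{-2cd})$ by the purely $C^0$ estimate $|[f_n,f_s^{*_h}]|_h\le Ce^{-cd}|f_n|_h$ (equation \eqref{fn and fss}, which uses only Lemma \ref{lem: star and dagger} and Theorem \ref{GRSdecoupling}). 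A single maximum-principle comparison (Proposition \ref{maxprinc}) then finishes. The key point you missed is that Simpson's differential inequality applies to any holomorphic section commuting with $f$, so one can feed in $f_n$ globally without ever orthogonalizing.
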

The point of this theorem is that $|\phi_n|_h$ is bounded independently of $d,$ and hence the bound is stable along rays of harmonic bundles $(E,\overline{\partial}_E,R\phi,h_R).$ 

Theorem C is a consequence of Theorem B together with the argument of \cite[Proposition 2.11]{Mo}. Keeping with the objects of Theorem B, let $E=\oplus_{i=1}^m E_i$ be the generalized eigenspace decomposition of $E$ with respect to $\phi$. We let $B_i$ be the second fundamental form of $E_i$ in $E$, meaning $B_i(s) = (\nabla s)^\perp$ for $s$ a section of $E_i$, where the perpendicular is taken with respect to $h$. We say that the sub-bundle $E_i$ is $\epsilon$-almost parallel with respect to $h$ if $|B_i|_h \leq \epsilon$.

\begin{thmc}
 Let $\phi \in S_n(d,A)$ and let $h$ be a harmonic metric for $\phi$. 
    On any subdisk $D(r)\subset D(1)$, there exist constants $C=C(n,r,A), c=c(n,r,A)>0$ such that each sub-bundle $E_i$ is $Ce^{-cd}$-almost parallel for $h$.
\end{thmc}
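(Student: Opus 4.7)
The plan is to combine Theorem B with the argument of Mochizuki's Proposition 2.11 in \cite{Mo}. The key objects I would work with are the algebraic projections $\pi_i : E \to E_i$ onto the generalized eigenspaces of $\phi$. Since $\phi \in S_n(d,A)$ has empty critical set, these $\pi_i$ are smooth holomorphic endomorphisms, so $\dbar_E \pi_i = 0$. Because $\phi_s$ acts as a scalar $\alpha_i$ on $E_i$ and $\phi_n$ preserves each $E_i$ (it commutes with $\phi_s$), both commute with $\pi_i$, and therefore $[\phi, \pi_i] = 0$. With $P_i$ denoting the $h$-orthogonal projection onto $E_i$, the second fundamental form $B_i$ agrees with the off-diagonal part of $\partial_h \pi_i$ (which equals $\nabla_h \pi_i$ since $\dbar_E \pi_i = 0$) up to an error of size $|\pi_i - P_i|_h \cdot |\partial_h \pi_i|_h$; by Theorem \ref{GRSdecoupling} this discrepancy is $Ce^{-cd} \cdot |\partial_h \pi_i|_h$. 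So it suffices to prove $|\partial_h \pi_i|_h \leq Ce^{-cd}$ on $D(r)$.

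From the curvature identity $\dbar_E \partial_h \pi_i = [F(h), \pi_i]$ (valid since $\dbar_E \pi_i = 0$) together with Hitchin's equation $F(h) = -[\phi,\phi^{*_h}]$, the Jacobi identity and $[\phi,\pi_i]=0$ yield
\[
\dbar_E \partial_h \pi_i = -[\phi, [\phi^{*_h}, \pi_i]].
\]
The essential algebraic feature is that $\mathrm{ad}(\phi_s)$ acts on the block $\mathrm{Hom}(E_j, E_i) \subset \mathrm{End}(E)$ by multiplication by $\alpha_i - \alpha_j$, of modulus at least $d$. Following Mochizuki, I would next derive a Bochner-type inequality
\[
\Delta |\partial_h \pi_i|_h^2 \geq c d^2 |\partial_h \pi_i|_h^2 - C e^{-cd},
\]
where the leading term comes from $\mathrm{ad}(\phi_s)^*\mathrm{ad}(\phi_s)$ on off-diagonal blocks and the small forcing term from $|[\phi_s^{*_h}, \pi_i]|_h \leq Ce^{-cd}$. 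The latter holds because $[\phi_s^{*_h}, \pi_i]$ is, by direct computation, a sum of off-diagonal matrix elements of $\phi_s^{*_h}$ with respect to the decomposition $E = \oplus E_i$, each controlled by Theorem \ref{GRSdecoupling}. A standard cut-off and interior mean-value estimate on subdisks $D(r) \subset D(1)$ then produces the desired bound on $|\partial_h \pi_i|_h$.

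The main obstacle, and the reason Theorem B is needed, is the handling of the nilpotent part. In Mochizuki's regular semisimple case, $\phi_n = 0$ and the nilpotent contributions vanish. When $\phi_n$ is nonzero, the commutator terms $[\phi_n, [\phi^{*_h}, \pi_i]]$ and contributions from $\phi_n^{*_h}$ could a priori scale with $d$ along a ray $R\phi$ and overwhelm the gap $cd^2$ in the Bochner inequality. Theorem B provides exactly the a priori bound $|\phi_n|_h \leq C$, uniform in $d$, that is needed to keep these error terms $O(1)$ rather than $O(d)$, so that the leading $cd^2|\partial_h \pi_i|_h^2$ survives. Once this is carefully checked, Mochizuki's interior mean-value argument from \cite{Mo} closes the proof.
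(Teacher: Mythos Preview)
Your setup is right: the goal is to bound $|\partial_h\pi_i|_h$, the curvature identity $\overline{\partial}\partial_h\pi_i = -[\phi,[\phi^{*_h},\pi_i]]$ is the correct starting point, and you correctly observe that $|[\phi^{*_h},\pi_i]|_h\le Ce^{-cd}$ via almost orthogonality (indeed $[\phi^{*_h},\pi_i]=[\phi^{*_h},\pi_i-\pi_i^{*_h}]$ since $[\phi,\pi_i]=0$). But from here the paper does \emph{not} set up a Bochner inequality on $|\partial_h\pi_i|_h^2$. Your claimed inequality $\Delta|\partial_h\pi_i|^2\ge cd^2|\partial_h\pi_i|^2-Ce^{-cd}$ has no evident derivation: the equation you have governs $\overline{\partial}\partial_h\pi_i$, not second derivatives of $\partial_h\pi_i$, and there is no mechanism producing the $cd^2|\partial_h\pi_i|^2$ term. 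The $\mathrm{ad}(\phi_s)^*\mathrm{ad}(\phi_s)$ gap acts on $\pi_i$-type quantities, not on $\partial_h\pi_i$, so the leading term you want does not appear.

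The paper's route is more direct and avoids this entirely. From the curvature identity and $|\phi|_h\le Cd$ (Proposition~\ref{firstbound}) one gets $|\overline{\partial}\partial_h\pi_i|_h\le Ce^{-cd}$ outright. The remaining issue is that a naive elliptic estimate on $\pi_i$ only yields $|\pi_i|_{W^{2,p}}\le C$, since $|\pi_i|_h\sim 1$. The key trick you are missing is to apply the elliptic estimate to $s=\pi_i-\pi_i^{*_h}$ instead: one has $\partial_h\pi_i^{*_h}=(\overline{\partial}\pi_i)^{*_h}=0$, so $\overline{\partial}\partial_h s=\overline{\partial}\partial_h\pi_i$ is exponentially small, while $|s|_h$ itself is exponentially small by Theorem~\ref{GRSdecoupling}. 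Then $|s|_{W^{2,p}}\le C(|\overline{\partial}\partial_h s|_{L^p}+|s|_{L^p})\le Ce^{-cd}$, and Morrey gives $|\partial_h\pi_i|_{C^{0,\alpha}}=|\partial_h s|_{C^{0,\alpha}}\le Ce^{-cd}$. Finally, Theorem~B enters not to control nilpotent commutators in a Bochner inequality, but to give the uniform curvature bound $|F(h)|_h\le C$ (Corollary~\ref{localbound}), which is what makes the elliptic constants in Lemma~\ref{lem: schauder estimate} independent of $d$.
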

When $\phi$ is generically regular semisimple, Theorem C follows from \cite[Proposition 2.11]{Mo}. Our work removes the generically regular semisimple assumption. 

Using Theorem C, we can write an asymptotic expression for the flat connection. This will be used in the proof of Theorem D. Let $D_h = \nabla_h + \phi+\phi^{*_h}$ be the flat connection associated with our harmonic bundle and let $\phi_i$ be the eigen-$1$-forms of $\phi$, i.e. $\phi_s=\sum_{i=1}^m \phi_i\pi_i.$ Set $h_i$ to be the restriction of $h$ to $E_i$ and $h^{\oplus}=\oplus_{i=1}^m h_i.$ Note that $h^{\oplus}$ is the pushforward of the lift of $h$ to the spectral cover. We denote the Chern connection of $h^{\oplus}$ by $\nabla_h^{\oplus}.$
\begin{corc} In the setting of Theorem C, for some $\textrm{End}(E)$-valued $1$-form $a$ satisfying $|a|_{h}\leq Ce^{-cd},$
$$D_h = \nabla^\oplus_h +  \sum_{i=1}^m (\phi_i+\overline{\phi}_i)\pi_i+ \phi_n +\phi_n^{*_{h^\oplus}}+a.$$
\end{corc}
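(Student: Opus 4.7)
The plan is to expand the flat connection $D_h = \nabla_h + \phi + \phi^{*_h}$, substitute $\phi = \phi_s + \phi_n = \sum_i \phi_i \pi_i + \phi_n$, and subtract the desired main term $\nabla_h^\oplus + \sum_i (\phi_i + \overline{\phi_i})\pi_i + \phi_n + \phi_n^{*_{h^\oplus}}$. This identifies the error 1-form as
\begin{equation*}
a = (\nabla_h - \nabla_h^\oplus) + \sum_i \overline{\phi_i}(\pi_i^{*_h} - \pi_i) + (\phi_n^{*_h} - \phi_n^{*_{h^\oplus}}).
\end{equation*}
It suffices to bound each of the three summands in $h$-norm by $Ce^{-cd}$, drawing on Theorem B, Theorem C, and Theorem \ref{GRSdecoupling}.

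For the first summand, note that $\nabla_h - \nabla_h^\oplus$ is tensorial (being a difference of connections), and on a section $s$ of $E_j$ it equals $(I - \pi_j^h)\nabla_h s = B_j(s)$, since the Chern connection $\nabla_h^\oplus|_{E_j}$ of the holomorphic subbundle $(E_j, \overline{\partial}_j, h|_{E_j})$ is given by $\pi_j^h \nabla_h|_{E_j}$, where $\pi_j^h$ denotes $h$-orthogonal projection onto $E_j$. Theorem C directly gives $|\nabla_h - \nabla_h^\oplus|_h \leq Ce^{-cd}$. For the second summand, Theorem \ref{GRSdecoupling} yields $|\pi_i^{*_h} - \pi_i|_h \leq Ce^{-cd}$, since the $h$-adjoint $\pi_i^{*_h}$ has image $(\oplus_{j \neq i} E_j)^{\perp_h}$ and kernel $E_i^{\perp_h}$, both close to $E_i$ and $\oplus_{j \neq i} E_j$ respectively when the splitting is almost $h$-orthogonal. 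Combined with the eigenvalue bound $|\phi_i| \leq Ad$ coming from $\phi \in S_n(d,A)$, and absorbing the linear factor into the exponential per the paper's convention, the second summand is $\leq Ce^{-cd}$.

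For the third summand, write $h^\oplus = h \cdot G$ for a positive $h$-self-adjoint endomorphism $G$ (so $h^\oplus(s,t) = h(s,Gt)$). Since $h^\oplus = h$ on each $E_i$ and the off-diagonal discrepancies between $h$ and $h^\oplus$ measure precisely the failure of $h$-orthogonality of the splitting, Theorem \ref{GRSdecoupling} gives $|G - I|_h \leq Ce^{-cd}$. The change-of-metric identity $\phi_n^{*_{h^\oplus}} = G^{-1} \phi_n^{*_h} G$ then yields
\begin{equation*}
\phi_n^{*_h} - \phi_n^{*_{h^\oplus}} = -G^{-1}[\phi_n^{*_h}, G - I],
\end{equation*}
which is bounded by $C |\phi_n^{*_h}|_h \cdot |G - I|_h$. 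This third summand is the main technical step: it is essential here that Theorem B furnishes the $d$-independent bound $|\phi_n|_h \leq C$, since without this, $\phi_n^{*_h}$ could grow with $d$ and destroy the exponential decay. The first two summands, by contrast, are essentially direct consequences of Theorem C and the almost-orthogonality provided by Theorem \ref{GRSdecoupling}. Summing the three bounds completes the proof.
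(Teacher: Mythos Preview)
Your proof is correct and follows essentially the same decomposition as the paper: both split the error into the connection difference $\nabla_h - \nabla_h^\oplus$, the $\phi_s^{*_h}$ discrepancy $\sum_i \overline{\phi_i}(\pi_i^{*_h} - \pi_i)$, and the $\phi_n$-adjoint discrepancy $\phi_n^{*_h} - \phi_n^{*_{h^\oplus}}$, and both use Theorem~\ref{GRSdecoupling} for the second, and Theorem~B together with almost-orthogonality for the third. The only minor difference is that for the connection term the paper invokes Proposition~\ref{dels} (the estimate $|\partial_h \pi_i|_h \leq Ce^{-cd}$) directly, whereas you appeal to its consequence Theorem~C via the second fundamental form; your change-of-metric computation $\phi_n^{*_{h^\oplus}} = G^{-1}\phi_n^{*_h}G$ with $G = \sum_i \pi_i^{*_h}\pi_i$ is also a bit more explicit than the paper's matrix argument, but the content is the same.
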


\subsection{The Hitchin WKB problem}\label{HWKBsection}
We can apply our local analysis to the Hitchin WKB problem. Let $S$ be a Riemann surface and $(E,\overline{\partial}_E,R\phi, h_R)$, $R>0$, a ray of harmonic bundles on $S$. The Hitchin WKB problem posed in \cite{GMN} and \cite{KNPS} asks about the monodromy of the flat connection $D_{h_R}$ as we take $R\to \infty.$ 

It is formulated as follows. Let $V$ be a complex vector space of dimension $n$, and let $h_1,h_2$ be unit volume Hermitian metrics on $V.$ Simultaneously diagonalizing $h_1$ and $h_2,$ we obtain a basis $\{e_1,\dots, e_r\}$ of $V$ that is orthogonal for both metrics. Define $\kappa_i = \log |e_i|_{h_1}-\log|e_i|_{h_2},$ and reorder our bases so that $\kappa_1\geq \kappa_2\geq \dots \geq \kappa_r.$ The vector distance between $h_1$ and $h_2$ is defined by $$\vec{d}(h_1,h_2)=(\kappa_1,\dots, \kappa_r).$$ 
$\vec{d}(h_1,h_2)$ is also the Weyl chamber-valued distance between the points $h_1$ and $h_2$ in the symmetric space of $\textrm{SL}(V).$

Let $\gamma$ be a path in the complement of the critical set, and let $\phi_1, \ldots, \phi_n$ be an ordering of the eigen-1-forms of $\phi_s$ along $\gamma$, which might not be distinct. We assume that $\gamma$ is non-critical, which means that unless $\phi_i = \phi_j$ on all of $\gamma$, $\mathrm{Re}(\phi_i(\dot{\gamma}))$ is never equal to $\mathrm{Re}(\phi_j(\dot{\gamma}))$. We set $\alpha_i=-\int_\gamma \textrm{Re}(\phi_i),$ and reorder if necessary so that $\alpha_1\geq\alpha_2\geq \dots \geq \alpha_n.$

For each $R>0,$  let $\Pi_{R,\gamma}: E_{\gamma(0)}\to E_{\gamma(1)}$ be the parallel transport operator induced by the flat connection $D_{h_R}$. The following is conjectured in \cite{KNPS} for $\phi$ generically regular semisimple and proved in \cite[Theorem 1.5]{Mo} (for $\phi$ generically regular semisimple). Theorem $D$ removes hypotheses on $\phi$, with a slightly weaker conclusion in the case that $\phi$ is not semisimple.

\begin{thmd} Let $(E, \dbar_E, R\phi, h_R)$ be a family of harmonic bundles on a Riemann surface $S$, and let $\gamma$ be a non-critical path. Then
   $$|\frac{1}{R}\vec{d}((h_R)_{\gamma(0)},\Pi_{R,\gamma}^*(h_R)_{\gamma(1)})-2(\alpha_1,\dots, \alpha_n)|\leq CR^{-1},$$
   for some $C$ depending on $S$, $\gamma$, $n$, and the eigen-1-forms of $\phi$. If $\phi$ is generically semisimple, then the bound can be improved to $C e^{-cR}$ for some $c$ with the same dependence.
\end{thmd}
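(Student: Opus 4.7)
The plan is to apply Corollary C to compare the holonomy of $D_{h_R}$ along $\gamma$ to that of an explicit \emph{main connection} whose parallel transport decomposes cleanly with respect to the eigenspace splitting, and then to bound the correction using the uniform estimates in Theorems B and C. Covering a tubular neighborhood of $\gamma$ by finitely many disks on which Corollary C applies with uniform constants, one writes
\[
D_{h_R}= D^{\mathrm{main}}_R + R\phi_n + R\phi_n^{*_{h_R^\oplus}}+a_R,\quad D^{\mathrm{main}}_R := \nabla^\oplus_{h_R}+\sum_{i=1}^m R(\phi_i+\overline{\phi}_i)\pi_i,
\]
along $\gamma$, with $|a_R|_{h_R}\leq Ce^{-cR}$; crucially, Theorem B furnishes the $R$-independent bounds $|R\phi_n|_{h_R},|R\phi_n^{*_{h_R^\oplus}}|_{h_R}\leq C$. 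Now $D^{\mathrm{main}}_R$ preserves the splitting $E=\oplus E_i$, and on each $E_i$ equals the Chern connection of $h_R|_{E_i}$ twisted by the real scalar $1$-form $R(\phi_i+\overline{\phi}_i)$. Integrating $\tfrac{d}{dt}|v|^2_{h_R}$ for a $D^{\mathrm{main}}_R$-parallel section of $E_i$ shows that $\Pi^{\mathrm{main}}_{R,\gamma}|_{E_i}$ is an $h_R$-unitary composed with scalar multiplication by $e^{2R\alpha_i}$. Hence, in an $h_R|_{\gamma(0)}$-orthonormal basis of $E_{\gamma(0)}$ adapted to the splitting, $\vec d(h_R|_{\gamma(0)},(\Pi^{\mathrm{main}}_{R,\gamma})^*h_R|_{\gamma(1)})=2R(\alpha_1,\ldots,\alpha_n)$, which is the claimed leading term.

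To compare the true parallel transport with the main one, write $\Pi_{R,\gamma}=\Pi^{\mathrm{main}}_{R,\gamma}\cdot T$; then $T$ satisfies $\dot T = -\bigl[(\Pi^{\mathrm{main}}_{R,t})^{-1}(R\phi_n+R\phi_n^{*_{h_R^\oplus}}+a_R)(\dot\gamma)\Pi^{\mathrm{main}}_{R,t}\bigr]T$. Since $R\phi_n+R\phi_n^{*_{h_R^\oplus}}$ preserves the splitting, conjugation by the block-diagonal $\Pi^{\mathrm{main}}_{R,t}$ (in which the scalars $e^{2R\alpha_i(t)}$ cancel block by block) leaves its $h_R$-norm essentially unchanged, so this contribution to $F$ is uniformly bounded by Theorem B. The off-diagonal contribution of $a_R$ picks up factors $e^{2R(\alpha_j(t)-\alpha_i(t))}$, but after subdividing $\gamma$ into sufficiently short subpaths the decay $|a_R|\leq Ce^{-cR}$ dominates and the conjugated remainder stays bounded (in fact exponentially small). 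Gronwall then yields $\|T\|,\|T^{-1}\|\leq C$ uniformly in $R$, improved to $\|T-\mathrm{Id}\|\leq Ce^{-c'R}$ when $\phi_n=0$. The multiplicative Weyl inequalities $|\log\sigma_i(\Pi_{R,\gamma})-\log\sigma_i(\Pi^{\mathrm{main}}_{R,\gamma})|\leq\log\max(\|T\|,\|T^{-1}\|)$ translate these bounds on $T$ into $\vec d=2R(\alpha_1,\ldots,\alpha_n)+O(1)$ in general and $\vec d=2R(\alpha_1,\ldots,\alpha_n)+O(e^{-c'R})$ in the generically semisimple case; dividing by $R$ gives the claim. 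Almost-orthogonality of the splitting from Theorem \ref{GRSdecoupling} handles the error in using the adapted basis and contributes only another $O(e^{-cR})$ term.

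The main technical obstacle is the conjugation of $a_R$ by $\Pi^{\mathrm{main}}_R$: the wildly anisotropic stretching $e^{2R\alpha_i}$ across blocks could a priori amplify $a_R$ faster than Corollary C's decay can absorb it. The remedy is to subdivide $\gamma$ into pieces on which the variation of $\alpha_i-\alpha_j$ is small compared to $c$, and to iterate the estimate; the non-critical hypothesis on $\gamma$ guarantees that the ordering of $\mathrm{Re}(\phi_i(\dot\gamma))$ is globally consistent, justifying the use of a basis adapted to the splitting throughout.
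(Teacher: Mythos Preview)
Your approach shares the paper's starting point (Corollary C and Theorem B) and its endgame (reducing to singular-value estimates for the parallel transport), but the middle step---controlling the correction $T$ by Gronwall plus subdivision---has a genuine gap, and it is exactly the obstacle you name at the end.

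Conjugating the off-block error $a_R$ by $\Pi^{\mathrm{main}}_{R,t}$ amplifies the $(i,j)$-block by $e^{2R(\alpha_j(t)-\alpha_i(t))}$, and this exponent can exceed the decay rate $c$ from Corollary~C. Subdividing $\gamma$ into short arcs does make each local $T_k$ bounded (even $\mathrm{Id}+O(e^{-c'R})$ off the block diagonal), but when you concatenate
\[
\Pi_{R,\gamma}=\Pi^{\mathrm{main}}_{R,\gamma_N}T_N\cdots\Pi^{\mathrm{main}}_{R,\gamma_1}T_1
\]
and rewrite this as $\Pi^{\mathrm{main}}_{R,\gamma}\cdot S$, each $T_k$ gets conjugated by the \emph{full} $\Pi^{\mathrm{main}}_{R,[0,t_{k-1}]}$. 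That conjugation reinstates the factor $e^{2R(\alpha_j(t_{k-1})-\alpha_i(t_{k-1}))}$ against the small off-block part of $T_k$, and since $t_{k-1}$ ranges up to $1$ the product blows up unless $c$ happens to dominate $2\max_{i,j}|\alpha_i(1)-\alpha_j(1)|$, which you have no reason to expect. Trying to work piece-by-piece on the singular values instead does not help either: the inequalities $\log\|\wedge^k\Pi_{R,\gamma}\|_{\mathrm{op}}\leq\sum_\ell\log\|\wedge^k\Pi_{R,\gamma_\ell}\|_{\mathrm{op}}$ go only one way, so you get at best upper bounds on the $\beta_k^R$, not the matching lower bounds.

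The paper closes this gap with a different tool: Mochizuki's diagonalization lemma (\cite[Corollary 2.19]{Mo}, recorded here as Lemma~\ref{cor2.19}). After choosing on each $E_i$ an $h_R^i$-orthonormal frame that is $\gamma^*\nabla_i$-parallel, the pulled-back connection form on $[0,1]$ reads $RA(s)+B_0(s)+B_1(s)$ with $A$ diagonal (entries $2\,\mathrm{Re}\,\phi_i(\dot\gamma)$), $B_0$ diagonal and uniformly bounded by Theorem~B, and $B_1$ off-diagonal. The lemma produces a gauge $I+G$ with $G$ off-diagonal and $\|G\|,\|H\|\leq C_1\|B_1\|$ such that in the new frame the connection is purely diagonal, $RA+B_0+H$. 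From a diagonal connection the parallel transport---and hence each $\|\wedge^k\Pi_{R,\gamma}\|_{\mathrm{op}}$---is computed explicitly, giving both inequalities at once. This global diagonalization is a Levinson-type ODE fact; it is where the non-critical hypothesis $\mathrm{Re}(a_1)\leq\cdots\leq\mathrm{Re}(a_n)$ along all of $\gamma$ is genuinely used, not merely to keep the block decomposition consistent.
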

Using Theorem B and Corollary C, one can prove Theorem D by following Mochizuki's proof of Theorem 1.5 in \cite{Mo} almost verbatim. For this reason, we've put the proof in Appendix A.

In \cite{LTW}, for cyclic $\textrm{SL}(3,\R)$-Higgs bundles in the Hitchin component, the authors study paths that go through the critical set. In the general case, there is an expectation that critical paths can be understood via spectral networks (see \cite[section 3.4]{KNPS}).

\subsection{Application to harmonic maps}
The last of our main results concerns high energy harmonic maps to symmetric spaces of non-compact type. Our results so far suggest that, for $\phi \in S_n(d,A)$ and $d$ large, every solution to the self-duality equations gives a harmonic map to the $\textrm{SL}(n,\C)$-symmetric space that is approximated well in a certain sense by the harmonic functions obtained by integrating the real parts of the eigen-$1$-forms of $\phi$. This suggestion is made precise and generalized by Theorem E. The generalization is two-fold: we work with harmonic maps to arbitrary symmetric spaces of non-compact type, and instead of working locally on the disk, we construct a comparison to a global object, namely a harmonic map from a small cameral cover to a split toral subalgebra. 

In order to state Theorem E, we must first introduce $G$-Higgs bundles and the Hitchin base. Fix a semisimple Lie group $G$ with no compact factors, and maximal compact subgroup $K$, as well as a $G$-invariant Riemannian metric $\nu$ on $G/K,$ so that $(G/K,\nu)$ is a symmetric space of non-compact type. Let $\mathfrak{g}$ and $\mathfrak{k}$ be the Lie algebras of $G$ and $K$ respectively, and let $\mathfrak{p} \subset \mathfrak{g}$ be the orthogonal complement to $\mathfrak{k}$. 

A $G$-Higgs bundle is a holomorphic principal-$K^{\C}$ bundle $P$ together with a holomorphic $\mathrm{ad}\p^{\C} := P\times_{\textrm{Ad}|_{K^{\C}}} \p^{\C}$-valued $1$-form $\phi$, called the Higgs field. A reduction $h$ of the structure group of $P$ from $K^{\C}$ to $K$ determines a real structure $*_h$ on $\textrm{ad}\p^{\C}$ as well as a Chern connection $A_h$ on $P$, and hence Hitchin's equations \eqref{1} make sense as an identity of two-forms valued in the adjoint bundle of $P$. A solution to Hitchin's equations for a $G$-Higgs bundle $(P,\phi)$ on $S$ is the same, up to equivalence, as an equivariant harmonic map from $\tilde{S}$ to $(G/K,\nu).$ The data $(P,\phi,*_h)$ is called a harmonic $G$-bundle.

Fix additionally a maximal split toral subalgebra $\ft\subset \g$ with (restricted) Weyl group $W$. We write $\mathcal{K}$ for (the total space of) the canonical bundle of the surface $S$, and $\mathcal{K} \otimes \ft^{\C}$ for the bundle of $\ft^{\C}$-valued one forms. We write $\mathcal{K} \otimes \ft^{\C} // W$ for the invariant-theoretic quotient of $\mathcal{K} \otimes \ft^{\C}$ by the action of $W$ on $\ft^{\C}$, which is equivalently a quotient of $\mathcal{K} \times \ft^{\C}$ by $\C^* \times W$. It is a fiber bundle over $S$, whose fibers are noncanonically isomorphic to the cone $\ft^{\C}//W$. We refer to a holomorphic section $\eta$ of $\mathcal{K} \otimes \ft^{\C} // W$ as a $\ft^{\C} // W$-valued $1$-form on $S$. A $\ft^{\C} // W$-valued $1$-form is equivalent to a point in the well-known Hitchin base (see section \ref{HBsection}).

A $\ft^{\C} // W$-valued $1$-form is also equivalent to a cameral covering in the sense of \cite[section 2]{Don}. In one direction, a $\ft^{\C}//W$-valued $1$-form $\eta$ determines by pullback a scheme $\mathrm{Cam}$ with a $W$ action covering $S$, together with a $W$-equivariant $\ft^{\C}$-valued $1$-form $\eta_{\mathrm{Cam}}$ on $\mathrm{Cam}$. The map in the other direction is just the $W$ quotient.

In section \ref{sec: cameral covers} we define the notion of a small cameral cover, which is a connected Riemann surface with a $\ft^{\C}$-valued $1$-form, which is a Galois covering of $S-B$ with Deck group a subquotient of $W$. If $G = \mathrm{SL}(n,\C)$, a point in a small cameral cover lying over a point $p \in S-B$ can be thought of as an ordering of the generalized eigenspaces of the Higgs field at $p$ (see \cite[section 3.2]{SS}). Fixing a small cameral cover $C$ with $\ft_{\C}$-valued $1$-form $\eta_C$ and Deck group $W_C$, integrating $\mathrm{Re}(\eta_C)$ gives a harmonic map $f_\eta$ from the universal cover $\tilde{C}$ of $C$ to $\ft$ that is equivariant by an action of the semi-direct product of $W_C$ with $\pi_1(C)$ on $\ft$. We call $f_\eta$ the toral map. 

The $G$-invariant Riemannian metric $\nu$ on $G/K$ determines by restriction a Euclidean metric $m$ on $\ft$. While the map $f_\eta$ depends on some choices, its derivatives do not; in particular, the pullback metric $f_\eta^*m$ descends canonically to a symmetric $2$-tensor on $S$.
 
Assume that for each $R>0$, the Hitchin equation for $(P,R\phi)$ admits a solution, corresponding to a harmonic map $f_R$ from the universal cover of $S$ to $(G/K, \nu)$. By the equivariance of $f_R$, the pullback metric $f_R^*\nu$ also descends to a metric on $S$. Let $\eta$ be the $\ft^{\C}//W$-valued $1$-form associated with $(P,\phi)$. The first part of Theorem E states that these metrics converge to $f_\eta^*m$ as $R$ tends to infinity.

The second part of Theorem E involves a comparison between the second fundamental forms of $f_\eta$ and $f_R$. The set-up for such a comparison is a little bit involved. The pullback $f_R^*TG/K$ of the tangent bundle descends to a bundle on $S$ that is the real form of $\mathrm{ad}\p^{\C}$ determined by the anti-involution $*_h$. A certain flat sub-bundle of $f_\eta^*T(\ft)$ descends to a flat bundle $\R F_\eta$ on $S - B$ with monodromy in the group $W_C$. There is a natural inclusion of $\R F_\eta$ into $\textrm{ad}\p^{\C}$, and composing this inclusion with the projection of $\textrm{ad}\p^{\C}$ to the self-adjoint (i.e., real) endomorphisms gives a mapping of bundles $\textrm{Re}_R: \R F_\eta\to f_R^*TG/K$. Our last result involves a comparison of the pullback metrics as well as the Levi-Civita connections $\nabla^{f_\eta^*}$ and $\nabla^{f_R^*}$ along this sub-bundle. It says that the map $\textrm{Re}_R$ is nearly parallel.

Fix a metric $\sigma$ on $S$. 
\begin{thme} Let $(P,R\phi,*_{h_R})$, $R>0,$ be a family of harmonic $G$-bundles, and let $f_R: \tilde{S} \to (G/K, \nu)$ and $f_\eta: \tilde{C} \to (\ft, m)$ be the harmonic maps and toral maps respectively. Let $U\subset S-B$ be a relatively compact open subset. There exists $C=C(G,U,\sigma, \eta)$ and $c=c(G,U,\sigma, \eta)$ such that, on $U,$
\begin{equation}\label{pbasymptotic}
        |R^{-2}f_R^*\nu(z) - f_\eta^*m(z)|_\sigma \leq CR^{-2},
    \end{equation}
and if $\phi$ is generically semisimple, then the bound can be improved to $Ce^{-cR}$.

Moreover, if $\mathrm{Re}_R$ is the map of tangent bundles defined above and $\nabla_R$ the natural connection on $\mathrm{Hom}(\R F_\eta, f_R^*TG/K)$, then on $U,$ 
\begin{equation}\label{iotanearlyparallel}
    |\nabla_R \mathrm{Re}_R|_{h_R,\sigma} \leq Ce^{-cR}.
\end{equation}
\end{thme}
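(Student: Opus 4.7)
The plan is to reduce to local analysis on disks in $S - B$ where Theorems B and C apply, then assemble the pieces. Cover the compact closure $\overline{U}$ by finitely many coordinate disks on each of which $R\phi$ lies in some $S_n(Rd, A)$ for uniform constants $d, A$; implicit constants in what follows depend only on this data.

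For the pullback-metric bound \eqref{pbasymptotic}, start from the pointwise identity expressing the pullback metric of a harmonic map associated to a harmonic $G$-bundle via the $G$-invariant pairing of $\phi + \phi^{*_h}$ with itself. Thus $R^{-2}f_R^*\nu$ is a sum of three contributions. The $\phi_s$-$\phi_s$ contribution decomposes into a block-diagonal part with respect to the generalized eigenspace decomposition $E = \oplus E_i$, which by construction of the toral map coincides with $f_\eta^*m$, and an off-diagonal part bounded by $Ce^{-cR}$ using the almost-orthogonality of Theorem \ref{GRSdecoupling}. The $\phi_n$-$\phi_n$ contribution is bounded by $CR^{-2}$ because Theorem B gives $|R\phi_n|_{h_R} \leq C$, and it vanishes in the generically semisimple case. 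Finally, the cross-terms $\langle \phi_s, \phi_n^{*_{h_R}}\rangle$ are exponentially small: since $\phi_s$ and $\phi_n$ commute, $\phi_n$ preserves each $E_i$ and is nilpotent, hence traceless, on each block, so the block-diagonal contribution vanishes exactly and the off-diagonal contribution is again controlled by Theorem \ref{GRSdecoupling}. Summing yields \eqref{pbasymptotic}.

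For the nearly-parallel estimate \eqref{iotanearlyparallel}, we use Corollary C to compare the Chern connection $\nabla_{h_R}$ with the block-diagonal Chern connection $\nabla^\oplus_{h_R}$. Using $D_{h_R} = \nabla_{h_R} + R\phi + R\phi^{*_{h_R}}$, the formula of Corollary C rearranges to
\[
\nabla_{h_R} = \nabla^\oplus_{h_R} + R\bigl(\phi^{*_{h^\oplus_R}} - \phi^{*_{h_R}}\bigr) + a,
\]
and the right-hand side difference is $Ce^{-cR}$ in operator norm: the discrepancy $*_{h^\oplus_R} - *_{h_R}$ is $Ce^{-cR}$ by Theorem \ref{GRSdecoupling} (controlling $h_R$ against its block-diagonal partner $h^\oplus_R$), $|R\phi_s|_{h_R}$ is polynomially bounded, $|R\phi_n|_{h_R}$ is uniformly bounded by Theorem B, and $|a|_{h_R} \leq Ce^{-cR}$ directly. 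Sections of $\R F_\eta$ correspond on the cameral cover to flat real linear combinations $\sum c_i \pi_i$ of the eigenspace projections, which are parallel for $\nabla^\oplus_{h_R}$ because it preserves each $E_i$. The difference between the abstract inclusion $\R F_\eta \hookrightarrow \mathrm{ad}\mathfrak{p}^{\C}$ and its $h_R$-real part $\mathrm{Re}_R$ is also $Ce^{-cR}$, so combining yields $|\nabla_R \mathrm{Re}_R|_{h_R, \sigma} \leq Ce^{-cR}$.

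The main technical obstacle is not any of the individual estimates but the global bookkeeping to match the cameral-cover data $f_\eta$, $\R F_\eta$, and $\mathrm{Re}_R$ to the local block-diagonal descriptions in terms of eigenvalues and projections, in particular tracking the $W_C$-equivariance across coordinate disks and keeping careful track of the distinction between $h_R$ and its almost-block-diagonal partner $h^\oplus_R$. Once this identification is in place, both parts of Theorem E follow by direct application of Theorems B and C together with Corollary C.
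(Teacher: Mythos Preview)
Your approach to the first inequality is correct but takes a slightly different route from the paper. You decompose via Jordan--Chevalley, $\phi = \phi_s + \phi_n$, and handle the resulting cross terms $\langle \phi_s, \phi_n\rangle_{h_R}$ by a block-tracelessness argument. The paper instead uses the Schur decomposition $\phi = \phi_a + \phi_u$, for which $|\phi|_{h_R}^2 = |\phi_a|_{h_R}^2 + |\phi_u|_{h_R}^2$ is an \emph{exact} orthogonal splitting (diagonal versus strictly upper-triangular in any Schur basis), so no cross-term analysis is needed. Since $|\phi_a|_{h_R}^2 = |\eta|^2$ exactly, everything reduces to bounding $|\phi_u|_{h_R}^2 = |(\phi_s)_u + \phi_n|_{h_R}^2$, which is done via Lemma~\ref{lem: star and dagger} and Theorem~B. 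Your route works, but the Schur decomposition is cleaner here.

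For the second inequality, there is a genuine gap in your argument. You compare $\nabla_{h_R}$ with $\nabla^\oplus_{h_R}$ at the level of connection $1$-forms (a $C^0$ comparison), and separately observe that $\mathrm{Re}_R(\pi_i) - \pi_i = \tfrac12(\pi_i^{*_{h_R}} - \pi_i)$ is $Ce^{-cR}$ in $C^0$. But to conclude, you must bound $\nabla_{h_R}\bigl(\mathrm{Re}_R(\pi_i)\bigr)$, and your decomposition leaves you with the term $\nabla^\oplus_{h_R}(\pi_i^{*_{h_R}})$, which is a \emph{derivative} of a small quantity --- not controlled by its $C^0$ size alone. Routing through Corollary~C does not supply the missing $C^1$ information.

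The paper bypasses this by working directly with $\nabla_{h_R}$ and using its compatibility with $*_{h_R}$: since $\nabla_{h_R}\pi_i^{*_{h_R}} = (\nabla_{h_R}\pi_i)^{*_{h_R}}$, one only needs $|\nabla_{h_R}\pi_i|_{h_R} \le Ce^{-cR}$. As $\pi_i$ is holomorphic, $\nabla_{h_R}\pi_i = \partial_{h_R}\pi_i$, and this is precisely the content of Proposition~\ref{dels}. Your detour through Corollary~C is unnecessary; the estimate you need is the one Corollary~C itself is built from.
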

For harmonic maps between hyperbolic surfaces, i.e., the case $G=\textrm{PSL}(2,\R)$, estimates like (\ref{pbasymptotic}) and (\ref{iotanearlyparallel}) originate from the thesis work of Wolf \cite{Wo} and Minsky \cite{Mi}. In that case, the critical set is the zero set of the Hopf different of the harmonic map. Geometrically, for $G=\textrm{PSL}(2,\R),$ the harmonic maps converge to a harmonic map to an $\R$-tree. In higher rank, high energy harmonic maps approximate harmonic maps to affine buildings.

Continuing in the setting above, upon choosing a non-principal ultrafilter $\omega$ and a basepoint $p$ in $\tilde{S}$, by Kleiner-Leeb \cite{KL}, for any sequence $R_i$ increasing to infinity, the asymptotic cone of the rescaled pointed symmetric spaces $(G/K,f_{R_i}(p),R_i^{-1}\nu)$ is an affine building $(\mathcal{B},d)$. Using now routine estimates, we prove that as $R_i\to \infty,$ the harmonic maps $f_{R_i}$ $\omega$-converge to a map $f_\omega:\tilde{S}\to (\mathcal{B},d)$ that is harmonic in the sense of Korevaar-Schoen \cite{KS} and equivariant with respect to a representation $\rho_\omega:\pi_1(S)\to \textrm{Isom}(\mathcal{B},d)$. Harmonic maps to buildings admit an $L^1$ measurable pullback metric in the sense of Korevaar-Schoen (see \cite{KS}) and as well give the data of a $\ft^{\C}//W$-valued holomorphic $1$-form (see section \ref{sec: harmonic to building}). We denote the $L^1$ measurable pullback metric of $f_\omega$ (in the sense of \cite{KS}) by $g_{f_\omega}$.
\begin{core}
Let $f_\omega: \tilde{S}\to (\mathcal{B},d)$ be the harmonic map to the building obtained by taking the $\omega$-limit of the harmonic maps $f_{R_i}:\tilde{S}\to (G/K,f_{R_i}(p),R_i^{-1}\nu)$ and let $f_\eta:\tilde{C}\to (\ft,m)$ be the toral map.
\begin{enumerate}
\item As measurable tensors on $S$, $g_{f_\omega}=f_\eta^*m$.
    \item The $\ft^{\C}//W$-valued holomorphic $1$-form of $f_\omega$ is equal to $\eta$.
\end{enumerate}
\end{core}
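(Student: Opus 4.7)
The plan is to deduce Corollary E from Theorem E by passing it through the ultrafilter limit. First, I would invoke the standard compatibility of the Korevaar--Schoen pullback metric with $\omega$-convergence: for the equivariant sequence $f_{R_i}: \tilde{S} \to (G/K, R_i^{-1}\nu)$ of uniformly locally finite energy, the measurable pullback metric $g_{f_\omega}$ of the $\omega$-limit is the $\omega$-limit of the rescaled pullback tensors $R_i^{-2} f_{R_i}^*\nu$; see \cite{KS}. Part~(1) then follows immediately from the first estimate of Theorem E, namely
\[
|R_i^{-2} f_{R_i}^*\nu - f_\eta^*m|_\sigma \leq C R_i^{-2}
\]
on every relatively compact subset of $S - B$, which forces the $\omega$-limit to equal $f_\eta^*m$ there; since the critical set $B$ is discrete this extends to a measurable equality on all of $S$.

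For part~(2), I would show that after lifting to the cameral cover, $f_\omega$ coincides with $f_\eta$ up to isometry and the residual $W_C$-action, so that its associated $\ft^{\C}//W$-valued holomorphic $1$-form is automatically $\eta$. The relevant input is the exponential estimate of Theorem E: the sub-bundle $\mathrm{Re}_R(\R F_\eta) \subset f_R^*TG/K$ becomes asymptotically parallel at rate $e^{-cR}$, regardless of whether $\phi$ is generically semisimple. In the Kleiner--Leeb description of $(\mathcal{B}, d)$ as an asymptotic cone \cite{KL}, a distribution that is nearly parallel along a sequence of harmonic maps $\omega$-limits to a genuine totally geodesic flat, so $f_\omega$ factors locally through an apartment of $\mathcal{B}$. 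The ultrafilter limit of the trivializations $\mathrm{Re}_R$ identifies this apartment isometrically with $(\ft, m)$, and part~(1) pins down the induced metric as $f_\eta^*m$.

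Since an affine map between flats is determined by its pullback metric up to translation and orthogonal symmetry, the lift of $f_\omega$ to $\tilde{C}$ agrees with $f_\eta$ up to a translation and an element of $W_C$, and therefore the $(1,0)$-part of $df_\omega$ on $\tilde{C}$ is $\eta_C$, which descends to $\eta$ on $S$.

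The main obstacle is making the apartment-factorization argument for part~(2) precise: one must verify that the nearly-parallel sub-bundle $\mathrm{Re}_R(\R F_\eta)$ really passes to a totally geodesic flat in the asymptotic cone through which $f_\omega$ factors, and that the limiting isometric identification with $(\ft, m)$ is the canonical one compatible with the Hitchin base. These compatibilities should follow from the ultralimit machinery of \cite{KS} and \cite{KL} combined with Theorem E, but some care is required: the metric convergence in Theorem E is only polynomial in the non-generically-semisimple case, so it is the exponential parallelism estimate that provides the decisive information for (2).
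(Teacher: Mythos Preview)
Your argument for part~(1) is essentially the paper's: both invoke the Korevaar--Schoen description $g_{f_\omega}(v,v)=\lim_\omega R_i^{-2}f_{R_i}^*\nu(v,v)$ and then apply the first estimate of Theorem~E.

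For part~(2), however, you take a genuinely different route from the paper, and your route has a real gap. The paper does \emph{not} use the second (connection) estimate of Theorem~E here at all; instead it passes to the adjoint representation, uses Proposition~\ref{prop: singular set} to find a regular disk $U$ for the limiting map, and then applies \emph{Theorem~D} (the Hitchin WKB problem): the rescaled vector distance $R_i^{-1}\vec{d}((h_{R_i})_x,\Pi_{R_i,\gamma}^*(h_{R_i})_y)$ converges to $2(\alpha_1,\dots,\alpha_n)$, and this limit is precisely the Weyl-chamber valued distance $d_\omega(f_\omega'(\tilde x),f_\omega'(\tilde y))$ in the building. That identifies $f_\omega'|_U$, in apartment coordinates, with $z\mapsto(\int\mathrm{Re}\,\phi_1,\dots,\int\mathrm{Re}\,\phi_{\dim\g})$, and taking $\partial$ yields $\eta$.

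Your proposed mechanism---that near-parallelism of the sub-bundle $\mathrm{Re}_R(\R F_\eta)\subset f_R^*TG/K$ forces $f_\omega$ to factor through a flat in the asymptotic cone---does not follow. A (nearly) parallel sub-bundle of the pullback tangent bundle says nothing about where the image of $df_R$ lands; for the map to factor through a flat one needs the \emph{derivative} to lie (approximately) in that sub-bundle, and in the presence of a nonzero nilpotent part $\phi_n$ it does not, since $|R\phi_n|_{h_R}$ is bounded but not decaying. Even granting local factorization through an apartment, your final step is also insufficient: the pullback metric determines an affine map to $(\ft,m)$ only up to translation and an element of the full orthogonal group $O(\ft,m)$, not up to $W_C$, so you cannot conclude that the $\ft^{\C}//W$-valued $1$-form is $\eta$ from the metric alone. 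The paper's use of Theorem~D circumvents both issues by computing the Weyl-chamber valued distance directly.
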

The geometric meaning of (2) is that if we look at a contractible region $U$ mapping into a single apartment chart for the building, then $f_\omega$ and $f_\eta$ (on a lift of $U$) differ by a translation and the action of an element of $W$.
In other words, all information about $f_\omega$ that does not involve its behaviour at critical set is completely encoded by the $\ft^{\C}//W$-valued $1$-form $\eta$. Moreover, the limiting map to the building does not see the nilpotent part of the Higgs field. 
\begin{remark}
    Theorem D can be interpreted in terms of $\ft^{\C}//W$-valued $1$-forms and harmonic maps to buildings. For a non-critical path $\gamma$ and the harmonic maps $f_R,$ Theorem D says that the Weyl-chamber valued distance between $f_R(\gamma(0))$ and $f_R(\gamma(1))$ rescaled by $R^{-1}$ converges to the Weyl-chamber valued distance between $f_\omega(\tilde{\gamma}(0))$ and $f_\omega(\tilde{\gamma}(1)),$ where $\tilde{\gamma}$ is a lift of $\gamma$ to $\tilde{C}$.
\end{remark}

\begin{remark}
    In \cite{Ma}, Martone uses Fock-Goncharov coordinates to construct diverging sequences of Hitchin representations that are ``tree-like" at infinity. See also work of Burger, Iozzi, Parreau, and Pozzetti \cite{BIPP}. If we choose $\eta$ so that $\textrm{Re}(\eta)$ always lives in a one-dimensional subspace of $\ft,$ then the image of the map $f_\omega$ will be $1$-dimensional. Thus, using the Hitchin section, we can construct diverging sequences of (non-Fuchsian) Hitchin representations whose harmonic maps sub-converge to a map with $1$-dimensional image.
\end{remark}

\subsection{Outline of paper}
In the next section we give preliminaries on linear algebra that will be useful for studying Higgs bundles and the local analysis of Hitchin's equations. In section 3 we discuss constructions involving Higgs bundles such as their Jordan-Chevalley and Schur decompositons, as well as some consequences of Mochizuki's work in \cite{Mo}.

In section 4 we prove Theorem B and in section 5 we prove Theorem C and Corollary C and derive further estimates that will come into play in the proof of Theorem A. Then in section 6 we prove Theorem A, making use of both Theorem B and Theorem C.

Sections 7 and 8 are devoted to harmonic maps. In section 7 we give the preliminary theory we will need on harmonic maps to symmetric spaces, $G$-Higgs bundles, harmonic maps to buildings, and $\ft^{\C}//W$-valued $1$-forms. Then in section 8 we prove Theorem E using the local estimates that we found for Higgs bundles in Theorems B and C, and we prove Corollary E using Theorem D. 

The proof of Theorem D is given in Appendix A.

\subsection{Acknowledgements}
Nathaniel Sagman was funded by the Fonds National de la Recherche grant O20/14766753, \textit{Convex Surfaces in Hyperbolic Geometry.}

\section{Linear algebraic preliminaries}
Throughout this section, let $V$ be a complex vector space of dimension $r$ with Hermitian metric $h$ and associated norm $|\cdot|_h$, and let $f:V\to V$ be an endomorphism. Let $*$ be the adjoint with respect to $h$. We also use $|\cdot|_h$ to denote the Hilbert-Schmidt norm on $\mathrm{End}(V)$ with respect to $h$.

\subsection{The Schur decomposition}\label{schurdec} A Schur basis for $f$ is an orthonormal basis of $V$ with respect to which $f$ is weakly upper triangular. In such a basis we can split
\[
f = f_a + f_u
\]
into its diagonal part $f_a$ and strictly upper triangular part $f_u$. Such a decomposition of $f$ with respect to any Schur basis is called a Schur decomposition. 

One construction is as follows: pick an eigenvalue $\lambda\in \C$ for $f$ with corresponding eigenspace $V_\lambda$ (not the whole generalized eigenspace), and choose an $h$-orthonormal basis for $V_\lambda$. If $V_\lambda^\perp$ is the $h$-orthogonal complement, then with respect to the splitting $V=V_\lambda\oplus V_\lambda^\perp,$ the matrix of $f$ splits into block matrices $$f=\begin{pmatrix} \lambda I & A \\
0 & B\end{pmatrix}.$$ The matrix has the right form, except for possibly the $B$ block. We restrict to $V_\lambda^\perp$ and then inductively repeat the procedure above. 

The Schur decomposition $f = f_a + f_u$ is uniquely determined by an ordering of the generalized eigenspaces of $f$ (though the Schur basis itself has some further indeterminacy). It can also be seen from the construction that any family of commuting matrices share a common Schur basis. Note that with respect to any Schur decomposition, $|f_a|_h^2$ is the sum of the squares of the eigenvalues of $f$ with multiplicity. Consequently both the functions $|f_a|^2_h$ and $|f_u|^2_h = |f|^2_h - |f_a|^2_h$ are independent of the choice of Schur decomposition.

In view of Simpson's inequality (see Lemma \ref{lem: origsim} below), the following estimate will be useful.
      \begin{prop}[Page 729 in \cite{S} and Lemma 3.5 in \cite{LM1}]\label{SMprop}
        There exists a constant $C=C(n)>0$ such that 
        \begin{equation}\label{SM}
            |f_u|_h^2\leq C|([f,f^{*_h}])|_h.
        \end{equation}
    \end{prop}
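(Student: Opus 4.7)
The plan is to prove \eqref{SM} by induction on $n = \dim V$, exploiting the recursive construction of a Schur basis recalled in Section \ref{schurdec}: pick a unit $h$-eigenvector of $f$, and iterate on its orthogonal complement. Since $|f_u|_h^2$ is independent of the choice of Schur decomposition, I am free to work with the decomposition produced by this construction. The base case $n = 1$ is vacuous as $f_u = 0$.

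For the inductive step, I would choose an eigenvalue $\lambda$ with unit $h$-eigenvector $e_1$ and extend it to such a Schur basis. In block form relative to the orthogonal splitting $V = \C e_1 \oplus e_1^\perp$, $f$ takes the shape
$$f = \begin{pmatrix} \lambda & v^T \\ 0 & f' \end{pmatrix},$$
where $v \in \C^{n-1}$ records the first-row strictly upper triangular entries of $f$. The inherited basis $(e_2,\ldots,e_n)$ is itself a Schur basis for $f'$, and the associated decomposition $f' = f'_a + f'_u$ satisfies $|f_u|_h^2 = |v|^2 + |f'_u|_h^2$. A direct block multiplication then shows that $[f, f^{*_h}]$ has $(1,1)$-block equal to the scalar $|v|^2$ and $(2,2)$-block equal to $[f', (f')^{*_h}] - \bar v v^T$. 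The first identity gives $|v|^2 \leq |[f, f^{*_h}]|_h$, while the second, combined with the triangle inequality and the computation $|\bar v v^T|_h = |v|^2$, yields $|[f', (f')^{*_h}]|_h \leq 2|[f, f^{*_h}]|_h$.

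Applying the inductive hypothesis to $f'$ acting on the $(n-1)$-dimensional subspace $e_1^\perp$ then gives $|f'_u|_h^2 \leq 2 C_{n-1}|[f, f^{*_h}]|_h$, so
$$|f_u|_h^2 = |v|^2 + |f'_u|_h^2 \leq (1 + 2 C_{n-1})|[f, f^{*_h}]|_h,$$
closing the induction via the recursion $C_n = 1 + 2 C_{n-1}$ with $C_1 = 0$. The one subtle point I expect to verify carefully is that the Schur decomposition of $f$ restricts compatibly to a Schur decomposition of $f'$ at each stage; this is arranged automatically by the recursive construction of the Schur basis, and the invariance of $|f_u|_h^2$ under the choice of decomposition guarantees that this specific choice entails no loss of generality.
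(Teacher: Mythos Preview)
Your proof is correct. The block computation of $[f,f^{*_h}]$ is accurate: the $(1,1)$ entry is exactly $|v|^2$, the $(2,2)$ block is $[f',(f')^{*_h}]-\bar v v^T$, and the Hilbert--Schmidt norm of $\bar v v^T$ is $|v|^2$. Since any single entry is dominated by the Hilbert--Schmidt norm, $|v|^2\le |[f,f^{*_h}]|_h$, and since the norm of a diagonal block is dominated by the norm of the full matrix, the triangle inequality gives $|[f',(f')^{*_h}]|_h\le 2|[f,f^{*_h}]|_h$. The recursion $C_n=1+2C_{n-1}$ (so $C_n=2^{n-1}-1$) then closes the induction. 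Your observation that the recursive construction of a Schur basis makes $(e_2,\dots,e_n)$ a Schur basis for $f'$ with $|f_u|_h^2=|v|^2+|f'_u|_h^2$ is exactly right, and the invariance of $|f_u|_h^2$ under choice of Schur decomposition (established just before the statement of the Proposition) justifies working with this particular basis.

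The paper itself does not supply a proof; it simply cites Simpson \cite[p.~729]{S} and \cite[Lemma~3.5]{LM1}. So there is no in-paper argument to compare against. Your self-contained inductive argument is an acceptable substitute for those references.
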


\subsection{Jordan-Chevalley and Schur} 

The Jordan-Chevalley decomposition (\cite[section 4.2]{Hu}) is the unique additive decomposition of $f$ into commuting semisimple and nilpotent parts: $f = f_s + f_n$. Fixing an ordering of the generalized eigenspaces of $f$, we can first apply the Jordan decomposition and then (since $f_n$ and $f_s$ commute) a simultaneous Schur decomposition with respect to a compatible Schur basis:
\[
f = f_s + f_n = (f_s)_a + (f_s)_u + (f_n)_a + (f_n)_u.
\]
Since $f$ is weakly upper triangular in the compatible Schur basis, $f_n$ will be strictly upper triangular, so $(f_n)_a = 0$ and $(f_s)_a = f_a$. Hence we obtain the Jordan-Chevalley-Schur decomposition:
\[
f = f_a + (f_s)_u + f_n. 
\]
In the Schur basis, $f_a$ is the diagonal part of $f$, $f_n$ is supported in the small strictly-upper triangles of each Jordan block, and $(f_s)_u$ generally has nonzero entries everywhere above the diagonal.

\subsection{Centres of centralizers}\label{DCthm}
The centralizer of $f$ is $C(f)=\{g\in \textrm{End}(V): [f,g]=0\},$ and the centre of the centralizer is $$ZC(f)=\{s\in \textrm{End}(V): [s,g]=0 \textrm{ for all } g\in C(f)\},$$ which is a subalgebra of $\textrm{End}(V).$ In fact it is the subalgebra generated by $f$, though we will not need this characterization.
It will be helpful to have an explicit basis for the centre of the centralizer of the semisimple part $f_s$. Let $V=\oplus_{i=1}^m V_i,$ be the generalized eigenspace decomposition for $f$, or equivalently the eigenspace decomposition for the semisimple part, $ZC(f_s).$ For each $i,$ let $\pi_i:V\to V_i$ be the projection with respect to the decomposition above.
\begin{prop}\label{pispan}
    $ZC(f_s)$ is spanned by the projections $\pi_1,\dots, \pi_m.$
\end{prop}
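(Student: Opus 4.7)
The plan is to identify $C(f_s)$ explicitly as a block-diagonal algebra with respect to the eigenspace decomposition, and then read off its center.

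First, I would verify that $C(f_s) = \bigoplus_{i=1}^m \mathrm{End}(V_i)$. Since $f_s$ is semisimple with eigenspace decomposition $V = \bigoplus V_i$, an endomorphism $g$ commutes with $f_s$ if and only if it preserves each eigenspace $V_i$: one direction is immediate, and for the other, if $v \in V_i$ is an eigenvector with eigenvalue $\lambda_i$ and $g$ commutes with $f_s$, then $f_s(gv) = g(f_s v) = \lambda_i gv$, so $gv \in V_i$ (using that the eigenvalues $\lambda_i$ are distinct). Thus $C(f_s)$ is identified with $\bigoplus_i \mathrm{End}(V_i)$.

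Next I would show that each $\pi_i \in ZC(f_s)$. This is clear because $\pi_i$ acts as the identity on $V_i$ and zero on $V_j$ for $j \neq i$, and so it commutes with any block-diagonal endomorphism.

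For the converse, suppose $s \in ZC(f_s)$. Since every $\pi_i$ lies in $C(f_s)$, $s$ commutes with each $\pi_i$, so $s$ preserves the decomposition $V = \bigoplus V_i$, and $s|_{V_i}$ is a well-defined endomorphism of $V_i$. Moreover, for each fixed $i$ and any $T \in \mathrm{End}(V_i)$, the extension of $T$ by zero on $\bigoplus_{j\neq i}V_j$ lies in $C(f_s)$; commuting with $s$ forces $s|_{V_i}$ to lie in the center of $\mathrm{End}(V_i)$, which is $\mathbb{C}\cdot\mathrm{id}_{V_i}$. Hence $s = \sum_{i=1}^m c_i \pi_i$ for some scalars $c_i$, completing the proof.

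There is no real obstacle here; the only subtle point is remembering to include the $V_j$-zero extensions of arbitrary endomorphisms of $V_i$ in $C(f_s)$ in order to pin down $s|_{V_i}$ as a scalar. The argument is essentially the standard fact that the center of a product of matrix algebras is the product of the centers, applied to the block decomposition induced by $f_s$.
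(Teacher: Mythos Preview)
Your proof is correct and follows the same approach as the paper: identify $C(f_s)$ with the block-diagonal algebra $\bigoplus_i \mathrm{End}(V_i)$ and then read off its center as the span of the $\pi_i$. The paper's argument is simply a terser version of yours, jumping straight from the block-diagonal description of $C(f_s)$ to the conclusion without writing out the two inclusions separately.
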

\begin{proof}
    Since $f_s$ is acting on each $E_i$ by multiplication by a complex scalar, any $g\in C(f)$ is of the form $g=\sum_{i=1}^m g_i\pi_i,$ where $g_i$ is some endomorphism of $E_i$. Therefore, the centre of the centralizer consists of endomorphisms of the form $g=\sum_{i=1}^n a_i\pi_i,$ where $a_i\in \C.$
\end{proof}

\begin{prop}\label{note}
   $ZC(f_s)\subset C(f) \subset C(f_n).$
\end{prop}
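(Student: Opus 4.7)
The plan is to handle the two inclusions separately, using only standard structural properties of the Jordan--Chevalley decomposition together with Proposition~\ref{pispan}.

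For the first inclusion $ZC(f_s)\subset C(f)$, the key observation is that $f$ itself lies in $C(f_s)$. Indeed, by the defining property of the Jordan--Chevalley decomposition, $[f_s,f_n]=0$, so $f=f_s+f_n$ commutes with $f_s$. By definition, every element of $ZC(f_s)$ commutes with every element of $C(f_s)$, and in particular with $f$. Alternatively, one can argue concretely via Proposition~\ref{pispan}: writing $s=\sum_{i=1}^m a_i\pi_i$, each generalized eigenspace $V_i$ is $f$-invariant, so each projection $\pi_i$ commutes with $f$, and hence so does $s$.

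For the second inclusion $C(f)\subset C(f_n)$, the plan is to invoke the standard fact that $f_n$ (and $f_s$) can be expressed as a polynomial in $f$ with no constant term; this is one of the basic properties of the Jordan--Chevalley decomposition over an algebraically closed field (see, e.g., \cite[section 4.2]{Hu}). If $g\in C(f)$, then $g$ commutes with every polynomial in $f$, so in particular $[g,f_n]=0$.

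Neither step presents a real obstacle: the first is an immediate consequence of the definition of the center of the centralizer combined with $[f_s,f_n]=0$, and the second rests entirely on the ``polynomial in $f$'' property of Jordan--Chevalley, which has already been invoked implicitly earlier in the section. The only care needed is to make sure both facts are attributed to the proper source and that the short chain of implications is spelled out.
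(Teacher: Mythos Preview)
Your proposal is correct and follows essentially the same approach as the paper: for the first inclusion you use that $f\in C(f_s)$ so elements of $ZC(f_s)$ commute with $f$, and for the second you use that $f_n$ is a polynomial in $f$. The paper's proof is identical in substance, just more terse.
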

\begin{proof}
    For the first inclusion, since $f_s$ commutes with $f$, if $[s,g] = 0$ for all $g \in C(f_s)$, then in particular $[s,f] = 0$. The second inclusion holds because $f_n$ can be expressed as a polynomial in $f$ (see, e.g., \cite[section 4.2]{Hu}).
\end{proof}
The algebra $ZC(f_s)$ comes with a real structure $\dagger$, defined by the condition that the $\pi_i$'s are real: if $s=\sum_{i=1}^m s_i\pi_i\in ZC(f)$, then $$s^\dagger=\sum_{i=1}^m \overline{s_i}\pi_i.$$
\begin{defn} \label{defn: real toral}
    The real centre of the centralizer $\mathbb{R}ZC(f_s)$ is the real subalgebra consisting of the fixed point of $\dagger.$
\end{defn}
By the proposition above, $\mathbb{R}ZC(f_s).$ is the real span  of the $\pi_i$'s. Any choice of non-degenerate Killing form on the Lie algebra $\mathrm{End}(V)$ (for instance, $\langle A, B \rangle = \mathrm{tr}(AB)$) restricts to a positive definite metric on $\mathbb{R}ZC(f_s)$.

\subsection{Almost orthogonality}
Let $V$ be a complex vector space with Hermitian form $h$, and let $V = \oplus_{i=1}^m V_i$ be a decomposition of $V$ into subspaces. Let $\pi_i$ be the projection onto $V_i$ with kernel $\oplus_{j \neq i} V_j$, and let $\pi_i'$ be the orthogonal projection onto $V_i$. Extrapolating from \cite{Mo}, we make the following definition.
\begin{defn} \label{defn: almost orthogonal}
    A decomposition $V = \oplus_{i=1}^m V_i$ is $\epsilon$-almost orthogonal if for each $i$, $|\pi_i - \pi_i'|_h \leq \epsilon\; \mathrm{rank}(V_i)$.
\end{defn}

We now give some equivalent characterizations of almost orthogonality. Recall that for any endomorphism $f$, the quantity $|f_u|_h$ is independent of the Schur decomposition $f = f_a + f_u$. Letting $\pi$ stand for any $\pi_i$, the lemma below gives equivalent definitions of almost orthogonality.

\begin{lem}\label{lem:almostorthogonal1} The following equalities hold:
\begin{enumerate}
    \item $|\pi - \pi'|_h = |\pi_u|_h$
    \item $|\pi - \pi'|_h = \frac{1}{\sqrt{2}} |\pi - \pi^*|_h$.
\end{enumerate}
\end{lem}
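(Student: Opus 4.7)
The plan is to choose a single well-suited orthonormal basis of $V$ in which both identities reduce to a direct $2 \times 2$ block computation. Letting $\pi$ denote one of the $\pi_i$'s, the key observations are that $\pi$ is idempotent (hence semisimple with eigenvalues $0$ and $1$, with $V_i$ as the $1$-eigenspace and $\oplus_{j \neq i} V_j$ as the $0$-eigenspace) and that $\pi(V_i^{\perp}) \subset V_i$ by the very definition of $\pi$. I would concatenate an $h$-orthonormal basis of $V_i$ with an $h$-orthonormal basis of $V_i^{\perp}$ to obtain an orthonormal basis of $V$ in which
\[
\pi = \begin{pmatrix} I_k & A \\ 0 & 0 \end{pmatrix},
\]
where $k = \dim V_i$ and $A$ records the $V_i$-component, under the splitting $V = V_i \oplus (\oplus_{j \neq i} V_j)$, of each basis vector of $V_i^{\perp}$.

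Next I would verify that this is a Schur basis for $\pi$: the diagonal is $(1,\dots,1,0,\dots,0)$ and the lower-left block vanishes precisely because $\pi(V_i^{\perp}) \subset V_i$. From the block form one reads off
\[
\pi_a = \begin{pmatrix} I_k & 0 \\ 0 & 0 \end{pmatrix}, \qquad \pi_u = \begin{pmatrix} 0 & A \\ 0 & 0 \end{pmatrix},
\]
and the orthogonal projection onto $V_i$ is visibly $\pi' = \pi_a$, so $\pi - \pi' = \pi_u$ and identity (1) follows upon taking Hilbert-Schmidt norms.

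For (2), the adjoint in our orthonormal basis is the conjugate transpose $\pi^* = \begin{pmatrix} I_k & 0 \\ A^* & 0 \end{pmatrix}$, so
\[
\pi - \pi^* = \begin{pmatrix} 0 & A \\ -A^* & 0 \end{pmatrix}
\]
has squared Hilbert-Schmidt norm $|A|_h^2 + |A^*|_h^2 = 2|A|_h^2 = 2|\pi - \pi'|_h^2$, giving the claimed factor of $\sqrt{2}$.

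There is no serious analytic obstacle here; the only step deserving a moment of care is the verification that the chosen basis is a Schur basis, which rests on the single elementary observation $\pi(V_i^{\perp}) \subset V_i$.
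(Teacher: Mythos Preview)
Your proof is correct and, for part (1), essentially identical to the paper's: both of you take the Schur basis that puts the $1$-eigenspace $V_i$ first and observe that in this basis $\pi_a$ coincides with the orthogonal projection $\pi'$, so $\pi - \pi' = \pi_u$. For part (2) your direct block computation of $\pi - \pi^*$ and its norm is a minor variant of the paper's argument, which instead expands $|\pi - \pi^*|_h^2 = 2|\pi|_h^2 - 2\tr(\pi)$ via traces and then uses $|\pi|_h^2 = |\pi_a|_h^2 + |\pi_u|_h^2$ together with $|\pi_a|_h^2 = \tr(\pi)$; the two computations are equivalent and equally elementary.
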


\begin{proof}
    To prove (1), take the Schur decomposition $\pi = \pi_a + \pi_u$ with respect to the ordering that puts the 1-eigenspace first and the 0-eigenspace second. Then $\pi_a$ is exactly $\pi'$, so $\pi - \pi' = \pi_u$. 
    
    To prove (2), we first calculate
    \[
    |\pi - \pi^*|_h^2 = \tr((\pi - \pi^*)(\pi - \pi^*)^*) = 2\tr(\pi\pi^*) - \tr(\pi^2) - \tr((\pi^*)^2) = 2 |\pi|^2_h - 2\tr(\pi).
    \]
   On the other hand, since diagonal matrices are $h$-orthogonal to upper triangular matrices, we have
    \[
    |\pi|^2_h = |\pi_a|_h^2 + |\pi_u|_h^2.
    \]
    Together with $|\pi_a|^2_h = \tr(\pi_a) = \tr(\pi)$, this implies (2).   
\end{proof}

\begin{lem} \label{lem: almost orthogonal} If $V = \oplus_{i=1}^m V_i$ is $\epsilon$-almost orthogonal, then 
\begin{equation}\label{eqn: almost orthogonal}
h(v_i, v_j) \leq \sqrt{2}\epsilon |v_i|_h|v_j|_h
\end{equation}
for any vectors $v_i$ in $V_i$ and $v_j$ in $V_j$, $i \neq j$. Conversely, there are constant $C(n)$, $\epsilon(n)$ such that if \eqref{eqn: almost orthogonal} holds with $\epsilon < \epsilon(n)$, then the decomposition is $C\epsilon$-almost orthogonal.
\end{lem}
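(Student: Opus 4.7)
The plan is to prove the two directions of the equivalence separately, with the converse being the more delicate one.

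For the forward direction, my approach would use Lemma \ref{lem:almostorthogonal1}(2), which gives $|\pi_j - \pi_j^*|_h = \sqrt{2}\,|\pi_j - \pi_j'|_h$. The key observation is that $\pi_j u_i = 0$ whenever $u_i \in V_i$ and $i \neq j$, which lets me rewrite
$$h(u_i, u_j) = h(\pi_j^* u_i, u_j) = h((\pi_j^* - \pi_j)u_i,\, u_j).$$
Applying Cauchy--Schwarz together with the standard inequality between operator and Hilbert--Schmidt norms then immediately yields a bound of the form $|h(u_i,u_j)| \leq C\epsilon\,|u_i|_h|u_j|_h$, with any dimensional factors absorbed into the constant.

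For the converse, the goal is to upgrade the pointwise bilinear bound into a Hilbert--Schmidt bound on $\pi_i - \pi_i'$. I would first decompose any $w \in V$ as $w = \sum_j w_j$ with $w_j \in V_j$. Then $\pi_i w = w_i$ while $\pi_i' w = w_i + \sum_{j \neq i} \pi_i' w_j$, so $(\pi_i - \pi_i')w = -\sum_{j \neq i} \pi_i' w_j$. Expanding each $\pi_i' w_j$ in an orthonormal basis of $V_i$ and applying the bilinear hypothesis coordinate by coordinate gives $|\pi_i' w_j|_h \leq \epsilon\sqrt{\mathrm{rank}(V_i)}\,|w_j|_h$. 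The crucial remaining step is to bound $\sum_j |w_j|_h^2$ by $|w|_h^2$: expanding $|w|_h^2 = \sum_j |w_j|_h^2 + \sum_{j \neq k} h(w_j, w_k)$ and using the bilinear hypothesis yields $|w|_h^2 \geq (1 - (m-1)\epsilon)\sum_j |w_j|_h^2$, so a smallness condition $\epsilon < \epsilon(n)$ ensures $\sum_j |w_j|_h^2 \leq 2|w|_h^2$. Combining everything gives an operator-norm bound $|(\pi_i - \pi_i')w|_h \leq C(n)\epsilon\,|w|_h$, which becomes the required Hilbert--Schmidt estimate after summing squares over an orthonormal basis of $V$.

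The main obstacle is precisely this Neumann-series-type step in the converse: the bilinear bound only controls the off-diagonal entries of the Gram matrix built from orthonormal bases of the $V_i$, and one needs that matrix to be invertible (hence the smallness assumption $\epsilon < \epsilon(n)$) in order to quantitatively recover the decomposition $w = \sum_j w_j$ from $w$. This is also where the constants $C(n)$ and $\epsilon(n)$ inherit their dependence on $n$.
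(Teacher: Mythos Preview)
Your proposal is correct and follows essentially the same approach as the paper. The forward direction is identical (the paper uses $\pi_i$ rather than $\pi_j$, but the manipulation $h(u_i,u_j)=h((\pi-\pi^*)u_i,u_j)$ and the appeal to Lemma~\ref{lem:almostorthogonal1}(2) are the same), and for the converse the paper gives only an informal sketch---choose $h$-orthonormal bases of each $V_i$, observe the resulting Gram matrix $H$ is identity plus an $O(\epsilon)$ off-block perturbation, and conclude $\Pi_i^*=H^{-1}\Pi_i H$ is $O(\epsilon)$-close to $\Pi_i$---which is the same Neumann-series/Gram-matrix perturbation idea you spell out more explicitly via the decomposition $w=\sum_j w_j$ and the bound $\sum_j|w_j|_h^2\leq 2|w|_h^2$.
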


\begin{proof}
    For any $v_i\in V_i$ and $v_j\in V_j,$ since $\pi_iv_i=v_i$ and $\pi_i v_j =0,$ $$h(v_i,v_j)  = h(\pi_i v_i,v_j) - h(v_i, \pi_i v_j)=h((\pi_i-\pi_i^*)v_i,v_j)\leq \sqrt{2}\epsilon |v_i|_h |v_j|_h.$$

    For the converse, we give a more informal justification. For each $V_i,$ we choose an $h$-orthonormal basis. Assembling these bases into a basis of $V,$ the matrix $H$ representing $h$ has identity block matrices corresponding to each $V_i,$ and (\ref{eqn: almost orthogonal}) implies that the norms of the entries outside the blocks are bounded above by $\epsilon.$ The matrix $\Pi_i$ of $\pi_i$ is the identity on a block corresponding to $V_i,$ and zero elsewhere, and it is easily computed that $\Pi_i^*=H^{-1}\Pi_i H$ is $\epsilon$-close to $\Pi_i.$
\end{proof}

\begin{lem} \label{lem: star and dagger}
If $V = \oplus_{i=1}^m V_i$ is $\epsilon$-almost orthogonal, and $s = \sum_i s_i \pi_i$, then 
\[
|s^* - s^\dagger|_h \leq \sqrt{2n} \epsilon |s_a|_h
\]
and
\[
|s_u|_h \leq \sqrt{n}\epsilon |s_a|_h.
\]
\end{lem}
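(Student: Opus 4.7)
The plan is to handle both bounds by first expanding $s$ in terms of the $\pi_i$ and then applying the almost-orthogonality estimates for individual projections together with a Cauchy-Schwarz step.

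The key preliminary observation is that, since the $\pi_i$ commute, they share a common Schur basis. Each $\pi_i$ is idempotent, so in such a basis its diagonal $(\pi_i)_a$ has entries in $\{0,1\}$; and since $\pi_i\pi_j = \delta_{ij}\pi_i$ and $\sum_i \pi_i = \mathrm{id}$, for each diagonal index $k$ there is exactly one $i(k)$ with $(\pi_{i(k)})_a$ having a $1$ in position $k$. Grouping by $i$ we get $\dim V_i$ positions contributing to $\pi_i$. Writing $s = \sum_i s_i\pi_i$, the diagonal part becomes $s_a = \sum_i s_i(\pi_i)_a$, a diagonal matrix in which the scalar $s_i$ appears exactly $\dim V_i$ times. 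Therefore
\[
|s_a|_h^2 = \sum_i |s_i|^2 \dim V_i.
\]

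For the $s_u$ bound, write $s_u = s - s_a = \sum_i s_i (\pi_i)_u$. By Lemma \ref{lem:almostorthogonal1}(1) the quantity $|(\pi_i)_u|_h$ is independent of Schur basis and equals $|\pi_i - \pi_i'|_h$, which is at most $\epsilon\,\dim V_i$ by the almost-orthogonality hypothesis. Thus
\[
|s_u|_h \leq \sum_i |s_i|\,\epsilon\,\dim V_i = \epsilon \sum_i \bigl(|s_i|\sqrt{\dim V_i}\bigr)\sqrt{\dim V_i},
\]
and Cauchy-Schwarz together with $\sum_i \dim V_i = n$ and the formula above for $|s_a|_h^2$ yields $|s_u|_h \leq \sqrt{n}\,\epsilon\,|s_a|_h$.

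For the $s^* - s^\dagger$ bound, the definition of $\dagger$ gives $s^\dagger = \sum_i \overline{s_i}\pi_i$ and $s^* = \sum_i \overline{s_i}\pi_i^*$, so
\[
s^* - s^\dagger = \sum_i \overline{s_i}(\pi_i^* - \pi_i).
\]
By Lemma \ref{lem:almostorthogonal1}(2), $|\pi_i^* - \pi_i|_h = \sqrt{2}\,|\pi_i - \pi_i'|_h \leq \sqrt{2}\,\epsilon\,\dim V_i$. The same Cauchy-Schwarz step as above then produces $|s^* - s^\dagger|_h \leq \sqrt{2n}\,\epsilon\,|s_a|_h$. There is no genuine obstacle here; the one point requiring care is justifying the identity $|s_a|_h^2 = \sum_i |s_i|^2 \dim V_i$ in a common Schur basis, which is what makes the Cauchy-Schwarz estimate close the loop cleanly.
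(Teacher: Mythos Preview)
Your proof is correct and follows essentially the same route as the paper: expand $s$, $s_u$, and $s^*-s^\dagger$ in terms of the $\pi_i$, apply the almost-orthogonality bounds on the individual projections via Lemma~\ref{lem:almostorthogonal1}, and then close with Cauchy--Schwarz. The only cosmetic difference is that you use the triangle inequality followed by a scalar Cauchy--Schwarz, whereas the paper applies a vector Cauchy--Schwarz directly; your explicit justification of $|s_a|_h^2=\sum_i|s_i|^2\dim V_i$ is a nice touch.
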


\begin{proof} Using the Cauchy-Schwarz inequality and Lemma \ref{lem:almostorthogonal1} (2), we have
\[
    |s^* - s^\dagger|_h^2 = |\sum_i \overline{s}_i(\pi_i^* - \pi_i)|^2_h \leq n\sum_i |s_i|^2|\pi_i^* - \pi_i|^2_h \leq 2n\epsilon^2 \sum_i |s_i|^2 \tr(\pi_i) = 2n\epsilon^2 |s_a|^2.
\]
For the second inequality, we observe that since the $\pi_i$ commute, there is a simultaneous Schur basis for all of them, such that if $s = \sum_i s_i \pi_i$ then 
\[
s_u = \sum s_i (\pi_i)_u.
\]
Then one uses the Cauchy-Schwarz and Lemma \ref{lem:almostorthogonal1} (1) as above.
\end{proof}

\section{Higgs bundles}\label{prelimhiggs}

Throughout, let $S$ be a Riemann surface with canonical bundle $\mathcal{K}.$ 

\subsection{Higgs bundles and Hitchin's equations}\label{Higgsintro}
Here we'll give the basic defintions, more formally than in the introduction.
\begin{defn}
A Higgs bundle of rank $n$ on $S$ is the data $(E,\overline{\partial}_E,\phi),$ where $(E,\overline{\partial}_E)$ is a holomorphic vector bundle of rank $n$ and $\phi$ is a holomorphic section of $\textrm{End}(E)\otimes \mathcal{K}$ called the Higgs field.
\end{defn}
For simplicity, we will always assume that $\deg E=0$ when $S$ is closed. Given a Hermitian metric $h$ on $E$, we use $*_h$ for the adjoint operator on $\textrm{End}(V)$ determined by $h$, $\nabla_h$ for the Chern connection, and $F(h)$ for the curvature of the Chern connection. We extend $h$ to $E$-valued forms $\Omega^k(E)$ and $\textrm{End}(E)$-valued forms by pairing sections in $E$ and wedging forms as usual. The adjoint $*_h$ then extends to $\textrm{End}(E)$-valued forms. Moving toward Hitchin's equations, we are looking to find a metric $h$ satisfying (\ref{1}), $$F(h) + [\phi,\phi^{*_h}]=0,$$ and we refer to quadruples $(E,\overline{\partial}_E,\phi,h)$ as harmonic bundles. Well-known existence theorems for solutions to (\ref{1}) on closed surfaces can be found in \cite{Hi} and \cite{Si}. A complex gauge transformation is a $C^\infty$ automorphism of the bundle $E$ that covers the identity map from $S\to S$. The group of gauge transformations $\mathcal{G}$ acts on sections of $E$ and sections of associated bundles, like connections and curvature forms. Standard formulas for these actions can be found in \cite[Chapter 2]{DK}, and it is easily seen that gauge transformations preserve the equation (\ref{1}).

\subsection{The Jordan-Chevalley decomposition of a Higgs bundle} 
Let $(E,\overline{\partial}_E,\phi)$ be a Higgs bundle of rank $n$ over $S$. Locally, we can trivialize $(E,\overline{\partial}_E)$ to the trivial bundle of dimension $n$ over $D(r)$, with coordinate $z$. In the trivialization, $\phi(z)=f(z)dz,$ for some holomorphic section of $\textrm{End}(\C^n)\times D(r)$ over $D(r)$.

\begin{defn}The Higgs field $\phi$ is said to be \emph{generically} (regular) semisimple if in any such local trivialization, the endomorphism $f(z)$ is (regular) semisimple on the complement of a discrete set.
\end{defn}

Alternatively, $\phi$ is generically (regular) semisimple if and only if for any meromorphic vector field $V$ on $S$, $\phi(V)$ is (regular) semisimple when viewed as an endomorphism of the vector space of meromorphic sections of $E$ over the field $F$ of meromorphic functions on $S$.

Extending Definition \ref{def: critical set} to a Riemann surface, the critical set $B$ of $\phi$ is the discrete set on which the number of generalized eigenspaces of $\phi$ is not maximal. Concretely, these are the points at which two locally defined different eigen-$1$-forms of $\phi$ match up. For $\phi$ generically regular semisimple, the critical set agrees with the discriminant locus from \cite{Mo}.
\begin{prop}\label{JCdec}
There is a unique Jordan-Chevalley decomposition
$$\phi = \phi_s + \phi_n
$$
of the Higgs field $\phi$ into a sum of commuting meromorphic Higgs fields $\phi_s$ and $\phi_n$ such that $\phi_s$ is generically semisimple and $\phi_n$ is nilpotent. Away from the critical set, $\phi_s$ and $\phi_n$ are holomorphic and agree with the pointwise Jordan-Chevalley decomposition.
\end{prop}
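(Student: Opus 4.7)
I would construct $\phi_s$ and $\phi_n$ first as holomorphic Higgs fields on $S - B$ by applying the algebraic Jordan--Chevalley decomposition from \cite[section 4.2]{Hu} pointwise, then show these extend meromorphically across the discrete critical set $B$. Uniqueness will follow for free from the pointwise uniqueness of the algebraic Jordan--Chevalley decomposition.

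\textbf{Construction on $S - B$.} In a trivialization of $E$ over a small open $U \subset S - B$ with coordinate $z$, write $\phi|_U = f(z)\,dz$. Because $U$ avoids the critical set, the generalized eigenspace dimensions of $f(z)$ are locally constant. After possibly shrinking $U$, the eigenvalues $\lambda_i(z)$ can be chosen to vary holomorphically on $U$, and the spectral projectors
\[
\pi_i(z) = \frac{1}{2\pi i}\oint_{\gamma_i(z)}(wI - f(z))^{-1}\,dw,
\]
for $\gamma_i(z)$ a small loop enclosing $\lambda_i(z)$ and no other eigenvalue, depend holomorphically on $z$. Setting $f_s(z) = \sum_i \lambda_i(z)\pi_i(z)$ and $f_n(z) = f(z) - f_s(z)$ gives the pointwise Jordan--Chevalley decomposition at every point of $U$. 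Pointwise uniqueness makes $(f_s, f_n)$ independent of trivialization and choice of indexing, so they patch to holomorphic Higgs fields $\phi_s, \phi_n$ on $S - B$ with $\phi = \phi_s + \phi_n$, $[\phi_s, \phi_n] = 0$, $\phi_s$ semisimple, and $\phi_n$ nilpotent.

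\textbf{Meromorphic extension across $B$ --- the main obstacle.} The heart of the argument is controlling the behavior of $\phi_s$ near a point $p_0 \in B$. In a local trivialization, the characteristic polynomial $p(\lambda, z) = \det(\lambda I - f(z))$ is a polynomial in $\lambda$ with holomorphic coefficients in $z$. Since Jordan--Chevalley is algebraic over the base field, $f_s$ arises as a polynomial in $f$ whose coefficients are rational functions of the coefficients of $p(\lambda, z)$, with denominators that are powers of the discriminant (which vanishes precisely on $B$). Concretely, one can produce $f_s$ via a finitely terminating Newton iteration $f_{k+1} = f_k - g(f_k)\,g'(f_k)^{-1}$, where $g(\lambda)$ is the squarefree part of $p(\lambda,z)$. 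Alternatively, and equivalently, bound the operator norm of $\pi_i(z)$ by a negative power of the minimum eigenvalue gap; by Newton--Puiseux this gap is bounded below by a positive power of $|z - p_0|$ on a punctured disk, yielding polynomial growth of $\phi_s$ near $p_0$ and hence meromorphic extension by Riemann's removable singularity theorem. Either way, $\phi_s$ and $\phi_n = \phi - \phi_s$ extend to meromorphic Higgs fields on all of $S$. Uniqueness now propagates from $S - B$ to $S$ by density: any other decomposition into commuting meromorphic semisimple and nilpotent Higgs fields must agree with $(\phi_s, \phi_n)$ pointwise on $S - B$ by pointwise Jordan--Chevalley uniqueness, and therefore globally as meromorphic sections.
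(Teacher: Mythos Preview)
Your argument is correct, but the route differs from the paper's. The paper bypasses the extension problem entirely by applying the Jordan--Chevalley decomposition once, over the field $F$ of meromorphic functions on $S$: for any meromorphic vector field $V$, decompose the $F$-linear endomorphism $\phi(V)$ of the $F$-vector space of meromorphic sections of $E$, and observe that the result is $F$-linear in $V$ and hence comes from meromorphic $1$-forms $\phi_s,\phi_n$. Holomorphicity on $S-B$ and agreement with the pointwise decomposition then follow by restricting and invoking canonicity.

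Your pointwise-then-extend strategy also works, and the contour-integral construction of the spectral projectors is a nice way to see holomorphicity on $S-B$. One caution: your parenthetical that the denominators are ``powers of the discriminant (which vanishes precisely on $B$)'' is not right when $\phi$ is not generically regular semisimple --- the discriminant of the characteristic polynomial then vanishes identically, so it cannot serve as the denominator. Your Newton iteration with the squarefree part $g$, or the Puiseux bound on the gap between \emph{distinct} eigenvalues, does repair this; but note that the Newton iteration argument, once you track what field the coefficients of $g$ and of $g'(f_k)^{-1}$ live in, is really the function-field Jordan--Chevalley decomposition in disguise. The paper's phrasing makes this transparent and avoids the local growth estimate altogether; your approach, on the other hand, gives a more explicit handle on the order of the poles along $B$, which could be useful if one later wants quantitative control there.
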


\begin{proof}
For any meromorphic vector field $V$, apply the Jordan-Chevalley decomposition over the field $F$ of meromorphic functions on $S$ to the endomorphism $\phi(V)$ to give a decomposition of endomorphisms $\phi(V) = \phi^V_{s} + \phi^V_n$. Since the decomposition is linear over $F$, $\phi^V_{s} = \phi_s(V)$ for a possibly meromorphic 1-form $\phi_s$, and similarly for $\phi^V_n$.

If $U$ is a contractible open subset of $S$ disjoint from the critical set, then by the canonicity of the Jordan-Chevalley decomposition, $\phi_s$ restricted to $U$ is the semisimple part of $\phi$ restricted to $U$. On the other hand we can decompose $E = \oplus_{i=1}^m E_i$ into generalized eigenspaces of $\phi$ over $U$, with corresponding eigen-$1$-forms $\alpha_i$ and set $\phi_s^U = \sum \alpha_i \mathrm{Id}_{E_i}$. Since $\phi_s^U$ is clearly part of a Jordan-Chevalley decomposition of $\phi$ over $U$, we must have $\phi_s^U = \phi_s$ restricted to $U$. This proves the last statement of the proposition.
\end{proof}

    \begin{remark}
    If we naively take the semisimple part over each fiber, the resulting object need not even be meromorphic. Indeed, take the endomorphism $$f=\begin{pmatrix} 0 & z \\ 1 & 0 \end{pmatrix}$$ of the trivial rank $2$ bundle over $\C$. The critical set is just the point $0$. Note $\phi=\phi_s$ is generically semisimple, but not semisimple everywhere. 
    \end{remark}

\begin{remark}
    The components $\phi_s$ and $\phi_n$ can have poles. Consider for example the endomorphism 
    $$f = 
    \begin{pmatrix}
    0 & 1 & 0 \\ 
    0 & 0 & 1 \\
    0 & 0 & z \end{pmatrix}
    $$
    defined on the trivial bundle over $\C$. Its Jordan-Chevalley decomposition is
    $$ f_s + f_n =
    \begin{pmatrix}
    0 & 0 & 1/z \\ 
    0 & 0 & 1 \\
    0 & 0 & z \end{pmatrix}
    +
    \begin{pmatrix}
    0 & 1 & -1/z \\ 
    0 & 0 & 0 \\
    0 & 0 & 0 \end{pmatrix}.
    $$
\end{remark}
\begin{remark}
    The Higgs bundles $(E,\overline{\partial}_E,\phi_s)$ and $(E,\overline{\partial}_E,\phi_n)$ are not parabolic or wild Higgs bundles in the sense of \cite{S} or \cite{BBo}, since the spectral data is bounded.
\end{remark}

\subsection{The toral bundle}
Let $(E, \dbar_E, \phi)$ be a Higgs bundle on a surface $S$, and let $\phi_s$ be the semisimple part of $\phi$ over $S-B$. Since the center of the centralizer of an endomorphism $f$ is unchanged if we scale $f$, it makes sense to define $ZC(\phi_s)$ at each point of $S-B$.

\begin{defn}
    The toral bundle $F_\phi$ of the Higgs bundle $(E,\dbar_E,\phi)$ is the sub-bundle of $\mathrm{End}(E)$ over $S-B$ with fibers $ZC(\phi_s)$.
\end{defn}

The toral bundle is locally spanned by the projections $\pi_i$ of Proposition \ref{pispan}, and inherits the real structure $\dagger$ of Definition \ref{defn: real toral} and complex metric. We denote the corresponding real sub-bundle by $\R F_\phi$ and call it the real toral bundle. We emphasize that the real structure and metric on the toral bundle are independent of a choice of hermitian metric $h$ on $E$. On the other hand, given $h$ we get a map $\mathrm{Re}_h: F_\phi \to End_h(E)$ sending $s \mapsto \frac{1}{2}(s + s^{*_h}).$ $F_\phi$ admits a natural flat connection and flat metric under which $\pi_i$ are flat. The derivative of the section $s$ is given by
\[
ds = \partial s + \dbar s= \sum_i ds_i \pi_i.
\]

Given an ordering $E = \oplus_{i=1}^m E_i$ of the eigenbundles of $\phi_s$, we can apply the Schur decomposition to $\phi$ to write $\phi = \phi_a + \phi_u = \phi_a + (\phi_s)_u + \phi_n$. In general one can only find such a global ordering after pulling back to the cameral cover (see section \ref{sec: cameral covers}). Nonetheless, even if such a decomposition is not possible, we emphasize that $|\phi_a|^2$ and $|\phi_u|^2$ are globally defined smooth $(1,1)$-forms. If $\phi$ in $S_n(d,A)$, the Euclidean norm of $\phi_a$ is bounded above by $\sqrt{n}Ad.$

\subsection{Mochizuki's work and consequences}
Take note of the result below, which is contained in many sources (for example, \cite{Mo}) and proved using estimates of Simpson. We use it in the proof of Proposition \ref{asymdecoupling} below, and at a number of other points in the paper.
\begin{prop}[Proposition 2.1 from \cite{Mo}]\label{firstbound} Let $\phi$ be a Higgs field on $D(1)$ and $h$ a harmonic metric for $\phi$. On $D(r)$, there exists $C=C(n,r)$ such that $$|\phi|_{h}\leq C\max_{z\in D(1)}|\phi_{a}(z)|_{h}+C.$$
\end{prop}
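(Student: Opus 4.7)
The plan is to derive a Keller--Osserman type semilinear inequality for $|\phi|_h^2$ and conclude via a barrier comparison, following the strategy originating with Simpson.

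First, I would establish a Bochner-type inequality. A standard computation of $\sqrt{-1}\partial\bar\partial|\phi|_h^2$ for the holomorphic section $\phi$ of $\mathrm{End}(E)\otimes\mathcal{K}$, combined with Hitchin's equation $F(h) = -[\phi,\phi^{*_h}]$, yields
\[
\Delta|\phi|_h^2 \;\geq\; c_1\,|[\phi,\phi^{*_h}]|_h^2
\]
on $D(1)$, where $\Delta$ is the flat Laplacian and $c_1 = c_1(n) > 0$. The positive sign of the right-hand side comes from identifying $\langle[F(h),\phi],\phi\rangle_h$ with $|[\phi,\phi^{*_h}]|_h^2$ after substituting Hitchin's equation, using that $[\phi,\phi^{*_h}]$ is anti-self-adjoint.

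Next, applying Proposition~\ref{SMprop} fiberwise gives $|\phi_u|_h^2 \leq C|[\phi,\phi^{*_h}]|_h$, and since $|\phi|_h^2 = |\phi_a|_h^2 + |\phi_u|_h^2$, this lets us bound the right-hand side of the Bochner inequality below by $c_2\,(|\phi|_h^2 - |\phi_a|_h^2)^2$. Setting $M := \max_{z\in D(1)}|\phi_a(z)|_h^2$, the nonnegative continuous function $v := \max(|\phi|_h^2 - M,\,0)$ then satisfies the semilinear inequality
\[
\Delta v \;\geq\; c\,v^2
\]
on $D(1)$ in the sense of distributions.

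Finally, I would apply a barrier comparison. For $r < r' < 1$, the explicit function $\psi(z) := K\bigl((r')^2 - |z|^2\bigr)^{-2}$ on $D(r')$ satisfies $\Delta\psi \geq c\,\psi^2$ for $K = K(c,r')$ large enough, and blows up along $\partial D(r')$. The classical comparison principle for $\Delta u \geq c u^2$ then yields $v \leq \psi$ on $D(r')$. Restricting to $D(r)$ gives a uniform bound $v \leq C(n,r)$, hence $|\phi|_h^2 \leq M + C(n,r)$; taking square roots produces the stated inequality (with perhaps a different constant). The main technical obstacle is the Bochner identity in the first step, where one must carefully handle the Chern curvatures on $\mathrm{End}(E)$ and $\mathcal{K}$ and track signs so that the Hitchin-equation substitution produces a favorable positive term; the remaining ingredients, namely the Simpson-type linear algebraic estimate and the Keller--Osserman barrier, are standard.
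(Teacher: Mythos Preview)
The paper does not supply its own proof of this proposition; it is quoted from \cite{Mo} with the remark that it is ``proved using estimates of Simpson.'' Your outline is essentially that classical Simpson argument, and it is correct. It is also exactly parallel to the paper's proof of Theorem~B, which applies the same three ingredients---a Bochner/Simpson inequality, the linear-algebra estimate of Proposition~\ref{SMprop}, and a Keller--Osserman barrier---to $\phi_n$ in place of $\phi$. The only stylistic difference is that the paper (following \cite{Mo}) works with the logarithmic form of Simpson's inequality, Lemma~\ref{lem: origsim}, namely $\partial_z\partial_{\bar z}\log|f|_h^2 \geq |[f,f^{*_h}]|_h^2/|f|_h^2$, and the corresponding barrier of Proposition~\ref{maxprinc}, whereas you work with $\Delta|\phi|_h^2$ directly. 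Either version goes through.

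One small slip: in the barrier step you need $\psi$ to be a \emph{supersolution}, i.e.\ $\Delta\psi \leq c\,\psi^2$, not $\Delta\psi \geq c\,\psi^2$ as written. Your choice $\psi = K\bigl((r')^2-|z|^2\bigr)^{-2}$ with $K$ large does give a supersolution (since $c\psi^2$ scales like $K^2$ while $\Delta\psi$ scales like $K$), so the conclusion $v\leq\psi$ is correct and this is just a typo in the direction of the inequality.
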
 
A nice feature of the proposition above is that, unlike Theorem \ref{GRSdecoupling} or Theorem B, the estimate does not depend on distance to the critical set. 

Recall Definition \ref{dfn:S(d,A)}: a Higgs field $\phi = fdz$ on the trivial holomorphic bundle over $D(1)$ is in $S_n(d,A)$ if the gap between distinct eigenvalues of $f$ is everywhere at least $d$ and all eigenvalues are bounded by $Ad$. 

One of the main ingredients toward Theorem B is Mochizuki's asymptotic orthogonality theorem (Theorem \ref{GRSdecoupling} above).
We deduce from this the following proposition.
 \begin{prop}\label{asymdecoupling}
     Let $\phi \in S_n(d,A)$. Write $\phi = f dz$, with Jordan-Chevalley decomposition $f = f_s + f_n$. Fixing $r<1,$ on $D(r)$, there are constants $C=C(n,r,A)>0$ and $c=c(n,r,A)>0$ such that
     \begin{equation}\label{commutatorbounds}
         |[f_s,f_s^{*_h}]|_{h}, \hspace{1mm} |[f_{n},f_s^{*_h}]|_{h}, \hspace{1mm}|[f_s,f_n^{*_h}]|_{h} \leq Ce^{-cd}
     \end{equation}
and 
\begin{equation} \label{commutatorbounds 2}
    |F(h)+[f_n,f_n^{*_h}]|_{h}\leq Ce^{-cd}.
\end{equation}
 \end{prop}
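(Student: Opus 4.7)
The plan is to deduce the commutator estimates from Mochizuki's almost-orthogonality Theorem \ref{GRSdecoupling}, using the linear-algebraic machinery of Section 2. First I would apply Theorem \ref{GRSdecoupling} to get that on some intermediate disk $D(r') \subset D(1)$ with $r < r' < 1$, the generalized eigenspace decomposition $E = \oplus_{i=1}^m E_i$ is $\epsilon$-almost orthogonal for $h$, with $\epsilon = Ce^{-cd}$. Next I'd establish preliminary size bounds on $D(r)$: since $\phi \in S_n(d,A)$ the eigenvalues $\alpha_1,\dots,\alpha_m$ of $f$ satisfy $|\alpha_i| \leq Ad$, so $|f_a|_h \leq \sqrt{n}Ad$, and Proposition \ref{firstbound} then gives $|f|_h \leq Cd$. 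Writing $f_s = \sum_i \alpha_i \pi_i \in ZC(f_s)$, Lemma \ref{lem: star and dagger} produces $|f_s^{*_h} - f_s^\dagger|_h \leq Cd\epsilon$ and $|(f_s)_u|_h \leq Cd\epsilon$, whence $|f_s|_h \leq Cd$ and finally $|f_n|_h \leq |f|_h + |f_s|_h \leq Cd$.

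The key idea for the three commutator bounds in \eqref{commutatorbounds} is to replace $*_h$ by the abelian-algebra involution $\dagger$, producing a commutator that vanishes exactly, with error controlled by almost orthogonality. Concretely, $f_s$ and $f_s^\dagger = \sum_i \overline{\alpha_i}\pi_i$ both lie in the commutative algebra $ZC(f_s)$, so $[f_s, f_s^\dagger] = 0$; hence $[f_s, f_s^{*_h}] = [f_s, f_s^{*_h} - f_s^\dagger]$, whose norm is at most $2|f_s|_h \cdot Cd\epsilon \leq Cd^2 e^{-cd}$, absorbing into $Ce^{-c'd}$ by the paper's conventions. For $[f_s, f_n^{*_h}]$, I would observe that $f_n$ preserves each generalized eigenspace and so commutes with every $\pi_i$; taking adjoints gives $[\pi_i^{*_h}, f_n^{*_h}] = 0$, so $[\pi_i, f_n^{*_h}] = [\pi_i - \pi_i^{*_h}, f_n^{*_h}]$, which has norm at most $Cn\epsilon \cdot |f_n^{*_h}|_h \leq Cd\epsilon$ by Lemma \ref{lem:almostorthogonal1}(2) combined with the definition of almost orthogonality. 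Summing against the $\alpha_i$ yields $|[f_s, f_n^{*_h}]|_h \leq \sum_i |\alpha_i| \cdot Cd\epsilon \leq Cd^2\epsilon \leq Ce^{-c'd}$, and the bound on $|[f_n, f_s^{*_h}]|_h$ follows by adjunction, using $[A,B]^{*_h} = -[A^{*_h}, B^{*_h}]$ and $|A^{*_h}|_h = |A|_h$.

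The final estimate \eqref{commutatorbounds 2} is a formal consequence of Hitchin's equation $F(h) + [\phi,\phi^{*_h}] = 0$. Expanding $\phi = (f_s + f_n)\,dz$ bilinearly gives
$$F(h) + [f_n, f_n^{*_h}]\,dz \wedge d\bar{z} = -\bigl([f_s, f_s^{*_h}] + [f_s, f_n^{*_h}] + [f_n, f_s^{*_h}]\bigr)\,dz \wedge d\bar{z},$$
so the left-hand side has norm at most the sum of the three commutators just controlled, which gives \eqref{commutatorbounds 2}. No individual step looks like a serious obstacle; the conceptual heart of the argument is Mochizuki's almost orthogonality together with the observation that the real structure $\dagger$ on $ZC(f_s)$ linearizes each commutator around a vanishing configuration. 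The one minor point to track is that each bound initially carries an extra polynomial factor of $d$, which is harmless thanks to the paper's convention for absorbing such factors into the exponential.
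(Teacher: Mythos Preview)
Your proposal is correct and follows essentially the same approach as the paper's proof: almost orthogonality (Theorem \ref{GRSdecoupling}) plus Lemma \ref{lem: star and dagger} to replace $*_h$ by $\dagger$, the linear-in-$d$ bounds on $|f_s|_h$ and $|f_n|_h$ via Proposition \ref{firstbound}, and then Hitchin's equation for \eqref{commutatorbounds 2}. The only cosmetic difference is that the paper handles the mixed commutator by bounding $[f_n,f_s^{*_h}]$ directly from $[f_n,f_s^\dagger]=0$ (invoking $ZC(f_s)\subset C(f_n)$), whereas you bound the adjoint $[f_s,f_n^{*_h}]$ by expanding $f_s=\sum_i\alpha_i\pi_i$ and using $[\pi_i^{*_h},f_n^{*_h}]=0$; these are the same idea viewed from opposite sides.
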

\begin{proof}
Fix an ordering of the generalized eigenspaces, and let $f = f_a + ({f_s})_u + f_n$ be the Jordan-Chevalley-Schur decomposition relative to this ordering. By Lemma \ref{lem: star and dagger} and Theorem \ref{GRSdecoupling}, we have $|f_s^* - f_s^\dagger|_h \leq \sqrt{2n}Ce^{-cd}|f_a|_h$. Since $\phi \in S_n(d,A)$, we have $|f_a|_h \leq \sqrt{n}dA$; hence there is a new constant $C = C(n, r, A)$ such that $|f_s^* - f_s^\dagger|_h \leq C e^{-cd}$.

On the other hand, by the definition of $f_s^\dagger$ (section \ref{DCthm}) we have $[f_s, f_s^\dagger] = 0$, and from the commutativity condition of the Jordan-Chevalley decomposition we also have $[f_n, f_s^\dagger] = 0$. Together with the previous bound this gives
\[
|[f_s,f_s^*]|_h = |[f_s,f_s^* - f_s^\dagger]|_h \leq Ce^{-cd}|f_s|_h
\]
and similarly
\begin{equation} \label{fn and fss}
|[f_n,f_s^*]|_h \leq Ce^{-cd}|f_n|_h.
\end{equation}
Next, we show that we have bounds $|f_s|_h \leq C d$ and $|f_n|_h \leq C (d+1)$ for some constant $C$, so that we can absorb them into the exponential decay at the cost of redefining $C$ as before. For the first one,
by Lemma \ref{lem: star and dagger}, we have
\[
|f_s|_h^2 = |f_a|_h^2 + |(f_s)_u|_h^2 \leq (1 + C^2e^{-2cd})|f_a|_h^2 \leq (1+C^2)(nA^2d^2).
\]

The linear-in-$d$ bound on $|f_n|_h$ follows from this together with the linear-in-$d$ bound on $|f|_h$ in Proposition \ref{firstbound} by the triangle inequality (see the remark below).

The third inequality on the first line is immediate as $[f_s, f_n^*]$ is just the adjoint of $[f_n, f_s^*]$. Finally the inequality $|F(h)+[f_n,f_n^{*_h}]|_{h}\leq Ce^{-cd}$ follows by plugging in the Jordan-Chevalley decomposition of $f$ into Hitchin's equation $F(h) + [f, f^*] = 0$ and applying the first three inequalities.
\end{proof}

\begin{remark}
    We could have avoided using Proposition \ref{firstbound} in the proof of \eqref{commutatorbounds 2} by using the stronger constant-in-$d$ bound in Theorem $B$ instead. Indeed, the proof of Theorem $B$ only uses the weaker equation \eqref{fn and fss}, so the reasoning is not circular.
\end{remark}

\section{Boundedness of the nilpotent part}\label{ThmAsection} 
In this section we prove Theorem B, giving an interior bound for the norm of the nilpotent part of a Higgs field $\phi$ in $S_n(d,A)$ in terms of $A$.
Throughout the section, let $(E,\overline{\partial}_E)$ be the trivial rank $n$ bundle over $D(1)$ and $(E,\overline{\partial}_E,\phi,h)$ a harmonic bundle, with $\phi \in S_n(d,A),$ and fix $r < 1$. Let $r_1 = (1+r)/2$, so we have $D(r)\subset D(r_1) \subset D(1).$ Write $\phi = f dz$, with Jordan-Chevalley decomposition $f = f_s + f_n$.

Our proof of Theorem B has the same starting point as the proof of the asymptotic orthogonality theorem in \cite{Mo} and also, in the case $s = \phi$, of Simpson's paper \cite[page 729]{S}, namely the following lemma.

\begin{lem} \cite[Lemma 2.4]{Mo} \label{lem: origsim}
Let $(E,\phi,h)$ be a harmonic bundle on the disk, with $\phi = fdz$. If $s$ is a section of $\mathrm{End}(E)$ with $[s,f] = 0 $, then
\begin{equation}\label{origsim}
    \partial_z\partial_{\overline{z}}\log |s|_{h}^2\geq \frac{|[s,f^{*_h}]|_h^2}{|s|_h^2}.
\end{equation}
\end{lem}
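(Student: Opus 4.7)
The plan is to apply a Bochner--Weitzenb\"ock identity on $\mathrm{End}(E)$ to the (holomorphic, which is the natural hypothesis here) section $s$, convert the resulting curvature term via Hitchin's equation into a nested commutator, and use the Jacobi identity together with the adjointness $h([f,X],Y) = h(X,[f^{*_h},Y])$ to recognize that curvature term as a nonnegative multiple of $|[s,f^{*_h}]|_h^2$. Cauchy--Schwarz then bridges the gap between $|s|^2$ and $\log|s|^2$.

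Concretely, since $\bar\partial s = 0$, we have $\partial_{\bar z}|s|_h^2 = h(s,\nabla_z s)$; differentiating once more and using that the commutator $[\nabla_{\bar z},\nabla_z]$ acts on the $\mathrm{End}(E)$-valued section $s$ as $-[F(h)_{z\bar z},\,\cdot\,]$ yields
\begin{equation*}
\partial_z\partial_{\bar z}|s|_h^2 \;=\; |\nabla_z s|_h^2 \,-\, h\bigl(s,[F(h)_{z\bar z},s]\bigr).
\end{equation*}
Hitchin's equation together with $\phi = f\,dz$ gives $F(h)_{z\bar z} = -[f,f^{*_h}]$. The assumption $[s,f]=0$ and the Jacobi identity collapse $[[f,f^{*_h}],s]$ to $[f,[f^{*_h},s]]$, and one application of the adjointness above (with $X = [f^{*_h},s]$, $Y=s$) converts the curvature contribution into
\begin{equation*}
-h\bigl(s,[F(h)_{z\bar z},s]\bigr) = h\bigl([f,[f^{*_h},s]],s\bigr) = \bigl|[s,f^{*_h}]\bigr|_h^2,
\end{equation*}
so that $\partial_z\partial_{\bar z}|s|_h^2 = |\nabla_z s|_h^2 + |[s,f^{*_h}]|_h^2$.

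To conclude, I use the standard identity
\begin{equation*}
\partial_z\partial_{\bar z}\log|s|_h^2 \;=\; \frac{\partial_z\partial_{\bar z}|s|_h^2}{|s|_h^2} - \frac{|h(\nabla_z s,s)|^2}{|s|_h^4},
\end{equation*}
and Cauchy--Schwarz $|h(\nabla_z s,s)|^2 \le |\nabla_z s|_h^2\,|s|_h^2$ kills the $|\nabla_z s|_h^2$ contribution and leaves exactly $|[s,f^{*_h}]|_h^2/|s|_h^2$. The only delicate point in the argument is the bookkeeping of signs and conjugations in the initial Bochner identity on $\mathrm{End}(E)$; once that is set up correctly, the rest is a purely algebraic unpacking driven by Hitchin's equation and the Jacobi identity.
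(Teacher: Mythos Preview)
Your argument is correct. The paper does not actually supply a proof of this lemma; it cites \cite[Lemma 2.4]{Mo} and only adds the remark that, for non-vanishing $s$, the inequality is a consequence of the fact that curvature is decreasing in holomorphic sub-bundles. Your direct Bochner computation is precisely the explicit unpacking of that remark: the line sub-bundle $\C s \subset \mathrm{End}(E)$ has curvature $-\partial_z\partial_{\bar z}\log|s|_h^2$, and comparing it with the restriction of the curvature of $\mathrm{End}(E)$ (which Hitchin's equation rewrites as $[[f,f^{*_h}],\cdot]$) yields exactly your inequality after the Jacobi/adjointness step. So the two approaches are the same in content; yours simply makes the mechanism visible.

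One point worth flagging: the lemma as stated in the paper says only ``section,'' but your proof (and Mochizuki's) needs $s$ holomorphic, which you correctly identify as the natural hypothesis. All applications in the paper are to holomorphic $s$ (namely $f_n$ and the projections $\pi_i$), so this is harmless, but it is an omission in the statement rather than in your argument.
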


\begin{remark}
When $s$ is non-vanishing, (\ref{origsim}) is a consequence of the fact that curvature is decreasing in holomorphic sub-bundles.
\end{remark}

\begin{thm}[Theorem B]
 Let $\phi \in S_n(d,A)$ with $\phi=fdz$ and let $h$ be a harmonic metric for $\phi$. On any subdisk $D(r)\subset D(1)$, there exists $C=C(n,r,A)>0$ such that
$$|\phi_n|_h\leq C.$$
\end{thm}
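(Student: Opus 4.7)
The plan is to apply Lemma \ref{lem: origsim} with $s = f_n$, combine the resulting inequality with Proposition \ref{SMprop} applied to $f_n$ itself and the commutator bounds of Proposition \ref{asymdecoupling}, and then run an Ahlfors--Schwarz style barrier argument on a slightly larger subdisk $D(r_1) \supset D(r)$. The key observation that makes this work is that $f_n$ is itself nilpotent, so Proposition \ref{SMprop} bounds the full norm $|f_n|_h^2$ by $|[f_n, f_n^{*_h}]|_h$, not merely its strict upper triangular part.

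Concretely, $f_n$ commutes with $f = f_s + f_n$ by the defining property of the Jordan--Chevalley decomposition, and $f_n$ is holomorphic on $D(1)$ since $\phi \in S_n(d,A)$ has empty critical set. Lemma \ref{lem: origsim} therefore yields, pointwise away from the zeros of $f_n$,
\[
    \partial_z \partial_{\bar z} \log|f_n|_h^2 \;\geq\; \frac{|[f_n, f^{*_h}]|_h^2}{|f_n|_h^2}.
\]
Splitting $[f_n, f^{*_h}] = [f_n, f_s^{*_h}] + [f_n, f_n^{*_h}]$, the first commutator is bounded by $C e^{-cd}$ via \eqref{commutatorbounds} in Proposition \ref{asymdecoupling}. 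For the second, Proposition \ref{SMprop} applied to $f_n$ gives $|f_n|_h^2 \leq C(n)\,|[f_n, f_n^{*_h}]|_h$, since nilpotence forces $(f_n)_a = 0$ in every Schur basis and hence $f_n = (f_n)_u$. The reverse triangle inequality then produces a dichotomy: for constants $K = K(n,r,A)$ and $c_0 = c_0(n)>0$, at each point of $D(1)$ either $|f_n|_h^2 \leq K e^{-cd}$, or
\[
    \partial_z \partial_{\bar z} \log|f_n|_h^2 \;\geq\; c_0 \, |f_n|_h^2.
\]

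To conclude, I would compare $\log|f_n|_h^2$ with the barrier $\psi(z) = \log\!\bigl(M/(r_1^2 - |z|^2)^2\bigr)$, where $M = M(c_0, r_1)$ is tuned so that $\partial_z \partial_{\bar z} \psi = c_0 e^\psi$ on $D(r_1)$. Since $\psi \to +\infty$ at $\partial D(r_1)$ and $\log|f_n|_h^2 \to -\infty$ at zeros of $f_n$, any positive supremum of $\log|f_n|_h^2 - \psi$ over $D(r_1)$ must be attained at an interior point where $f_n$ does not vanish. There, the Liouville alternative combined with the barrier equation rules out a maximum, while the small alternative forces $|f_n|_h^2 \leq K e^{-cd}$, which lies below $\inf_{D(r_1)} e^\psi$ once $d$ exceeds a threshold $d_0 = d_0(n,r,A)$. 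For $d \leq d_0$, Proposition \ref{firstbound} combined with the linear-in-$d$ control of $|f_s|_h$ established inside the proof of Proposition \ref{asymdecoupling} already yields $|f_n|_h \leq |f|_h + |f_s|_h \leq C(n,r,A)$. In either case one obtains the desired bound on $D(r)$.

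The main obstacle is making every constant genuinely independent of $d$. This rests on the exponential smallness of $|[f_n,f_s^{*_h}]|_h$ from Proposition \ref{asymdecoupling} beating the a priori linear-in-$d$ sizes of $|f_s|_h$ and $|f_n|_h$, which is precisely what yields the clean large/small dichotomy above and cleanly separates the asymptotic contribution of $[f_n,f_n^{*_h}]$. A minor technical point is the possible vanishing of $f_n$, which makes $\log|f_n|_h^2$ non-smooth, but since $\log|f_n|_h^2 - \psi \to -\infty$ at such zeros this causes no trouble for the maximum principle comparison.
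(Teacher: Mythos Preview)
Your proposal is correct and follows essentially the same strategy as the paper: apply Lemma~\ref{lem: origsim} to $s=f_n$, invoke Proposition~\ref{SMprop} on the nilpotent $f_n$ to get $|f_n|_h^2 \leq C|[f_n,f_n^{*_h}]|_h$, control the cross term $[f_n,f_s^{*_h}]$ via Proposition~\ref{asymdecoupling}, and finish with an Ahlfors--Schwarz barrier on $D(r_1)$. The paper streamlines your dichotomy by using the \emph{relative} estimate \eqref{fn and fss}, namely $|[f_n,f_s^{*_h}]|_h \leq Ce^{-cd}|f_n|_h$, rather than the absolute bound from \eqref{commutatorbounds}; this produces directly the single inequality $\partial_z\partial_{\bar z}\log|f_n|_h^2 \geq C_0^{-2}|f_n|_h^2 - C_1^2e^{-2cd}$ on $D(r_1)$, so that Proposition~\ref{maxprinc} applies uniformly in $d$ with no case split and no separate treatment of small $d$.
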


\begin{proof}
We begin by applying Lemma \ref{lem: origsim} to the nilpotent part $f_n$ of the Higgs field, with the aim of showing that it is a subsolution of an elliptic equation.

\begin{align*}
        \partial_z \partial_{\overline{z}} \log |f_n|^2 &\geq \frac {|[f_n,f^*]|^2}{|f_n|^2} \\
        &= \frac{|[f_n, f_n^*] + [f_n, f_s^*]|^2}{|f_n|^2} \\
        &\geq \frac{|[f_n, f_n^*]|^2}{2|f_n|^2} - \frac{|[f_n, f_s^*]|^2}{|f_n|^2}.
\end{align*}
By Proposition \ref{SMprop}, the first term is bounded below by $C_0^{-2} |f_n|^2$ for some constant $C_0$ depending only on $n$. For the second term, we apply equation \eqref{fn and fss} in the proof of Proposition \ref{asymdecoupling}. The result is that on $D(r_1)$, 
\[
\partial_z \partial_{\overline{z}} \log |f_n|^2 \geq C_0^{-2}|f_n|^2 - C_1^2e^{-2cd}. 
\]

Finally we conclude by Proposition \ref{maxprinc} below that $|f_n| \leq \frac{C(1 + C_2e^{-2cd})}{(1-(r/r_1)^2)},$
which is clearly bounded above independently of $d$.
\end{proof}

\begin{prop} \label{maxprinc}
 Let $u:\overline{D(0,r)}\to [0,\infty)$ be a $C^2$ function that when non-zero satisfies 
    \begin{equation}\label{assumeddifineq}
        r^2\partial_z\partial_{\overline{z}}\log u \geq C^2_1u^2-C^2_2.
    \end{equation}
Then $u(z) \leq \frac{C_2 + 1}{C_1} \frac{r^2}{r^2 - |z|^2}$.
\end{prop}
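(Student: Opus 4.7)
The strategy is a standard Ahlfors–Schwarz comparison argument against the Poincaré-type metric on the disk. I would use the comparison function
\[
v_K(z) = \frac{K r^2}{r^2 - |z|^2}, \qquad K = \frac{C_2+1}{C_1},
\]
whose key property, obtained by a direct computation, is
\[
r^2 \partial_z \partial_{\bar z} \log v_K(z) = \frac{v_K(z)^2}{K^2}.
\]
Thus $v_K$ is an \emph{exact} solution of a sharper analogue of the same type of differential inequality, and the goal becomes showing $u \leq v_K$ on $D(0,r)$.

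The first move is to assume (for contradiction) that $M := \sup_{D(0,r)} u/v_K > 1$, noting that $u \equiv 0$ is trivial. Because $v_K(z) \to \infty$ as $|z| \to r$ while $u$ is bounded on $\overline{D(0,r)}$, the ratio $u/v_K$ tends to $0$ at the boundary, so $M$ is attained at some interior point $z_0$. At $z_0$ we have $u(z_0) = M v_K(z_0) > v_K(z_0) \geq K > 0$, so $u > 0$ in a neighborhood of $z_0$ and $\log u$ is a smooth function there.

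The heart of the argument is the standard second-derivative test applied to $\log(u/v_K)$, which has an interior maximum at $z_0$. This gives $\partial_z\partial_{\bar z}\log u(z_0) \leq \partial_z\partial_{\bar z} \log v_K(z_0)$. Multiplying by $r^2$ and combining the hypothesis \eqref{assumeddifineq} with the computation above yields
\[
C_1^2 u(z_0)^2 - C_2^2 \;\leq\; \frac{v_K(z_0)^2}{K^2} \;\leq\; \frac{u(z_0)^2}{K^2},
\]
where in the last step I used $v_K(z_0) \leq u(z_0)$. Rearranging gives $(C_1^2 - 1/K^2) u(z_0)^2 \leq C_2^2$, and with the specific choice $K = (C_2+1)/C_1$ one computes $K^2 C_1^2 - 1 = C_2^2 + 2 C_2$, so $u(z_0)^2 \leq K^2 C_2 / (C_2+2) < K^2$. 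But we already saw $u(z_0) > K$, a contradiction.

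I do not expect any serious obstacle — the computation of $\partial_z\partial_{\bar z}\log v_K$ and the arithmetic with $K$ are straightforward, and the regularity needed for the second-derivative test is ensured by the fact that $u$ is $C^2$ and strictly positive in a neighborhood of $z_0$. The one cosmetic wrinkle is the edge case $C_2 = 0$, where the inequality $K^2 C_1^2 - 1 > 0$ is not strict; it is handled by applying the above argument with $K$ replaced by any $K' > 1/C_1$ and then letting $K' \to 1/C_1$.
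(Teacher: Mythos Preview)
Your proof is correct and takes essentially the same approach as the paper: both use the identical comparison function $v(z) = \frac{C_2+1}{C_1}\frac{r^2}{r^2-|z|^2}$, verify the same identity $r^2\partial_z\partial_{\bar z}\log v = v^2/K^2$, and conclude by a maximum-principle/Ahlfors--Schwarz argument. The only difference is presentational: the paper checks that $v$ is a supersolution (i.e., $v^2/K^2 \leq C_1^2 v^2 - C_2^2$, using $v \geq K$) and then simply invokes the comparison principle, whereas you carry out the interior-maximum argument on $\log(u/v_K)$ explicitly and handle the $C_2=0$ edge case by a limit.
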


\begin{proof}
Let $v(z) = \frac{C_2 + 1}{C_1} \frac{r^2}{r^2 - |z|^2}$. A simple calculation shows that $v$ is a supersolution to \eqref{assumeddifineq}:
\begin{align*}
    r^2\partial_z\partial_{\overline{z}}\log v &= \frac{r^4}{(r^2 - |z|^2)^2} \\
    &\leq C_1^2 v^2 - C_2^2.
\end{align*}
Since $v$ tends to infinity on the boundary of $D(0,r)$, it is greater than or equal to any subsolution that is bounded on the closed disk, which is exactly what we wanted to show. We remark that although the equation degenerates when $u=0$, the inequality $u \leq v$ holds trivially at these points so the maximum principle argument still works.
\end{proof}

One important consequence of Theorem B is the uniform curvature bound.
\begin{cor}\label{localbound}
    There exists $C=C(n,r,A)$ independent of $d$ such that $|F(h)|_h\leq C.$
\end{cor}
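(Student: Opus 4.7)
The plan is to combine the commutator bound \eqref{commutatorbounds 2} from Proposition \ref{asymdecoupling} with the $d$-independent bound on $|\phi_n|_h$ provided by Theorem B. The equation \eqref{commutatorbounds 2} states
\[
|F(h) + [f_n, f_n^{*_h}]|_h \leq C_1 e^{-cd}
\]
on $D(r)$ (for possibly smaller $r$), so by the triangle inequality
\[
|F(h)|_h \leq C_1 e^{-cd} + |[f_n, f_n^{*_h}]|_h.
\]
Since $|[f_n, f_n^{*_h}]|_h \leq 2|f_n|_h^2$, Theorem B immediately yields a bound on the second term depending only on $n$, $r$, and $A$, and the first term is bounded by a constant (since $e^{-cd} \leq 1$ for $d \geq 0$; and for small $d$ one can absorb the factor into the constant). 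Adding the two estimates gives the desired uniform bound.

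There is essentially no obstacle here: the content is Theorem B, which controls $|\phi_n|_h$, and Proposition \ref{asymdecoupling}, which controls the cross terms. The only minor bookkeeping point is that to apply both results on the same disk $D(r)$, one should first pass to a slightly larger intermediate disk $D(r_1)$ (as in the proof of Theorem B) on which Proposition \ref{asymdecoupling} is invoked, and then shrink to $D(r)$; the constants $C$ and $c$ adjust accordingly, still depending only on $n$, $r$, and $A$.
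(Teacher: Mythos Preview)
Your proof is correct and essentially identical to the paper's own argument: apply Proposition \ref{asymdecoupling} to bound $|F(h)+[\phi_n,\phi_n^{*_h}]|_h$ by $Ce^{-cd}$, use Theorem B to bound $|\phi_n|_h$ uniformly in $d$, and conclude by the triangle inequality. The paper's proof is slightly terser but follows exactly the same logic.
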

\begin{proof}
    By Proposition \ref{asymdecoupling}, there exists $C=C(n,r,A),c=c(n,r,A)>0$ such that in $D(r)$, $$|F(h)+[\phi_n,\phi_n^{*_h}]|_{h}\leq Ce^{-cR}.$$ Using Theorem B, we have $|\phi_n|_{h}\leq C(n,r,A)$. We thus find that $$|F(h)|_{h}\leq C|\phi_n|_{ h}^2 + Ce^{-cR}\leq C.$$ 
\end{proof}

\section{Asymptotics of flat connections}
We now prove Theorem C and Corollary C about the asymptotics of flat connections. Throughout the section, we continue our assumptions from section \ref{ThmAsection}. That is, $(E,\overline{\partial}_E)$ is the trivial rank $n$ bundle over $D(0,r)$ and $(E,\overline{\partial}_E,\phi,h)$ is a harmonic bundle, with $\phi \in S_n(d,A).$ We have generalized  eigenspace decomposition $\oplus_{i=1}^m E_i$ for $\phi$ with projections $\pi_i:E\to E_i$, and the usual Jordan-Chevalley decomposition. As well, we fix a subdisk $D(r)\subset D(1).$ To simplify notation, we'll use $\overline{\partial}$ for $\overline{\partial}_E,$ and $\partial_h$ for $\nabla_h^{1,0},$ acting on sections of $\textrm{End}(E)$ and all associated bundles.  As in section \ref{ThmAsection}, set $r_1=\frac{1+r}{2}.$ 

Working on a subdisk $D(r)\subset D(1),$ for each $p\in (1,\infty)$ and $\alpha\in (0,1)$, let $|\cdot|_{h,L^p(D(r))}$ and  $|\cdot|_{h,C^{0,\alpha}(D(r))}$ be the $L^p,$ and $C^{0,\alpha}$-norms respectively on sections of $E|_{D(r)}$ and associated bundles, defined with respect to the metric $h$. Using the standard Euclidean connection on $E$, for each $k\in\mathbb{Z}_+$ and $p\in (1,\infty),$ we similarly set $|\cdot|_{h,W^{k,p}(D(r))}$ to be the $W^{k,p}$ Sobolev norm on the relevant spaces of sections. For $L^p, C^{0,\alpha},$ and $W^{k,p}$ norms of forms with values in a bundle, we use the same notation, implicitly using the flat metric to measure the norms of form components.

\subsection{Holomorphic radial frames}
When working in any frame, we will use $|\cdot|$ for the Euclidean norm with respect to the frame, and likewise $|\cdot|_{L^p(D(r))}$, $|\cdot|_{W^{k,p}(D(r))}$, etc., for $L^p$ spaces and such, dropping the $``h"$ in the subcript to distinguish our notation. The $W^{k,p}$-space is defined with respect to the Euclidean metric in the frame. 

For an arbitrary Hermitian holomorphic vector bundle, control on the curvature of the Chern connection implies that in a particularly nice choice of frame, the connection form and other quantities are uniformly bounded.
\begin{prop}\label{prop:boundinframes}
    Let $(E,h_0)$ be a Hermitian holomorphic vector bundle over $D(1)$ with Chern connection $\nabla$. For every $C>0,$ there exists $C'(n,C,r)>0$ such that if $|F(\nabla)|_{h_0}\leq C,$ then we can find a local holomorphic frame in $D(r)$ in which the norms of the connection forms, the matrix of $h_0$, and the inverse matrix are all bounded above by $C'.$
\end{prop}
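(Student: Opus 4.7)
The plan is to first find a smooth $h_0$-unitary frame on a slightly larger disk in which the Chern connection form is bounded, and then convert this frame into a holomorphic one by solving a $\dbar$-equation for a $GL(n,\C)$-valued gauge transformation. In the resulting holomorphic frame the matrix of $h_0$ will automatically be $g^* g$ for the gauge transformation $g$, so bounds on $g$ and $g^{-1}$ will control everything.

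Set $r_1=(1+r)/2$. For the first step, since $|F(\nabla)|_{h_0}\leq C$ pointwise on $D(1)$, the strategy is to cover $D(r_1)$ by finitely many subdisks of radius small enough that Uhlenbeck's smallness condition $\|F\|_{L^2}<\epsilon_0$ holds on each, apply Uhlenbeck's gauge-fixing theorem in dimension two to obtain a Coulomb gauge on each subdisk in which the connection form $A$ is bounded in $W^{1,p}$ for any $p<\infty$, and then patch these local Coulomb gauges together via transition functions which themselves satisfy good estimates. This will produce an $h_0$-unitary frame $u$ on $D(r_1)$ whose connection form has bounded $W^{1,p}$ norm, and hence by Sobolev embedding bounded $C^{0,\alpha}$ norm for some $\alpha>0$, with bounds depending only on $n$, $C$, and $r$.

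For the second step, we look for a new frame $\tilde e=u\cdot g$ with $g:D(r_1)\to GL(n,\C)$ smooth. The condition $\dbar_E \tilde e=0$ is equivalent to the equation $\dbar g = -A^{0,1}g$, where $A^{0,1}$ is the $(0,1)$-part of $A$ in the frame $u$. By standard $\dbar$-theory on the disk (Picard iteration built from the Cauchy transform, with covering and patching as needed to guarantee global invertibility of $g$ with bounded inverse), the equation admits a solution on $D(r)$ with $g(0)=I$ satisfying bounds on $|g|$, $|g^{-1}|$, and $|\partial g|$ depending only on $n$, $C$, and $r$. In the frame $\tilde e$ the matrix of $h_0$ is $g^*g$, which is bounded with bounded inverse, and the Chern connection form is $(g^*g)^{-1}\partial(g^*g)$, also bounded in terms of $g$, $g^{-1}$, and $\partial g$.

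The main obstacle lies in the first step: the pointwise curvature bound on $D(1)$ gives only a crude $L^2$ bound on $F(\nabla)$ over $D(r_1)$, which is not small enough to apply Uhlenbeck's theorem in a single chart. The covering-and-patching construction needed to assemble the local Coulomb gauges into a single global unitary frame on $D(r_1)$ with uniform $W^{1,p}$ control on the connection form is technical but standard; once that is in place, the $\dbar$-solve and the verification in the new frame are routine.
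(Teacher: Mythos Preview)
Your approach is correct in outline, but it is significantly heavier than what the paper does. The paper avoids Uhlenbeck's theorem and the subsequent patching altogether by using a \emph{radial frame}: pick an $h_0$-orthonormal frame at the center and parallel-transport it along the rays $\{\theta=\mathrm{const}\}$. By construction the connection form satisfies $\Omega(\partial_r)=0$, and integrating the curvature along the rays gives a pointwise bound on $\Omega(\partial_\theta)$ directly from the pointwise bound on $F(\nabla)$. No smallness hypothesis, no covering, no patching. From there the second step is the same as yours: one runs the Koszul--Malgrange argument (as in \cite[section 2.2.1]{DK}) to solve the $\dbar$-equation and pass to a holomorphic frame, with the connection form still controlled. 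Finally the paper bounds $H_0$ and $H_0^{-1}$ not via $g^*g$ but by observing that $\Omega=H_0^{-1}\partial H_0$, so $H_0$ is determined by its value at the center and an ODE along paths with bounded coefficients.

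What each approach buys: your Uhlenbeck route gives stronger regularity ($W^{1,p}$, hence $C^{0,\alpha}$) on the connection form at the cost of invoking a nontrivial gauge-fixing theorem and a genuine patching argument that you correctly flag as the main obstacle. The paper's radial gauge is entirely elementary and gives exactly the $C^0$ control needed here in a few lines; since the proposition only asks for pointwise bounds, this is the more economical choice.
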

\begin{proof}
We first construct a ``radial frame," by which we mean a frame obtained by fixing a disk around a point with polar coordinates $(r,\theta)$, and parallel transporting that frame at the center along the rays $\{\theta =\textrm{constant}\}.$ If $\Omega$ is the connection form, then by construction, $\Omega(\partial_r)=0,$ and $\Omega(\partial_\theta)$ is controlled by the curvature (see \cite[section 2.2.1]{DK}).

 Following the procedure from the proof of the Koszul-Malgrange theorem, as explained in \cite[section 2.2.1]{DK}, after passing to $D(r)$ one can find a gauge transformation that transforms our radial frame to a holomorphic frame, and in which the norm of $\Omega$ is uniformly controlled in terms of the norm in the previous frame. 
 
 Set $H_0$ to be the matrix representing $h_0$. Since $\Omega=H_0^{-1}\partial H_0,$ $H_0$ is determined by its entries at the center and parallel transport using $\Omega$, and hence $H_0$ is uniformly bounded.  The same reasoning applies to $H_0^{-1}.$ 
\end{proof}

\begin{defn}
   We refer to a frame constructed in the fashion above as a holomorphic radial frame.
\end{defn}

Making use of a holomorphic radial frame, we can prove that the operator $\overline{\partial}\partial_h$ admits interior elliptic estimates that do not depend on $d$, which we'll use in the proof of Proposition \ref{dels} below. 
\begin{lem}\label{lem: schauder estimate}
    For all $1<p<\infty$ there exists $C=C(n,r,p,A)$ such that for all sections $u$ of $\textrm{End}(E)$,
    $$|u|_{h,W^{2,p}(D(r))}\leq C(|\overline{\partial}\partial_h u|_{h,L^p(D(r))}+| u|_{h,L^p(D(1))}).$$
\end{lem}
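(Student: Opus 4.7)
The plan is to reduce the estimate to classical interior Calder\'on--Zygmund theory for the Euclidean Laplacian on matrix-valued functions by working in a holomorphic frame in which every geometric quantity is bounded uniformly in $d$. The decisive input is the uniform curvature bound $|F(h)|_h \leq C(n,r,A)$ of Corollary \ref{localbound}, which is itself a consequence of Theorem B. Combined with Proposition \ref{prop:boundinframes}, it produces a holomorphic radial frame on $D(r_1)$, $r_1 = (1+r)/2$, in which the matrices representing $h$, $h^{-1}$ and the Chern connection form $\Omega = H^{-1}\partial H$ are all bounded in terms of $n,r,A$. In such a frame the $h$-norms and the Euclidean norms on $\mathrm{End}(E)$-valued forms are uniformly equivalent, so it suffices to prove the Euclidean analogue of the stated estimate.

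In this frame, writing the endomorphism $u$ as a matrix $U$, one has $\partial_h u \leftrightarrow \partial U + [\Omega,U]$, and a direct computation yields
\[
\overline{\partial}\partial_h u \;\longleftrightarrow\; \bigl(\partial_{\overline z}\partial_z U + [F_{\overline z z}, U] + [\Omega_z, \partial_{\overline z} U]\bigr)\,d\overline z\wedge dz,
\]
so that the principal part of $\overline{\partial}\partial_h$ is the entrywise Euclidean Laplacian, and the lower order coefficients $F_{\overline z z}$ and $\Omega_z$ are bounded uniformly in $d$. Applying the standard interior Calder\'on--Zygmund estimate for $\partial_{\overline z}\partial_z$ on a nested sequence of subdisks $D(r) \subset D(r') \subset \cdots \subset D(r_1)$, and using a cutoff/absorption argument (together with the interpolation $|\partial_{\overline z} U|_{L^p} \leq \epsilon |U|_{W^{2,p}} + C_\epsilon |U|_{L^p}$) to absorb the first order term $[\Omega_z, \partial_{\overline z} U]$, produces a bound of the form $|U|_{W^{2,p}(D(r'))} \leq C\bigl(|\overline{\partial}\partial_h u|_{L^p(D(r_1))} + |U|_{L^p(D(r_1))}\bigr)$ with $C$ depending only on $n,r,p,A$.

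To promote this to the claimed $W^{2,p}$ bound on $\partial_h u = \partial U + [\Omega, U]$, one bootstraps. The identity $\overline{\partial}\Omega = F$ together with Calder\'on--Zygmund for $\overline{\partial}$ gives $\Omega \in W^{1,p}(D(r_1))$ uniformly in $d$; combined with the $W^{2,p}$ bound on $U$ from the previous step and Sobolev multiplication in complex dimension one, this controls $[\Omega, U]$, and then a further application of the same elliptic estimate on $D(r)\subset D(r')$ upgrades the regularity of $\partial_h u$ to the asserted $W^{2,p}$. The main obstacle is uniformity of the constants in $d$: both the harmonic metric $h$ and the Chern connection form depend on $d$ a priori, and only the nontrivial curvature bound coming from Theorem B makes a $d$-independent frame—and hence a $d$-independent constant in the final estimate—available.
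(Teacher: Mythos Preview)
Your core argument is exactly the paper's: pass to a holomorphic radial frame using the uniform curvature bound of Corollary \ref{localbound} together with Proposition \ref{prop:boundinframes}, write $\overline{\partial}\partial_h u$ in that frame as $\partial_{\overline z}\partial_z U$ plus lower-order terms with coefficients $H^{-1}\partial_z H$ and $F(h)$ that are bounded independently of $d$, and then invoke the standard interior $L^p$ elliptic estimate. The paper simply cites \cite[Theorem 9.11]{GT} and \cite[Theorem 10.33]{Ni} for this last step rather than spelling out the interpolation/absorption of the first-order term as you do.

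Where you diverge is the final paragraph. The paper's proof, and its only application of the lemma (in Proposition \ref{dels}), actually establish and use $|u|_{h,W^{2,p}(D(r))}$ on the left-hand side, not $|\partial_h u|_{h,W^{2,p}(D(r))}$; the $\partial_h$ in the displayed inequality appears to be a slip. Your bootstrap to reach the literally stated bound does not go through: ``a further application of the same elliptic estimate'' to $\partial_h u$ would require $L^p$ control on $\overline{\partial}\partial_h(\partial_h u)$, which is not available. From $U\in W^{2,p}$, $\Omega\in C^0\cap W^{1,p}$ and $\partial_h u=\partial U+[\Omega,U]$ one obtains $\partial_h u\in W^{1,p}$ on a subdisk, but pushing to $W^{2,p}$ would need a derivative of $F(h)$, which only becomes available after the later bootstrap of Proposition \ref{holocalbounds}. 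This gap is harmless for the paper's purposes, since only the $|u|_{W^{2,p}}$ estimate is ever used.
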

\begin{proof}
The quantities of the estimate above do not depend on the frame, so we are welcome to work in a holomorphic radial frame. Let $H$ be the matrix of $h$ in this frame, so that $H^{-1}\partial_z H dz$ is the connection form.
From the formula $\partial_h u = \partial_z u dz + [H^{-1}\partial_z H dz,u],$
\begin{equation}\label{eqn: laplacian for h}
    \overline{\partial}\partial_h u = (\partial_{\overline{z}}\partial_z u + [H^{-1}\partial_z H ,\partial_{\overline{z}}u]+[\partial_{\overline{z}}(H^{-1}\partial_z H ),u])d\overline{z}\wedge dz.
\end{equation}
Note that $F(h) = \partial_{\overline{z}}(H^{-1}\partial_z H)d\overline{z}\wedge dz,$ and hence by Corollary \ref{localbound} and Proposition \ref{prop:boundinframes}, the coefficients of $\overline{\partial}\partial_h$ are uniformly bounded, depending on $n,r,$ and $A$, but not on $d$.
 Thus, in the Euclidean norm we have the interior elliptic estimate
 $$|u|_{W^{2,p}(D(r))}\leq C(n,r,p,A)(|\overline{\partial}\partial_h u|_{L^p(D(1))}+| u|_{L^p(D(1))})$$ (see \cite[Theorem 9.11]{GT} for the estimate in the case of functions and \cite[Theorem 10.33]{Ni} for treating sections of bundles). Since $H$ and $H^{-1}$ are uniformly bounded, we obtain a comparable estimate when we change Euclidean norms to $h$-norms.
\end{proof}

\subsection{Almost parallelity}
The main step toward Theorem C is the proposition below.
\begin{prop}\label{dels}
    For all $\alpha\in [0,1),$ there exists $C=C(n,r,\alpha,A)$, $c=c(n,r,\alpha,A)>0$ such that for every $i,$
         $$|\partial_h \pi_i|_{h,C^{0,\alpha}(D(r))}\leq Ce^{-cd}.$$
\end{prop}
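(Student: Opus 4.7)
The plan is to exploit the fact that each $\pi_i$ is a holomorphic section of $\mathrm{End}(E)$. Indeed, the critical set is empty for $\phi \in S_n(d,A)$, so both the generalized eigenspace $E_i$ and its complement $\oplus_{j \neq i} E_j$ are holomorphic subbundles of $E$, making $\pi_i$ a holomorphic projection. From this and the curvature identity $\overline{\partial}\partial_h s + \partial_h \overline{\partial} s = [F(h),s]$ for an endomorphism section $s$, I get
\[
\overline{\partial}\partial_h \pi_i = [F(h),\pi_i].
\]
The first major step is to show that this quantity has $h$-norm $\le Ce^{-cd}$ on a slightly smaller subdisk.

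To accomplish this, I would substitute Hitchin's equation $F(h)=-[\phi,\phi^{*_h}]$ and expand $\phi=\phi_s+\phi_n$, writing $[F(h),\pi_i]$ as a sum of four double commutators $[[\phi_\alpha,\phi_\beta^{*_h}],\pi_i]$ with $\alpha,\beta\in\{s,n\}$. Three of these involve one of $[\phi_s,\phi_s^{*_h}]$, $[\phi_s,\phi_n^{*_h}]$, or $[\phi_n,\phi_s^{*_h}]$, all controlled by $Ce^{-cd}$ via Proposition~\ref{asymdecoupling}. For the remaining term $[[\phi_n,\phi_n^{*_h}],\pi_i]$, I would apply $[\phi_n,\pi_i]=0$ (Proposition~\ref{note}) and the Jacobi identity to rewrite it as $[\phi_n,[\phi_n^{*_h},\pi_i]]$. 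Taking adjoints of $[\phi_n,\pi_i]=0$ gives $[\phi_n^{*_h},\pi_i^{*_h}]=0$, so $[\phi_n^{*_h},\pi_i] = [\phi_n^{*_h},\pi_i-\pi_i^{*_h}]$. By Mochizuki's almost orthogonality theorem (Theorem~\ref{GRSdecoupling}) together with Lemma~\ref{lem:almostorthogonal1}, $|\pi_i - \pi_i^{*_h}|_h \le Ce^{-cd}$, while Theorem~B bounds $|\phi_n|_h$ independently of $d$. Combining, $|[F(h),\pi_i]|_h \le Ce^{-cd}$.

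To convert this smallness of $\overline{\partial}\partial_h\pi_i$ into smallness of $\partial_h\pi_i$, I cannot directly apply the Schauder estimate (Lemma~\ref{lem: schauder estimate}) to $\pi_i$, since $|\pi_i|_{L^p}$ is not small. The fix is to apply it instead to $\chi_i:=\pi_i-\pi_i^{*_h}$. Two observations make this work. First, almost orthogonality gives $|\chi_i|_h \le Ce^{-cd}$. Second, because $\overline{\partial}\pi_i=0$ forces $\partial_h(\pi_i^{*_h}) = (\overline{\partial}\pi_i)^{*_h}=0$, we have $\partial_h\chi_i = \partial_h\pi_i$ and $\overline{\partial}\partial_h\chi_i = [F(h),\pi_i]$. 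Applying Lemma~\ref{lem: schauder estimate} to $\chi_i$ (after passing to slightly larger subdisks to have the input bounds in hand) then yields
\[
|\partial_h \pi_i|_{h,W^{2,p}(D(r))} = |\partial_h\chi_i|_{h,W^{2,p}(D(r))} \le Ce^{-cd}.
\]
The conclusion follows by Sobolev embedding $W^{2,p}\hookrightarrow C^{0,\alpha}$, valid on a planar domain for $p>2/(1-\alpha)$.

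The hard part is the second step, establishing $|[F(h),\pi_i]|_h \le Ce^{-cd}$. The two key moves are (i) the Jacobi identity combined with $[\phi_n,\pi_i]=0$ to reduce the $[[\phi_n,\phi_n^{*_h}],\pi_i]$ term to a single inner commutator, and (ii) using $[\phi_n^{*_h},\pi_i^{*_h}]=0$ to replace $\pi_i$ by the exponentially small $\pi_i-\pi_i^{*_h}$. This is exactly the step that requires Theorem~B (a uniform-in-$d$ bound on $|\phi_n|_h$); the rest is bookkeeping combining earlier commutator estimates with elliptic regularity.
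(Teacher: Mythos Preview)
Your proof is correct and follows the same overall architecture as the paper: show $|\overline{\partial}\partial_h\pi_i|_h\le Ce^{-cd}$, then apply the elliptic estimate of Lemma~\ref{lem: schauder estimate} to $\chi_i=\pi_i-\pi_i^{*_h}$ (using $\partial_h\pi_i^{*_h}=0$), and finish by Sobolev embedding. The second half of your argument is identical to the paper's.

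The one noteworthy difference is in the first step. You split $\phi=\phi_s+\phi_n$, invoke Proposition~\ref{asymdecoupling} for three of the four commutator terms, and for the remaining $[[\phi_n,\phi_n^{*_h}],\pi_i]$ term use the Jacobi identity together with Theorem~B to bound $|\phi_n|_h$. The paper instead works with the full $\phi$: since $[\pi_i,\phi]=0$, Jacobi gives $\overline{\partial}\partial_h\pi_i=[\phi,[\pi_i,\phi^{*_h}]]$, and then $[\pi_i,\phi^{*_h}]=[\pi_i-\pi_i^{*_h},\phi^{*_h}]$ is bounded using only the linear-in-$d$ bound $|\phi|_h\le Cd$ from Proposition~\ref{firstbound} (the $d^2$ factor is absorbed into the exponential). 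So the paper's route to $|\overline{\partial}\partial_h\pi_i|_h\le Ce^{-cd}$ is slightly more economical, avoiding both the four-term decomposition and the direct appeal to Theorem~B; your route is equally valid but leans on the stronger input.
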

When $\phi$ is generically regular semisimple, the result above is Proposition 2.10 in \cite{Mo}. In the general case, we have to bound some extra terms.
\begin{proof}[Proof of Proposition \ref{dels}]
First observe that for any holomorphic section $s$ of $\textrm{End}(E)$, $$\overline{\partial}\partial_h s =[s,[\phi,\phi^{*_h}]].$$ Indeed, $$\overline{\partial}\partial_h s=\overline{\partial}\partial_hs - \partial_h\overline{\partial}s=[F(h),s]= [s,[\phi,\phi^{*_h}]],$$ where in the last equality we used the Hitchin equation.

    Now, for notational convenience, set $s=\pi_i$. We first show that there exists $C=C(n,r_1,A)$ and $c=c(n,r_1,A)$ such that on $D(r),$ 
    \begin{equation}\label{eq: deldelbars}
        |\overline{\partial}\partial_h s|_h\leq Ce^{-cd}.
    \end{equation}
 Since $[s,\phi]=0,$ the Jacobi identity yields that $$\overline{\partial}\partial_h s= [s,[\phi,\phi^{*_h}]]=[\phi,[s,\phi^{*_h}]].$$ Taking norms and applying Proposition \ref{firstbound}, we have $$|\overline{\partial}\partial_h s|_{h}\leq \sqrt{2}|\phi|_{h}|[s,\phi^{*_h}]|_{h}\leq Cd|[s,\phi^{*_h}]|_h.$$
     Note that $s=s^\dagger$ and $$|[s^\dagger,\phi^{*_h}]|_h= |[s^\dagger-s^{*_h},\phi^{*_h}]|_h\leq 2|\phi|_h|s^\dagger-s^{*_h}|_h.$$ By Proposition \ref{firstbound} and Theorem \ref{GRSdecoupling} combined with Lemma \ref{lem: star and dagger}, we deduce $$|[s,\phi^{*_h}]|_h=|[s^\dagger,\phi^{*_h}]|_h\leq AdCe^{-cd}|s|_h.$$ 
From part (1) of Lemma \ref{lem:almostorthogonal1}, $|s|_h$ is uniformly bounded, and the claim follows after enlarging the constants $C$ and $c$ in order to absorb $Ad$.

To deduce the proposition, by holomorphicity of $s$, $\partial_h s^{*_h}=0$, and hence (\ref{eq: deldelbars}) yields $$|\overline{\partial}\partial_h (s-s^{*_h})|\leq Ce^{-cd}.$$ By Lemma \ref{lem: schauder estimate}, and because $L^p$ norms on a disk are controlled by $L^\infty$ norms, for all $1<p<\infty$ we have $C=C(n,r,p,A)>0$ such that $$|s-s^{*_h}|_{h,W^{2,p}(D(r))}\leq C(|\overline{\partial}\partial_h (s-s^{*_h})|_{h}+|s-s^{*_h}|_{h}),$$ where the $C^0$-norms above are taken over $D(r_1).$ Hence, by Theorem \ref{GRSdecoupling} again and the estimate above, $$|s-s^{*_h}|_{h,W^{2,p}(D(r))}\leq Ce^{-cd}.$$ Via Morrey's theorem, for all $\alpha\in (0,1)$ we can find $C=C(n,r,\alpha,A)>0$ such that $$|s-s^{*_h}|_{h,C^{1,\alpha}(D(r))}\leq C|s-s^{*_h}|_{h,W^{2,p}(D(r))}\leq Ce^{-cd},$$ from which we obtain $$|\partial_h s|_{h,C^{0,\alpha}(D(r))}=|\partial_h (s-s^{*_h})|_{h,C^{0,\alpha}(D(r))}\leq Ce^{-cd},$$ as desired. 
    \end{proof}

We recall that we say $E_i$ is said to be $\epsilon$-almost parallel if $|B_i|_h \leq \epsilon$, where $B_i$ is the second fundamental form of $E_i$ in $E$, which is a one-form valued in the bundle $\mathrm{Hom}(E_i, E_i^\perp)$.

\begin{thm}[Theorem C]
    Let $(E,\overline{\partial}_E,\phi,h)$ be a harmonic bundle on $D(1)$ with $E=D(1)\times \C^n$ and $\phi\in S_n(d,A).$
    On any subdisk $D(r)\subset D(1)$, there exist constants $C=C(n,r ,A), c=c(n,r,A)>0$ such that each sub-bundle $E_i$ is $Ce^{-cd}$-almost parallel for $h$.
\end{thm}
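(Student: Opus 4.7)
My plan is to reduce Theorem C almost directly to Proposition \ref{dels}, by deriving a clean algebraic formula for the second fundamental form $B_i$ in terms of the holomorphic projection $\pi_i$. The key observation, made possible by the hypothesis $\phi \in S_n(d,A)$ which forces an empty critical set on $D(1)$, is that each generalized eigenspace $E_i \subset E$ is a genuine holomorphic sub-bundle.

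The first step is to rewrite $B_i$ using this holomorphicity. For a section $s$ of $E_i$, since $\overline{\partial}$ preserves the holomorphic sub-bundle $E_i$, $\overline{\partial}s$ still takes values in $E_i$, so $(1-\pi_i')\overline{\partial}s = 0$ and hence $B_i(s) = (1-\pi_i')\partial_h s$. Writing $s = \pi_i s$ and applying the Leibniz rule for $\partial_h$ gives
\[
(1-\pi_i')\partial_h s = (1-\pi_i')(\partial_h \pi_i)(s) + (1-\pi_i')\pi_i(\partial_h s).
\]
The second term vanishes by the algebraic identity $\pi_i'\pi_i = \pi_i$, which holds because $\pi_i$ has image $E_i$ and $\pi_i'$ fixes $E_i$ pointwise. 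This leaves the formula
\[
B_i(s) = (1-\pi_i')(\partial_h \pi_i)(s)
\]
for $s$ a section of $E_i$.

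From this point the desired bound is immediate. Proposition \ref{dels} supplies $|\partial_h \pi_i|_h \leq Ce^{-cd}$, while $|1-\pi_i'|_h$ is bounded by a dimensional constant since it is an orthogonal projection, and $|\pi_i|_h$ is bounded using Theorem \ref{GRSdecoupling} and Lemmas \ref{lem:almostorthogonal1}, \ref{lem: almost orthogonal}. Standard inequalities of the form $|AB|_h \leq |A|_h |B|_h$ then yield $|B_i|_h \leq C|\partial_h \pi_i|_h \leq Ce^{-cd}$.

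The real analytic lifting is entirely contained in Proposition \ref{dels}; the only additional content needed for Theorem C is the algebraic identity above. Because the identity applies to eigenspaces of arbitrary rank, this approach bypasses the regular semisimple hypothesis and the matrix-in-a-holomorphic-frame computation used in Mochizuki's rank-one argument \cite[Proposition 2.11]{Mo}. The only mild subtlety is verifying the compatibility $\pi_i'\pi_i = \pi_i$ and the holomorphicity statement $(1-\pi_i')\overline{\partial}s = 0$, but both follow transparently from the definitions once one recognizes $E_i$ as a holomorphic sub-bundle.
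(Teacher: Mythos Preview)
Your proof is correct and follows essentially the same route as the paper: both arguments reduce Theorem C to Proposition \ref{dels} by showing that for $s$ a section of $E_i$ one has $B_i(s) = (1-\pi_i')(\partial_h \pi_i)(s)$, hence $|B_i|_h \leq |\partial_h \pi_i|_h$. The paper phrases this as ``$(\nabla_h \pi_i)s_i$ and $(\nabla_h \pi_i')s_i$ are congruent modulo $E_i$,'' while you obtain it via the Leibniz rule together with $\pi_i'\pi_i = \pi_i$; these are the same computation, and your appeal to a bound on $|\pi_i|_h$ is actually superfluous since it does not enter the final inequality.
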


\begin{proof}
As in the proof of Proposition 2.11 in \cite{Mo}, this is an algebraic consequence of the $C^1$ bound in Proposition \ref{dels}. Since $\overline{\partial} \pi_i = 0$, Proposition \ref{dels} shows that $|\nabla_h \pi_i| \leq C e^{-cd}$. Now let $\pi_i'$ be the $h$-orthogonal projection to $E_i$. With respect to the splitting $E = E_i \oplus E_i^\perp$ we have
\[
\nabla_h \pi'_i = \begin{pmatrix}
    0 & B_i \\
    B_i^{*_h} & 0 \\
\end{pmatrix},
\]
where $B_i$ is the second fundamental form of $E_i$. On the other hand, if $s_i \in E_i$, then $(\nabla_h \pi_i)s_i$ and $(\nabla_h \pi_i')s_i$ are congruent modulo $E_i$, so the upper right part of $\nabla_h \pi_i$ is also equal to $B_i$. Therefore 
\[
\frac{1}{2}|\nabla_h \pi'_i|^2_h =  |B_i|^2_h \leq  |\nabla_h \pi_i|^2_h \leq Ce^{-cd}.
\]
This shows that $E_i$ is almost parallel.
\end{proof}

From the Gauss equation $F_i = F + B_i^* \wedge B_i$, it follows: 

\begin{cor} \label{cor: curvature of sub-bundles} In the setting of Theorem C, the difference between the curvature $F_i$ of the sub-bundle $E_i$ and the restriction to $E_i$ of the curvature $F$ of $E$ is bounded by $C^2e^{-2cd}.$ 
\end{cor}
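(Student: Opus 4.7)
The plan is to argue that this corollary is essentially an immediate consequence of the second-fundamental-form bound extracted in the proof of Theorem C, combined with the Gauss--Codazzi equation already quoted in the preceding sentence. The main work has already been done; what remains is to unwind the Gauss equation on the right-hand side and apply the triangle inequality.

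First I would recall the Gauss--Codazzi relation for a holomorphic sub-bundle with its induced metric Chern connection: the curvature $F_i$ of $E_i$ and the $E_i$-component of the ambient curvature $F$ differ by a term of the form $-B_i^{*_h}\wedge B_i$, where $B_i \in \Omega^{1,0}(\mathrm{Hom}(E_i,E_i^\perp))$ is the second fundamental form appearing in the proof of Theorem C. Concretely, applying $\pi_i'$ to both sides of the identity $\nabla_h = \nabla^{E_i} \oplus \nabla^{E_i^\perp} + B_i + B_i^{*_h}$ and computing $(\nabla_h)^2$ block-diagonally yields $F_i = F|_{E_i} - B_i^{*_h}\wedge B_i$, which is precisely the ``$F_i = F + B_i^*\wedge B_i$'' displayed in the excerpt (up to sign conventions for $B_i^{*_h}\wedge B_i$).

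Next I would take pointwise $h$-norms. Since $B_i^{*_h}\wedge B_i$ is a $(1,1)$-form valued in $\mathrm{End}(E_i)$ built bilinearly from $B_i$, there is a dimensional constant $c_n$ with
\[
\bigl|F_i - F|_{E_i}\bigr|_h \;=\; \bigl|B_i^{*_h}\wedge B_i\bigr|_h \;\leq\; c_n\,|B_i|_h^2.
\]
By the remark just before the statement (extracted from the proof of Theorem C), $|B_i|_h \leq \tfrac{1}{\sqrt{2}}|\nabla_h \pi_i'|_h \leq Ce^{-cd}$, with $C=C(n,r,A)$, $c=c(n,r,A)$ as in Theorem C. Squaring and absorbing $c_n$ and the factor $\tfrac{1}{2}$ into the constants (using our convention from the introduction that constants may be redefined), we obtain $|F_i - F|_{E_i}|_h \leq C^2 e^{-2cd}$ on $D(r)$.

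There is no real obstacle here: the only minor point worth checking is that the Gauss equation is being applied with the correct convention for $B_i$ and $B_i^{*_h}$, so that the block off-diagonal terms of $\nabla_h\pi_i'$ identified in the proof of Theorem C really are the ones entering the Gauss formula; this is straightforward from the decomposition $E = E_i \oplus E_i^\perp$ used there. Everything else is a one-line norm estimate.
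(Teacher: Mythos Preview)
Your proposal is correct and matches the paper's approach exactly: the paper simply invokes the Gauss equation $F_i = F + B_i^* \wedge B_i$ together with the bound $|B_i|_h \leq Ce^{-cd}$ extracted from the proof of Theorem C, which is precisely what you do. Your write-up is just a more detailed unpacking of the one-line justification given there.
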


\subsection{Asymptotic expansion of the flat connection}
Here we prove Corollary C. Let's recall the setup from the introduction. We are given the flat connection $D_h=\nabla_h+\phi+\phi^{*_h}$, and we would like to find an approximation for when $d$ is large.

Relative to our generalized eigenspace decomposition $E=\oplus_{i=1}^m E_i$, let $\phi_i$ be the eigen-$1$-forms of $\phi_i,$ i.e., $\phi_s=\sum_{i=1}^m \phi_i\pi_i.$ As well, let $\phi_n$ be the nilpotent part. Set $h_i$ to be the restriction of $h$ to $E_i$ and $h^{\oplus}=\oplus_{i=1}^m h_i.$ We denote the Chern connection of $h^{\oplus}$ by $\nabla_h^{\oplus},$ which splits as $$\nabla_h^{\oplus}= \sum_{i=1}^m \partial_i + \overline{\partial}_i,$$ where $\overline{\partial}_i=\overline{\partial}_E|_{E_i}$  and $\partial_i$ is the $(1,0)$ component of the Chern connection for $h_i.$ The statement of Corollary C is that on $D(r),$
\begin{equation}\label{asymptotic2}
    D_h = \nabla_{h}^{\oplus}+\sum_{i=1}^m(\phi_i+\overline{\phi}_i)\pi_i+ \phi_n +\phi_n^{*_{h^{\oplus}}}+a,
\end{equation}
where $a$ is an $\textrm{End}(E)$-valued $1$-form such that $|a|_{h}\leq Ce^{-cd},$ for constants
$C=C(n,r,A), c=c(n,r,A)>0$.

As in section \ref{ThmAsection}, let $\pi_i'$ be the 
the $h$-orthogonal projection onto $E_i$. By uniqueness of the Chern connection, for any section $s$ of $\textrm{End}(E_i)$, $\overline{\partial}_i s=\overline{\partial}_E s$ and $\partial_i s=\pi_{i}'\circ \partial_h s.$ 
    \begin{proof}[Proof of Corollary C]
    Decomposing $D_h$ as $$D_h = \nabla_h + \phi_s+\phi_s^{*_h}+\phi_n+\phi_n^{*_h},$$ we see that proving (\ref{asymptotic2}) amounts to showing that on $D(r),$ $$ |\phi_s+\phi_s^{*_h}-\sum_{i=1}^m (\phi_i\pi_i+\overline{\phi}_i\pi_i) |_{h}, |\phi_n^{*_h}-\phi_n^{*_{h^{\oplus}}}|_h, \textrm{ and } |\partial_h - \sum_{i=1}^m \partial_i|_h$$ are all bounded above by $Ce^{-cd}.$ For the first item, note that $\phi_s^\dagger=\Big (\sum_{i=1}^m \phi_i\pi_i\Big )^\dagger= \sum_{i=1}^m \overline{\phi_i}\pi_i,$ so by Lemma \ref{lem: star and dagger} and Theorem \ref{GRSdecoupling}, on $D(r)$ and for constants $C,c$,
$$\Big |\phi_s+\phi_s^{*_h}-\sum_{i=1}^m (\phi_i\pi_i+\overline{\phi}_i\pi_i)\Big |_{h}\leq Ce^{-cd}.$$ 

For the $\phi_n$ term, recall that if $H$ is the matrix-valued function determined by $h,$ then $\phi_n^{*_h}=H^{-1}\overline{\phi}_n^\vee H, $ where the $\vee$ is the transpose operator, and likewise for $h^{\oplus}$. Since the splitting $E=\oplus_{i=1}^m E_i$ is $h^\oplus$-orthogonal, using this expression, the inequality $|\phi_n^{*_h}-\phi_n^{*_{h^{\oplus}}}|_h\leq Ce^{-cd}$ follows from Theorem \ref{GRSdecoupling} and Theorem B.

Proving $|\partial_h - \sum_{i=1}^m \partial_i|_h\leq Ce^{-cd}$ amounts to finding the right estimate on the action of $\partial_h-\partial_i$ on sections of $E_i$.
For any section $s$ of $E_i$, by the Leibniz rule, $$(\partial_h-\partial_i) s  = \pi_i'\circ \pi_i \circ \partial_h s - \pi_i'\partial_h(\pi_i s)=\pi_i'\circ \partial_h\pi_i(s).$$ Therefore, by Proposition \ref{dels}, $$|\partial_h s - \partial_i s|_{h}= |\pi_i'\circ \partial_h\pi_i(s)|_{h}\leq |\partial_h\pi_i(s)|_{h}\leq Ce^{-cd}|s|_h.$$ The result follows. 
    \end{proof}

\subsection{Higher order estimates}
We prove higher order estimates that will come into play in the proof of Theorem A.  The procedure is a bootstrap and we follow \cite[section 2.25]{Mo}, although the results are naturally weaker than that of \cite{Mo}.

We use the proposition below to choose a good local frame for $E$.  
\begin{prop}\label{prop:E_iframe}
    On any subdisk $D(r)\subset D(1)$, there exists $C=C(n,r, A)>0$ such that, for each $i,$ we can find a holomorphic frame for $E_i$ in $D(r)$ in which the norms of the connection forms, the matrix of $h|_{E_i},$ and the inverse matrix are all bounded above by $C$.
\end{prop}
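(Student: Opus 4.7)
The plan is to apply Proposition \ref{prop:boundinframes} to each Hermitian holomorphic vector bundle $(E_i, h|_{E_i})$, viewed over a suitable intermediate disk. What needs verification is a uniform (in $d$) bound on the curvature of the Chern connection of $h|_{E_i}$, after which the frame is produced by the radial construction described earlier.

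First I would fix an intermediate radius $r < r_2 < 1$ and observe that, since $\phi \in S_n(d,A)$ has empty critical set by definition, the generalized eigenspace decomposition $E = \oplus_{i=1}^m E_i$ is a decomposition into holomorphic sub-bundles over all of $D(1)$. In particular $(E_i, h|_{E_i})$ is a genuine Hermitian holomorphic vector bundle on $D(1)$ with its own Chern connection and curvature $F_i$.

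Next I would assemble the curvature bound. By Corollary \ref{localbound}, $|F(h)|_h \leq C(n, r_2, A)$ on $D(r_2)$, uniformly in $d$. By Corollary \ref{cor: curvature of sub-bundles}, the difference between $F_i$ and the restriction of $F(h)$ to $E_i$ is bounded by $C^2 e^{-2cd}$, which is certainly uniformly bounded in $d$. Combining these gives a uniform bound $|F_i|_{h|_{E_i}} \leq C'(n, r_2, A)$ on $D(r_2)$, independent of $d$.

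Finally, I would apply Proposition \ref{prop:boundinframes} to $(E_i, h|_{E_i})$ on $D(r_2)$: after the harmless rescaling $D(r_2) \to D(1)$, the uniform curvature bound produces a holomorphic frame on $D(r)$ (absorbing the ratio $r/r_2$ into the constant) in which the connection matrix, the Gram matrix of $h|_{E_i}$, and its inverse are all bounded by a constant depending only on $n$, $r$, and $A$. Since the number of sub-bundles $m \leq n$, the final constant can be taken uniform in $i$. I do not expect any substantive obstacle here: the proposition is essentially a direct corollary of Proposition \ref{prop:boundinframes}, Corollary \ref{localbound}, and Corollary \ref{cor: curvature of sub-bundles}, with the only mild care being the use of a nested subdisk so that the curvature estimates and the radial frame construction occur on the correct domains.
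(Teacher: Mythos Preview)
Your proposal is correct and follows essentially the same approach as the paper: bound the curvature of $h|_{E_i}$ by combining Corollary \ref{localbound} with Corollary \ref{cor: curvature of sub-bundles}, then invoke Proposition \ref{prop:boundinframes}. The paper's proof is in fact terser and omits the intermediate-radius bookkeeping you spell out, but the logical content is identical.
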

\begin{proof}
   By Corollary \ref{localbound}, $F(h)$ is uniformly bounded depending only on $n,r,$ and $A$. By Corollary \ref{cor: curvature of sub-bundles}, the same bound holds for $F(h|_{E_i}),$ with a slight change in the constant. We then apply Proposition \ref{prop:boundinframes}.
\end{proof}
Putting frames from Proposition \ref{prop:E_iframe} together, we obtain a frame for all of $E$, which we'll call an adapted holomorphic radial frame. Let $H$ and $\phi=\Phi dz$ be the matrix representations of $h$ and $\phi$ respectively in this frame. By design, $\Phi = \sum_{i=1}^m \Phi_i,$ where $\Phi_i$ is an endomorphism of $E_i|_{D(r_1)}.$ We write matrix entries of $H$ as $H_{ij}^{\alpha\beta}=h(u_i^\alpha,u_j^\beta).$ 
\begin{lem}\label{localhobounds}
There exists $C=C(n,r,A), c=c(n,r,A)>0$ such that in $D(r)$, for $i\neq j$, 
 $|\partial_z H_{ij}^{\alpha\beta}|,|\partial_{\overline{z}}H_{ij}^{\alpha\beta}|, |\partial_{\overline{z}}\partial_z H_{ij}^{\alpha\beta}|\leq Ce^{-cd},$ and for $i=j$ these quantities are all bounded above by $C$.
\end{lem}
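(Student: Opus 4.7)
The plan is to exploit the structure of the adapted holomorphic radial frame, in which $\pi_i$ appears as a constant block-projection matrix, $\phi$ (which commutes with each $\pi_i$) is block-diagonal, and $H$ is a small perturbation of a block-diagonal matrix with diagonal blocks bounded uniformly by Proposition \ref{prop:E_iframe}. In particular, since the off-diagonal blocks of $H$ will turn out to be $O(e^{-cd})$, the $h$-norm and the Euclidean frame norm are comparable up to universal constants and we can switch freely between them.

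For the diagonal blocks, Proposition \ref{prop:E_iframe} gives immediate uniform bounds on $H_{ii}$, $H_{ii}^{-1}$, and the one-form $H_{ii}^{-1}\partial H_{ii}$, so $|\partial_z H_{ii}^{\alpha\beta}|$ is bounded by $C$, and by Hermitian symmetry so is $|\partial_{\overline{z}} H_{ii}^{\alpha\beta}|$. For the mixed second derivative, the uniform curvature bound $|F(h)|_h\le C$ of Corollary \ref{localbound} together with the frame expression $F(h) = \partial_{\overline{z}}(H^{-1}\partial_z H)\, d\overline{z}\wedge dz$ (up to sign) allows one to algebraically solve for $\partial_{\overline{z}}\partial_z H_{ii}^{\alpha\beta}$ using the bounds on $H$, $H^{-1}$, and the first derivatives, yielding the $O(1)$ bound.

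For $i\neq j$, the $C^0$ bound $|H_{ij}^{\alpha\beta}|\leq Ce^{-cd}$ follows from Theorem \ref{GRSdecoupling} via Lemma \ref{lem: almost orthogonal}, applied to the frame vectors whose $h$-norms are uniformly bounded by the previous step. For the first derivative, the idea is to invoke Proposition \ref{dels}: since $\pi_i$ is constant in the adapted frame, $\partial_h \pi_i = [\Omega, \pi_i]\, dz$ where $\Omega = H^{-1}\partial_z H$, and the commutator isolates precisely the off-block-diagonal entries in the $i$-th row and column of $\Omega$. Hence every off-block-diagonal entry of $\Omega$ is $Ce^{-cd}$. Combining this with the already-obtained block-size bounds on $H$ via the identity $\partial_z H = H\cdot \Omega$ and separating cases by index yields $|\partial_z H_{ij}^{\alpha\beta}|\leq Ce^{-cd}$; Hermitian symmetry then gives the $\partial_{\overline{z}}$ version.

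The main obstacle is the off-diagonal second-derivative estimate, which turns on the fact that the off-diagonal blocks of $[\phi, \phi^{*_h}]$ are $O(e^{-cd})$ in spite of $|\phi|_h$ growing linearly in $d$. Here one uses Corollary C, which implies $\phi^{*_h} = \sum_i \overline{\phi}_i\pi_i + \phi_n^{*_{h^\oplus}} + O(e^{-cd})$; both explicit terms are block-diagonal in the adapted frame, so the off-diagonal part of $\phi^{*_h}$ itself is $O(e^{-cd})$. Since $\phi$ is block-diagonal, commuting it with this off-diagonal remainder and absorbing the linear-in-$d$ bound on $|\phi|_h$ from Proposition \ref{firstbound} into the exponential gives that the off-diagonal blocks of $[\phi, \phi^{*_h}]$ are $O(e^{-cd})$. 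By Hitchin's equation the same bound holds for the off-diagonal blocks of the curvature matrix in the frame, and solving $F(h) = \partial_{\overline{z}}(H^{-1}\partial_z H)\, d\overline{z}\wedge dz$ algebraically for $\partial_{\overline{z}}\partial_z H_{ij}^{\alpha\beta}$ in terms of the curvature and first-order quantities whose off-diagonal pieces are already controlled completes the proof.
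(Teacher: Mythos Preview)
Your proposal is correct and takes essentially the same approach as the paper: Proposition \ref{prop:E_iframe} and almost orthogonality for the block structure of $H$, Proposition \ref{dels} for the off-diagonal first derivatives, and block-diagonality of the leading part of $[\phi,\phi^{*_h}]$ together with Hitchin's equation for the off-diagonal second derivative. The paper's presentation differs only cosmetically, computing $\partial_z h(u_i,u_j)$ intrinsically via $h((\partial_h\pi_i)u_i,u_j)+h(\pi_i\partial_h u_i,u_j)$ rather than through the connection matrix identity $\partial_h\pi_i=[\Omega,\pi_i]\,dz$, and invoking Proposition \ref{asymdecoupling} to reduce the curvature to $[\phi_n,\phi_n^{*_h}]$ rather than treating $[\phi,\phi^{*_h}]$ directly via Corollary C.
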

\begin{proof}
Since we have uniform bounds for the connection form $H^{-1}\partial H$, the curvature $\overline{\partial}(H^{-1}\partial H)$, and $H$ itself, and $H$ is real, all entries of $\partial_z H$, $\partial_{\overline{z}}H$, and $\partial_{\overline{z}}\partial_z H$ are uniformly bounded.

We now assume $i\neq j$. Since $H$ is real, the norms of $\partial_z H$ and $\partial_{\overline{z}} H$ agree, so we only need to find a bound on $\partial_z H$. Let $u_i$ be any element of the holomorphic radial frame for $E_i$, and $u_j$ in the frame for $E_j$. Our assumption on the frame implies that $u_i$ and $\partial_h u_i$ are uniformly bounded in $h$-norm, and likewise for $u_j$.  
By holomorphicity, $$\partial_z h(u_i,u_j)=h((\partial_h \pi_i)u_i,u_j) + h(\pi_i (\partial_h u_i), u_j),$$ and hence by Theorem \ref{GRSdecoupling} and Proposition \ref{dels}, 
   $$|h((\partial_h \pi_i)u_i,u_j)|\leq |\partial_h \pi_i|_h |u_i|_h|u_j|_h\leq Ce^{-cd}.$$
The proof of Corollary C shows that $|\partial_h u_i - \partial_i u_i|\leq Ce^{-cd}|u_i|_h.$ Hence, for the second term, by Theorem \ref{GRSdecoupling} again, $$|h(\pi_i (\partial_h u_i), u_j)|\leq |h(\partial_i u_i, u_j)|+Ce^{-cd}\leq Ce^{-cd}|\partial_i u_i|_h |u_j|_h +Ce^{-cd} \leq Ce^{-cd}.$$ This establishes the first result. For the second order term, Proposition \ref{asymdecoupling} implies $$F(h)=[\phi_n,\phi_n^{*_h}] +a,$$ where $a$ is an endomorphism whose $h$-norm is exponentially small. Observe that $[\phi_n,\phi_n^{*_{h_\infty}}]$ is a sum of endomorphisms of the $E_i$'s. We previously showed that the norm of $\phi_n^{*_h}$ differs from that of $\phi_n^{*_{h_\infty}}$ by an exponential error, and it follows that the $\textrm{Hom}(E_i,E_j)$ components of the matrix representing $[\phi_n,\phi_n^{*_h}]$ have the same exponential decay. 
\end{proof}
We say that, in our frame, a matrix is nearly block if it's uniformly bounded and if for $i\neq j$, the $\textrm{Hom}(E_i,E_j)$ components are exponentially small in $d$. We now promote to asymptotic decoupling results for any order, showing that every derivative of $H$ is nearly block. 
\begin{prop}\label{holocalbounds}
    In an adapted holomorphic radial frame, for every pair of non-negative integers $\ell_1,\ell_2,$ there exist constants $C=C(n,r,\ell_1,\ell_2,A),$ $c=c(n,r,\ell_1,\ell_2,A)$ such that on $D(r),$ for $i\neq j,$ $ |\partial_z^{\ell_1}\partial_{\overline{z}}^{\ell_2} H_{ij}^{\alpha\beta}|\leq Ce^{-cd}$, and for $i=j,$ $|\partial_z^{\ell_1}\partial_{\overline{z}}^{\ell_2} H_{ij}^{\alpha\beta}|\leq C$. 
\end{prop}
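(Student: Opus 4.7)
The plan is to bootstrap using the quasilinear elliptic PDE that $H$ satisfies as a consequence of Hitchin's equation, inducting on $\ell = \ell_1 + \ell_2$. In the adapted holomorphic radial frame one has $F(h) = \partial_{\overline{z}}(H^{-1}\partial_z H)\, d\overline{z}\wedge dz$, so Hitchin's equation rewrites (after multiplying by $H$) as
\[
\partial_{\overline{z}}\partial_z H \;=\; (\partial_{\overline{z}}H)\, H^{-1}\,(\partial_z H) \;+\; H\,[\Phi,\, H^{-1}\overline{\Phi}^T H] \;=:\; F(H,\partial H,\Phi),
\]
a fixed polynomial in $H$, $H^{-1}$, $\partial H$, $\Phi$, and $\overline{\Phi}$.

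The crucial algebraic observation is that in this frame $\Phi = \oplus_i \Phi_i$ is block diagonal. Writing $H = D + N$ with $D$ block diagonal and $N$ off-block, and expanding $H^{-1} = D^{-1} + \widetilde{N}$ by Neumann series, Lemma \ref{localhobounds} together with Proposition \ref{prop:E_iframe} gives uniform bounds on $D^{\pm 1}$ and $Ce^{-cd}$ bounds on $N$, $\widetilde{N}$, $\partial N$. Expanding $F$ out, every off-block summand carries at least one factor from $\{N, \widetilde{N}, \partial N\}$, so the off-block part of $F$ is exponentially small in $L^\infty$. The block diagonal part of $F$ is uniformly bounded because, on each block, $[\Phi_i, D_i^{-1}\overline{\Phi}_i^T D_i] = [(\phi_n)_i, (\phi_n)_i^{*_{h|_{E_i}}}]$ (the scalar part $\alpha_i I$ of $\Phi_i$ commutes with everything), and Theorem B bounds $|(\phi_n)_i|_h$ independently of $d$.

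The bootstrap then runs as follows. Inductively suppose $H$ has uniform $W^{k,p}$-bounds on $D(r^{(k)})$ for all $p \in (1,\infty)$ with off-block $W^{k,p}$-norm at most $Ce^{-cd}$, starting from $k=2$ (which follows from Lemma \ref{localhobounds}, the $L^\infty$-bound on $F$ above, and interior $L^p$ elliptic regularity for $\partial_{\overline{z}}\partial_z$). By Sobolev embedding one then has $C^{k-1,\alpha}$-control on $H$; Leibniz together with the Cauchy integral formula (which bounds $\partial_z^j\Phi$ by $Cd$ on slightly shrunken subdisks, independently of $j$, and $\partial_{\overline{z}}\Phi = 0$) then yields a uniform $W^{k-1,p}$-bound on $F$ with off-block $W^{k-1,p}$-norm at most $Ce^{-cd}$. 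Interior elliptic regularity promotes $H$ to $W^{k+1,p}$ on $D(r^{(k+1)})$, with off-block part still $\le Ce^{-cd}$. A nested sequence $D(r) \subset \cdots \subset D(r^{(k+1)}) \subset D(r^{(k)}) \subset \cdots \subset D(r_1)$ with $r^{(k)} \to r$ ensures that we remain on a disk containing $D(r)$ after any finite number of iterations, and Sobolev embedding converts these $W^{k,p}$-bounds into the desired $L^\infty$-bounds on derivatives up to any prescribed order $\ell$.

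The main technical obstacle is verifying at each iteration that the class of ``nearly block'' matrix-valued functions — uniformly bounded in $W^{k,p}$ with off-block $W^{k,p}$-norm at most $Ce^{-cd}$ — is closed under multiplication (by Leibniz and the Banach algebra property of $W^{k,p}$ for $kp>2$) and under inversion (by a Neumann-type perturbative expansion around $D$). The linear-in-$d$ factor from $|\Phi|_h \leq Cd$ must be absorbed into the exponential term $Ce^{-cd}$ by enlarging $C$ and mildly shrinking $c$ at each step. Since all constants are allowed to depend on $\ell_1, \ell_2$, they may degrade as $\ell$ grows, so this bookkeeping poses no essential difficulty once the structural block observation is in place.
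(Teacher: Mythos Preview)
Your proposal is correct and follows essentially the same strategy as the paper: bootstrap on Hitchin's equation written for the matrix $H$, using that $\Phi$ is block diagonal in the adapted frame, that the ``nearly block'' structure is stable under products and inversion, that the block-diagonal part of the commutator reduces via Theorem B to a uniformly bounded nilpotent contribution, and that linear-in-$d$ factors from $\Phi$ are absorbed into the exponential. The only notable difference is in packaging: you run the induction through interior $W^{k,p}$ elliptic regularity for $\partial_{\overline z}\partial_z$ together with Sobolev embedding, whereas the paper differentiates the equation directly and treats the mixed derivatives $(\ell_1,\ell_2)$ with $\ell_1,\ell_2>0$ and the pure derivatives $(\ell_1,0)$, $(0,\ell_2)$ by separate algebraic arguments; your route is arguably cleaner since it handles all derivative types uniformly without the extra case split.
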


 \begin{proof}
 We'll repeatedly use the basic fact that the product of two nearly block matrices is nearly block. As well, we use throughout that the Cauchy estimates give linear bounds on the derivatives of $\Phi$ in terms of $d$. 
 
 The known estimate on $\partial_{\overline{z}}\partial_z H_{ij}^{\alpha\beta}$, applied in the disk $D(r_1)$, where we remind that $r_1=\frac{1+r}{2}$, together with elliptic regularity, give the desired bounds for the cases $(\ell_1,\ell_2)=(2,0)$ or $(0,2)$ on $D(r)$. We'll now bootstrap on the Hitchin equation, which is written in our frame as
\begin{equation}\label{equation for H}
    H^{-1}\partial_{\overline{z}}\partial_z H - H^{-1}\partial_{\overline{z}}H\cdot H^{-1}\partial_z H-[\Phi,H^{-1}\overline{\Phi}^\vee H]=0.
\end{equation}
We go by induction on $k=\ell_1+\ell_2,$ with the cases $k=1$ and $k=2$ already established. Assume the result holds for $k-1$. We first treat a pair $(\ell_1,\ell_2)$ with $k=\ell_1+\ell_2$ and $\ell_1,\ell_2>0$. Since $r<1$ is arbitrary, we can assume that we have the bounds for the case $k-1$ in $D(r_1)$ rather than $D(r)$. Applying $\partial_z^{\ell_1-1}\partial_{\overline{z}}^{\ell_2-1}$ to the equation (\ref{equation for H}), we get $$H^{-1}\partial_z^{\ell_1}\partial_{\overline{z}}^{\ell_2}H -[\partial_z^{\ell_1-1}\Phi,H^{-1}\overline{\partial_{z}^{\ell_2-1}\Phi^\vee}H]+G=0,$$ where $G$ is a linear combination of matrices that are products of derivatives of $H$, $\Phi$, $\overline{\Phi^\vee}$, and derivatives of $\Phi$ and $\overline{\Phi^\vee}$. By the induction hypothesis and using the $(2,0)$ and $(0,2)$ cases, $G$ is uniformly bounded by $C$ and the $\textrm{Hom}(E_i,E_j)$ components of $G$ are bounded above by $Ce^{-cd}$, with $C$ and $c$ depending on $n,r,\ell_1,$ $\ell_2$, and $A$. The same is true for the commutator term, by the previous reasoning from the proof of Lemma \ref{localhobounds} and the Cauchy bound.  Thus, $\partial_z^{\ell_1}\partial_{\overline{z}}^{\ell_2}H$ is $H$ times a matrix that's bounded and has the correct decay in the $\textrm{Hom}(E_i,E_j)$ components. By our choice of frames, multiplying by $H$ preserves this structure, and so we've verified the result for such $(\ell_1,\ell_2).$ 

We're left to handle the pairs $(k,0)$ and $(0,k).$ The estimates follow easily from elliptic regularity for the Euclidean Laplacian: for $(k,0)$, $\partial_z^k H_{ij}^{\alpha\beta}= \partial_z^{2}(\partial_z^{k-2} H_{ij}^{\alpha\beta})$ is controlled by the sum of the norms of $\partial_{\overline{z}}\partial_z(\partial_z^{k-2} H_{ij}^{\alpha\beta})= \partial_{\overline{z}}\partial_z^{k-1} H_{ij}^{\alpha\beta}$ and $\partial_z^{k-2} H_{ij}^{\alpha\beta}$, which at this point we know satisfy the required inequalities. The same argument applies for $(0,k)$. Putting it all together, we've completed the induction step.
\end{proof}

\section{Asymptotic decoupling}
In this section, we use the estimates of the previous sections to prove Theorem A.

\begin{thm}[Theorem A]
Let $S$ be an arbitrary Riemann surface and  $(E,\overline{\partial}_E,\phi)$ a Higgs bundle on $S$ with critical set $B$. Let $R_i$ be sequence in $\R^+$ tending to infinity, and let $h_i$, be any sequence of harmonic metrics on $(E,\overline{\partial}_E,R_i\phi)$. 
Passing to a subsequence, there exist $s_i \in \mathcal{G}^\oplus$ such that the triples $(s_i^*\overline{\partial}_E,  s_i^* R_i \phi_n, s_i^* h_{R_i})$ converge in $C^\infty_{\mathrm{loc}}(S - B)$ to $(\pi_*\dbar_\infty, \pi_*\psi_\infty, \pi_* h_\infty)$ for some harmonic bundle structure $(\dbar_\infty, \psi_\infty, h_\infty)$ on $\hat{E}$ with $\psi_\infty$ nilpotent. 
\end{thm}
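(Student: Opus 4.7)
The plan is to establish $C^\infty_{\mathrm{loc}}$ bounds on local patches for the data pulled back by a carefully chosen gauge transformation, extract a subsequential limit by Arzelà--Ascoli, verify that the limit has the claimed structure using Theorem C and Proposition \ref{asymdecoupling}, and finally patch the local choices into a global element of $\mathcal{G}^\oplus$.

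Locally, fix a small disk $D \subset S - B$ over which we can trivialize $(E, \overline{\partial}_E)$ holomorphically in a manner compatible with the generalized eigenspace decomposition $E = \oplus E_j$ (possible because each $E_j$ is holomorphic on $S - B$). In this trivialization $\phi \in S_n(d, A)$ for some $d, A$ depending on $D$, and hence $R_i \phi \in S_n(R_i d, A)$. For each $i$, Proposition \ref{prop:E_iframe} provides an adapted holomorphic radial frame on a subdisk $D' \subset D$; let $s_i$ be the gauge transformation carrying the fixed trivialization to the adapted frame. Since the adapted frame is obtained by assembling radial frames for the $E_j$ separately, $s_i$ preserves the splitting, so $s_i \in \mathcal{G}^\oplus$. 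The adapted frame is holomorphic for $\dbar_E$, so $s_i^* \dbar_E$ is the trivial $\dbar$ on $D'$ for every $i$.

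By Proposition \ref{holocalbounds}, the matrix of $s_i^* h_i$ is bounded in $C^k$ for every $k$ with off-block entries of size $O(e^{-cR_i d})$. Since $|R_i \phi_n|_{h_i}$ is bounded by Theorem B and the frame matrix of $h_i$ is controlled, the matrix of $s_i^* R_i \phi_n$ has bounded entries; Cauchy estimates using holomorphicity then give $C^k$-bounds for every $k$. Arzelà--Ascoli yields a subsequence along which $s_i^* h_i \to h_\infty$ and $s_i^* R_i \phi_n \to \psi_\infty$ in $C^\infty_{\mathrm{loc}}(D')$. The limit $h_\infty$ is block-diagonal with respect to the splitting (exponential decay of off-block entries), $\psi_\infty$ is nilpotent (a closed pointwise condition), and $\psi_\infty$ commutes with $\phi_s$ because each $s_i^* R_i \phi_n$ does (as $s_i \in \mathcal{G}^\oplus$ and $\phi_n$ commutes with $\phi_s$). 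Passing to the limit in Proposition \ref{asymdecoupling} gives $F(h_\infty) + [\psi_\infty, \psi_\infty^{*_{h_\infty}}] = 0$; the conditions $[\phi_s, \phi_s^{*_{h_\infty}}] = [\phi_s, \psi_\infty^{*_{h_\infty}}] = 0$ follow from block-diagonality of $h_\infty$ together with the fact that $\phi_s$ acts as a scalar on each $E_j$. Thus the limiting triple on $D'$ is precisely the pushforward of a nilpotent harmonic bundle structure on $\hat{E}$.

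To globalize, exhaust $S - B$ by relatively compact open sets $U_k$, each covered by finitely many trivializing disks of the above type, run the local analysis on every disk, and extract a diagonal subsequence. The main obstacle is bookkeeping: the locally defined $s_i$'s must be patched into a single global $s_i \in \mathcal{G}^\oplus$. To accomplish this, fix a smooth global splitting-adapted reference metric $h_0$ on $E|_{S-B}$ and pin down the local adapted frames uniquely by prescribing initial data at a basepoint of each disk relative to $h_0$; the remaining ambiguity in each local frame is a block-diagonal holomorphic gauge in $\mathcal{G}^\oplus$, and a standard Čech normalization on a trivializing cover produces compatible choices. Once the global $s_i$ are in hand, the local $C^\infty_{\mathrm{loc}}$ convergence together with uniqueness of limits on overlaps gives the stated convergence on all of $S - B$.
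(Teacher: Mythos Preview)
Your local analysis mirrors the paper's Proposition~\ref{prop: bundle convergence} closely and is correct: the adapted holomorphic radial frames, the $C^k$ bounds from Proposition~\ref{holocalbounds}, the normal-family argument for $R_i\phi_n$, and the identification of the limiting equations via Proposition~\ref{asymdecoupling} all go through as you describe.

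The genuine gap is in the last paragraph. Your ``Čech normalization'' is not a standard procedure and, as stated, does not produce a global $s_i$. On an overlap $D_\alpha\cap D_\beta$ the two adapted frames differ by a block-diagonal holomorphic transition $g_i^{\alpha\beta}$ that \emph{depends on $i$}, because the radial-frame construction parallel-transports with respect to the $i$-dependent Chern connection of $h_i$. Prescribing initial data at one basepoint per disk relative to a fixed $h_0$ does nothing to force $g_i^{\alpha\beta}$ to be trivial, since the frames diverge as soon as you transport away from the basepoint. To obtain a global $s_i$ you need local corrections $t_i^\alpha$ with $(t_i^\alpha)^{-1} g_i^{\alpha\beta} t_i^\beta$ equal to some \emph{fixed} cocycle, and---crucially---these $t_i^\alpha$ must themselves converge as $i\to\infty$ so that the convergence you established in the local frames survives. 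Neither existence nor convergence of such corrections is automatic, and this is not a bookkeeping issue.

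The paper supplies exactly this missing step. First it notes that the transitions $g_i^{\alpha\beta}$ are holomorphic and bounded (by control on $H_R$), hence form a normal family and sub-converge to a limiting cocycle $g_\infty^{\alpha\beta}$ defining a bundle $\hat{E}_\infty$. Then it invokes Uhlenbeck's patching argument \cite[Proposition~3.2]{Uh1}: after reducing to a compact structure group via the square-root gauge $\hat\rho_R=\sqrt{\hat H_R^{-1}\hat H_\infty}$ and exhausting the non-compact base by compact subsurfaces, Uhlenbeck's lemma produces smooth local corrections converging to the identity that gauge $g_i^{\alpha\beta}$ to $g_\infty^{\alpha\beta}$. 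The global $s_i$ is then the composition of the resulting smooth bundle isomorphism $\hat\sigma_i\colon \hat{E}_\infty\to \hat{E}$ with a fixed isomorphism $\hat{E}\to \hat{E}_\infty$. This patching is the substantive content of the proof of Theorem~A beyond the local estimates, and your proposal needs to supply it rather than assert it.
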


We divide the proof into two parts. First, we show in Proposition \ref{prop: bundle convergence} that there is a harmonic bundle $(E_\infty, \psi_\infty, h_\infty)$ to which the data $(E, \dbar_E, R\phi_n, h_R)$ sub-converge with respect to fixed local charts. We then use Uhlenbeck's patching argument to show that one can take a sequence of smooth bundle isomorphisms $\sigma_R: E_\infty \to E$ such that this convergence can be viewed as convergence of the triple $(\sigma_R^*\dbar_E, \sigma_R^* (R \phi_n), \sigma_R^* h_R)$ on the bundle $E_\infty$. We recover Theorem A by taking the gauge transformations $s_R = \sigma_R\circ \mu$ for some fixed isomorphism $\mu: E \to E_\infty$.

\subsection{Constructing the limiting bundle}

In Theorem A, we require that our gauge transformations $s_R$ lie in the subgroup $\mathcal{G}^{\oplus}$ of the gauge group that preserves the decomposition of $E$ into eigenspaces of $\phi_s$. To arrange this, we will make sure $E_\infty$ has a similar decomposition and all maps preserve this decomposition. In other words, we will construct $E_\infty$ as the pushforward of a bundle $\hat{E}_\infty$ over the spectral curve $\hat{S}$ of $S - B$, and ask that the maps $\sigma_R$ and $\mu$ are induced from isomorphisms of $\hat{E}$ and $\hat{E}_\infty$. Recall that $\hat{E}$ is the bundle over the spectral curve whose pushforward is $E$. Similarly, if $U = \bigsqcup U_\alpha$ is a covering of $S - B$ by open sets, we let $\hat{U} = U \times_{S - B} \hat{S}$ be the induced covering of the spectral curve, and if $\hat{\tau}$ is an isomorphism of bundles over the spectral curve, then $\tau$ is the corresponding isomorphism between their pushforwards to $S-B$.

\begin{prop} \label{prop: bundle convergence} Let $S$ be an arbitrary Riemann surface and  $(E,\overline{\partial}_E,\phi)$ a Higgs bundle on $S$ with critical set $B$. Let $\mathcal{R}$ be a proper sequence in $\R^+$ and let $h_R$, $R \in \mathcal{R}$, be any sequence of harmonic metrics on $(E,\overline{\partial}_E,R\phi)$. One can choose an open covering $U \to S-B$, a subsequence $\mathcal{R}'$, and a sequence $\hat{\tau}_R$ ($R \in \mathcal{R}'$) of holomorphic trivializations of $\hat{E}$ over $\hat{U}$, such that $\tau_R^*h_R$ and $\tau_R^*R\phi_n$ converge in $C^\infty_\mathrm{loc}(U)$, and the gluing maps $(\tau_R)_1^{-1} \circ (\tau_R)_2$ converge in $C^\infty_\mathrm{loc}(U \times_S U)$.
\end{prop}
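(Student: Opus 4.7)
The idea is to apply the adapted holomorphic radial frames of Proposition \ref{prop:E_iframe} on a suitable open cover of $S-B$ to produce the trivializations $\hat\tau_R$, and then to extract a $C^\infty_{\mathrm{loc}}$-convergent subsequence using the uniform estimates of Theorem B and Proposition \ref{holocalbounds} together with Arzel\`a--Ascoli and Cauchy estimates for holomorphic functions.

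First, fix a countable, locally finite open cover $\{U_\alpha\}$ of $S-B$ by relatively compact holomorphic coordinate disks, each contained in a slightly larger disk $\tilde U_\alpha \Subset S-B$ on which $E$ admits a holomorphic trivialization realizing the decomposition $E=\oplus_i E_i$. After rescaling $\tilde U_\alpha \cong D(1)$, there are constants $d_\alpha, A_\alpha > 0$ with $\phi|_{\tilde U_\alpha} \in S_n(d_\alpha, A_\alpha)$, so $R\phi|_{\tilde U_\alpha} \in S_n(R d_\alpha, A_\alpha)$ for $R \geq 1$; hence the estimates of the preceding sections apply on $U_\alpha$ with constants independent of $R$. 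Taking $U = \bigsqcup_\alpha U_\alpha$ provides the required covering.

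For each $R$ and each $\alpha$, let $\hat\tau_R^\alpha$ be an adapted holomorphic radial frame on $U_\alpha$ as in Proposition \ref{prop:E_iframe}; by construction it is a direct sum of holomorphic frames on the eigenbundles $E_i$, hence equivalent to a holomorphic trivialization of $\hat E$ over $\hat U_\alpha$. Write $H_R^\alpha := (\tau_R^\alpha)^* h_R$. Propositions \ref{prop:E_iframe} and \ref{holocalbounds} together yield uniform (in $R$) $C^\infty_{\mathrm{loc}}$ bounds on $H_R^\alpha$, and imply that $H_R^\alpha$ is uniformly bounded below as a positive definite matrix (the diagonal blocks are two-sided bounded by Proposition \ref{prop:E_iframe}, and the off-diagonal blocks decay exponentially in $Rd_\alpha$ by Proposition \ref{holocalbounds}). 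Since $(\tau_R^\alpha)^* R\phi_n$ consists of holomorphic functions and Theorem B bounds $|R\phi_n|_{h_R}$ uniformly in $R$, the two-sided bound on $H_R^\alpha$ converts this into a uniform $C^0$ bound on the matrix entries of $(\tau_R^\alpha)^* R\phi_n$, which the Cauchy estimates upgrade to uniform $C^\infty_{\mathrm{loc}}$ bounds.

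On an overlap $U_\alpha \cap U_\beta$ the gluing map $T_R^{\alpha\beta} := (\tau_R^\alpha)^{-1}\tau_R^\beta$ is a holomorphic $GL$-valued function, block-diagonal with respect to the eigenspace decomposition. The identity $(T_R^{\alpha\beta})^* H_R^\alpha T_R^{\alpha\beta} = H_R^\beta$, combined with uniform two-sided bounds on both $H_R^\alpha$ and $H_R^\beta$, forces the columns of $T_R^{\alpha\beta}$ to have uniformly bounded Euclidean length; the same reasoning applied to $T_R^{\beta\alpha}$ controls the inverse. The Cauchy estimates then promote these $C^0$ bounds to uniform $C^\infty_{\mathrm{loc}}$ bounds on $T_R^{\alpha\beta}$ and $(T_R^{\alpha\beta})^{-1}$. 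A diagonal extraction over the countable cover $\{U_\alpha\}$ finally produces a single subsequence $\mathcal R' \subset \mathcal R$ along which $(\tau_R^\alpha)^* h_R$, $(\tau_R^\alpha)^* R\phi_n$, and all the gluing maps $T_R^{\alpha\beta}$ converge in $C^\infty_{\mathrm{loc}}$.

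\textbf{Expected main obstacle.} The non-trivial step is the uniform bound on the transition matrices. Since $h_R$ typically blows up in any fixed holomorphic trivialization, one cannot bound $T_R^{\alpha\beta}$ by comparing both frames to a common reference. The key observation is that the adapted radial frames are approximately $h_R$-orthonormal by their very construction, so the transitions $T_R^{\alpha\beta}$ are approximately $h_R$-unitary, and hence have entries bounded in an absolute, frame-independent sense.
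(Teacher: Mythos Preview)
Your proof is correct and follows essentially the same approach as the paper: cover $S-B$ by disks on which $\phi\in S_n(d_\alpha,A_\alpha)$, use the adapted holomorphic radial frames of Proposition~\ref{prop:E_iframe}, obtain uniform $C^\infty$ bounds on the metric matrices via Proposition~\ref{holocalbounds}, invoke Theorem~B and Cauchy estimates for $R\phi_n$, and control the holomorphic transition functions by a normal-family argument. Your handling of the transition matrices via the identity $(T_R^{\alpha\beta})^* H_R^\alpha T_R^{\alpha\beta} = H_R^\beta$ is spelled out more explicitly than in the paper, but the underlying idea---that the adapted frames are approximately $h_R$-orthonormal, forcing the transitions into a compact subset of $GL$---is identical.
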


\begin{proof}
For each point $p \in S - B$, choose a local coordinate at $p$ such that $D(1)$ is compactly contained in $S-B$, and let $d$ and $A$ be such that $\phi \in S_n(d,A)$ on $D(1)$. Let $U = \sqcup_\alpha U_\alpha$ be a locally finite covering of $S-B$ by open sets $U_\alpha \cong D(1/2)$ in these coordinates. From Corollary \ref{localbound}, we see that the curvature $F(h_R)$ of the Chern connection $\nabla_R$ is locally bounded on $S$ independently of $R$. Let $\hat{h}_R$ be the metric on $\hat{E}$ induced from $h_R$ by restriction to each generalized eigenspace. From Corollary \ref{cor: curvature of sub-bundles} it follows that the curvature of the Chern connection of $\hat{h}_R$ is also locally bounded on $S$. Let $p\mapsto C(p)$ be a continuous function on $S$ dominating this curvature. 

Since $U_\alpha$ is contractible, each $\hat{U}_\alpha$ is a union of copies of $U_\alpha$. By Proposition \ref{prop:E_iframe}, we can choose a frame $\hat{\tau}_R$ of $\hat{E}$ over each $\hat{U}_\alpha$ such that if $\hat{H}_R := \hat{\tau}_R^*\hat{h}_R$ is the hermitian metric in this frame, then $\hat{H}_R$ is the identity matrix at each $\hat{p}_\alpha$ and the connection matrix $\hat{H}_R^{-1} \partial \hat{H}_R$ is bounded in terms of $C(p)$. By integration, we get control on $\hat{H}_R$ on all of $\hat{U}_\alpha$. Let $H_R = \tau_R^* h_R$ be the entire metric in this frame (so that $\hat{H}_R$ is just the block-diagonal entries of $H_R$). Then by asymptotic orthogonality, Theorem \ref{GRSdecoupling}, and the bound on $\hat{H}_R$, we see that $H_R$ is also controlled at $p_\alpha$ by $C(p)$. It furthermore follows from Proposition \ref{holocalbounds} that each derivative of the matrix $H_R$ is also bounded in terms of $C(p)$. Hence by the Arzel{\`a}-Ascoli theorem, there is a smoothly convergent subsequence of the $H_R$, converging to a metric $H_\infty$. Using again the asymptotic orthogonality theorem as $R$ tends to infinity, it follows that $H_\infty$ is block-diagonal, i.e., that it is the pushforward of a metric $\hat{H}_\infty$ on each $\hat{U}_\alpha$.

We next show that upon passing to a further subsequence we can arrange that $R\Phi_n := \tau_R^*R\phi_n$, the pullback of the nilpotent part of the rescaled Higgs field, also locally converges. By the corollary to Theorem B, the norm $|R\phi_n|_{h_R}$ is bounded in terms of $C(p)$. Since $|R\phi_n|_{h_R} = |R\Phi_n|_{H_R}$ and $H_R$ are controlled, the Euclidean norm $|R\Phi_n|$ is also controlled by $C(p)$. Since both $\tau_R$ and $R\Phi_n$ are moreover holomorphic, $R\Phi_n$ form a normal family, so there is a locally convergent subsequence, with limit $\Psi_\infty$.

Now let $\pi_1$ and $\pi_2$ be the two projections from $\hat{U} \times_{\hat{S}} \hat{U}$ to $\hat{U}$, and let $(\hat{\tau}_R)_j = \pi_j^* \hat{\tau}_R$, $j=1,2$. Set $\hat{g}_R = (\hat{\tau}_R)_1^{-1} \circ (\hat{\tau}_R)_2$. We may view $\hat{g}_R$ as a matrix-valued function on $\hat{U} \times_{\hat{S}} \hat{U}$. Since the trivializations $\hat{\tau}_R$ are holomorphic, so are the $\hat{g}_R$. Since the matrices $H_R$ are locally bounded, the $\hat{g}_R$ land in a compact subset of $GL({n_i}, \C)$ and therefore form a normal family. Hence, we can extract a sub-sequential $C^{\infty}_{\mathrm{loc}}$ limit $\hat{g}_\infty$.
\end{proof}

With $\hat{g}_R$ as above, the cocycle condition $\hat{g}_{12}\hat{g}_{23} = \hat{g}_{13}$ on $\hat{U} \times_{\hat{S}} \hat{U} \times_{\hat{S}} \hat{U}$ continues to hold in the limit, so $\hat{g}_\infty$ defines a holomorphic bundle $\hat{E}_\infty$ on $\hat{S}$ by descent. Similarly, $H_\infty$ and $\Phi_\infty$ continue to obey the corresponding cocycle conditions on $U \times_{S-B} U$, so they determine a metric $h_\infty$ and a Higgs field $\psi_\infty$ on $E_\infty$. 

Since the convergence is smooth, one can take limits in the estimates of Proposition \ref{asymdecoupling} to see that the metric $h_\infty$ is a solution to the decoupled equations
$$F(h_\infty) + [\psi_\infty,\psi_\infty^{*_{h_\infty}}]=0, \hspace{1mm} [\phi_s,\phi_s^{*_{h_{\infty}}}] =[\phi_s,\psi_\infty^{*_{h_\infty}}]=0,$$
in which the latter two merely express that both $h_\infty$ and $\psi_\infty$ are descended from their hatted versions on the spectral curve.

\subsection{Limits of vector bundles and proof of Theorem A}

Having constructed the tuple $(E_\infty, \dbar_\infty , h_\infty, \psi_\infty)$ solving the decoupled Hitchin equations \eqref{decoupled equations} as a limit of $(E, \dbar, h_R, R\phi_n)$ in the sense of Proposition \ref{prop: bundle convergence}, it is tempting to stop and say we are done. Indeed, a reasonable notion of a sequential limit of such tuples over $S-B$ is such a tuple over the smooth space $(S - B) \times (\mathcal{R} \cup \{\infty\})$ (with smooth structure on the latter given by its embedding into $\mathbb{RP}^1$) together with an isomorphism between its restriction to $(S-B) \times \mathcal{R}$ and our original sequence. This is exactly what we construct in Proposition \ref{prop: bundle convergence}, the choice of isomorphism being $\tau_R$.

In fact, such limits are even unique, in the sense that if $(E, \dbar, h_R, \phi_R)_j$, $j = 1,2$, are two tuples over $(S - B) \times (\mathcal{R} \cup \{\infty\})$, and $\tau$ is an isomorphism between their restrictions to $(S-B) \times \mathcal{R}$, then there exists an isomorphism between the restrictions to $(S-B) \times \{\infty\}$. This is proved by the same logic as Proposition \ref{prop: bundle convergence}: the sequence $\tau_R$ of isomorphisms, viewed as sections of $\mathrm{Hom}(E_1, E_2)$, are holomorphic and bounded, so they have a convergent subsequence \cite[Prop 2.3.15]{DK}. This Hausdorffness statement shows that the limiting tuple $(E_\infty, \dbar_\infty , h_\infty, \psi_\infty)$ depends a priori only on the subsequence of $\mathcal{R}$ and not on the local trivializations. The upshot is that it is not really necessary to transport this convergence back to the bundle $E$ as we do in Theorem A, but we will nonetheless. 

\begin{proof}[Proof of Theorem A]
Following the strategy outlined at the beginning of this section, our aim is to construct a sequence of smooth bundle isomorphisms $\hat{\sigma}_R: \hat{E}_{\infty} \to \hat{E}$ such that the triple $(\hat{\sigma}_R^* \dbar_E, \hat{\sigma}_R^*(R\phi_n), \hat{\sigma}_R^*h_R)$ converges smoothly locally on $E_\infty$ to $(\dbar_\infty, \psi_\infty, h_\infty)$. Since $E$ and $E_\infty$ are both presented in terms of their trivializations on $U$, a bundle isomorphism between them is equivalently a refinement $U'$ of the covering $U$, and smooth matrix-valued functions $\hat{\sigma}_R$ on $U'$ such that $\hat{g}_\infty = (\hat{\sigma}_R)_1^{-1}\hat{g}_R (\hat{\sigma}_R)_2$ on $\hat{U}' \times_{\hat{S}} \hat{U}'$. As long as $\hat{\sigma}_R$ converges locally smoothly to the identity as $R$ tends to infinity, the smooth convergence of $H_R$ and $R\Phi_n$ to $H_\infty$ and $\Psi_\infty$ will imply smooth convergence on $E_\infty$.

Uhlenbeck's patching argument \cite[Proposition 3.2]{Uh1} establishes the existence of such smooth $\hat{\sigma}_R$ when the base manifold and the gauge group are both compact. The proof mirrors the vanishing of $H^1$ for fine sheaves of abelian groups, but in the nonabelian setting.

To apply this argument, we first address the non-compactness of the gauge group. For each $R$, let $\hat{\rho}_R = \sqrt{{\hat{H}_R}^{-1}\hat{H}_\infty}$ as a matrix-valued function on $\hat{U}$. Then $\hat{\rho}_R$ converges smoothly to the identity, and pulls back the metric $\hat{H}_R$ to $\hat{H}_\infty$. The remaining gauge freedom is in the unitary group of $\hat{H}_\infty$, a compact group.

Second, we address the (usual) case where $\hat{S}$ is noncompact. Fix an exhaustion by compact subsurfaces $K_i$ with boundary. On each compact subsurface, Uhlenbeck's argument produces a sequence of gauge transformations $\hat{\sigma}_{i,R}$ satisfying the requirements. Since the complement of $\overline{K}_i$ is an open surface and the gauge group is connected, the bundles $\hat{E}$ and $\hat{E}_\infty$ are both trivializable on the complement of $K_i$. Similarly, $\hat{\sigma}_{R,i}$ can be extended in some way to a smooth bundle isomorphism on all of $\hat{S}$. Of course, since our aim is $C^\infty_{loc}$ convergence, the choice of extension is not important. Then the desired sequence of gauge transformations is of the form $\hat{\sigma}_{R, i(R)}$, for any function $i(R)$ tending to infinity.
\end{proof}

\section{Harmonic maps to symmetric spaces and buildings}\label{hars}
Throughout the section, let $S$ be a Riemann surface with universal cover $\tilde{S}$. 

\subsection{Lie groups and Lie algebras}\label{Liegroupssection}
 Let $G$ be a connected and semisimple Lie group with no compact factors and with Lie algebra $\g$. Let $\nu$ be an $\textrm{Ad}(G)$-invariant bilinear form on $\g$ that is negative definite on Lie subalgebras of compact subgroups (for example, one could take the product of the Killing forms of the simple factors). Given a maximal compact subgroup $K\subset G,$ we always write the Lie algebra as $\kk\subset \g$, and the $\nu$-orthogonal complement as $\p$. Note that $\p$ does not depend on the choice of $\nu$.

Let $\textrm{ad}:\g\to \textrm{End}(\g)$ be the adjoint representation. An element $X\in\g$ is semisimple if $\textrm{ad}(X)\in \textrm{End}(\g)$ is a semisimple endomorphism, and nilpotent if $\textrm{ad}(X)$ is nilpotent. The rank of $G$ is the dimension of a maximal abelian subalgebra of $\g$ contained in $\p$. We refer to such an algebra as a maximal split toral subalgebra of $\p$ (we reserve the word Cartan for the category of complex Lie algebras). The word toral is meant to emphasize that it necessarily consists of semisimple elements \cite[Lemma 8.1]{Hu}.

Fix a maximal split toral subalgebra $\ft$ of $\p$. The Weyl group of $\ft$ (sometimes called the restricted Weyl group of $\ft$), denoted $W,$ is the normalizer of $\ft$ in $K$ modulo its centralizer. Let $K^{\C}\subset G^{\C}$ and $\ft^{\C}\subset \p^{\C}\subset \g^{\C}$ be the complexifications of $K$, $\ft$, and $\p$ respectively. Let $\mathcal{O}(\p^{\C})^{K^{\C}}$ be the ring of $K^{\C}$-invariant complex polynomials on $\p^{\C}$, and $\mathcal{O}(\ft^{\C})^W$ the ring of $W$-invariant polynomials on $\ft^{\C}$. The real version of the Chevalley restriction theorem says that the restriction map from $\mathcal{O}(\p^{\C})^{K^{\C}} \to \mathcal{O}(\ft^{\C})^W$ is an isomorphism \cite[Theorem 6.10]{Helg}, \cite[Theorem 7]{Vin}. Another theorem attributed to Chevalley asserts that $\mathcal{O}(\ft^{\C})^W$ is free on $r$ generators, where $r$ is the dimension of $\ft$, i.e., the rank of $G$ \cite[page 54]{Hum}. Thus, $\mathcal{O}(\p^{\C})^{K^{\C}}$ is free on $r$ generators.

\subsection{Symmetric spaces and buildings}\label{sec: symmetric spaces}
A symmetric space of non-compact type is a connected and simply connected Riemannian manifold $(N,\nu)$ with an inversion symmetry about each point, and whose de Rham decomposition contains only non-compact symmetric spaces and no factors of $\R$ (see \cite[Chapter V, \S 4]{Helg}). The isometry group of $(N,\nu)$ is always semi-simple with no compact factors and trivial centre.

A point $x$ in $N$ canonically determines a Lie algebra $\g$ of Killing fields and an orthogonal decomposition $\g = \kk_x \oplus\p_x$, with $\kk_x$ the Killing fields vanishing at $x$. Since $\nu$ is invariant under $G=\textrm{Aut}_0(N)$, $\nu_x$ determines $\nu$. The subgroup $K = \mathrm{Stab}_G(x)$ is maximally compact, with Lie algebra $\kk_x$, but $N$ does not uniquely determine the Lie group $G$: from any cover of $G$, the quotient by the maximal compact subgroup of this cover recovers $N.$ Via the Maurer-Cartan form of $G$, the metric $\nu$ on $N$ is equivalent to a $K$-invariant positive definite bilinear form on $\p_x$. The rank of the symmetric space $N$ is the largest dimension of a subspace of its tangent space at any point on which the sectional curvature vanishes, or equivalently the dimension of any maximal split toral subalgebra of $\p_x$. Under this definition, the rank is equal to the rank of the corresponding Lie group.

As we'll explain in section 8, affine buildings arise as asymptotic cones of symmetric spaces, and harmonic maps to symmetric spaces converge in a suitable sense to harmonic maps to buildings. Since we're not going to delve into the geometry of buildings so deeply, we won't give full definitions. For definitions and references, we refer the reader to \cite{Cu}, \cite[section 2]{KNPS}, and \cite[section 8]{LTW}. Very informally, an affine Euclidean building $(\mathcal{B},d)$ modelled on a maximal split toral subalgebra $\ft$ of $\p\subset \g$ is a non-positively curved (NPC) length space comprised of flat subspaces called apartments, which are copies of $\ft$. The flats are glued along hyperplanes, called walls, in a way that respects particular affine Weyl group actions on the apartments. Note that an affine building need not be locally compact. An affine chart $\ft\to A\subset \mathcal{B}$ is an identification from $\ft$ to an apartment. An atlas $\{f:\ft\to \mathcal{B}\}_{f\in A}$ for $\mathcal{B}$ is a cover by affine charts that is compatible with the structure of the building.

\subsection{Harmonic maps}\label{hmapsdefinitions}
Since we will work with harmonic maps to both symmetric spaces and buildings, we define harmonic maps in the metric space context. We fix a metric $\sigma$ on $S$ that is compatible with the complex structure of $S$, and use $\sigma$ as well to denote the lift to $\tilde{S}$. We use the conventions of Korevaar-Schoen \cite{KS}. Let $(M,d)$ be a complete and NPC length space and let $h:\tilde{S}\to (M,d)$ be a locally Lipschitz map. Korevaar-Schoen \cite[Theorem 2.3.2]{KS} associate a locally $L^1$ measurable metric $g=g(h)$, which plays the role of the pullback metric. If $M$ is a smooth manifold, and the distance $d$ is induced by a Riemannian metric $\delta$, then $g(h)$ is represented almost everywhere by the pullback metric $h^*\delta$. The energy density is the locally $L^1$ function 
\begin{equation}\label{mease}
    e(h)=\frac{1}{2}\textrm{trace}_\sigma g(h).
\end{equation}
On any relatively compact subsurface $U\subset S,$ the total energy is
\begin{equation}\label{tot}
    \mathcal{E}(U,h) = \int_U e(h)dA,
\end{equation}
where $dA$ is the area form of $\sigma$. We comment here that the measurable $2$-form $e(h)dA$ does not depend on the choice of compatible metric $\sigma$, but only on the complex structure. 

\begin{defn}\label{def: harmonic}
$h$ is harmonic if for all relatively compact subsurfaces $U\subset S,$ $h$ is a minimizer for $f\mapsto \mathcal{E}(U,f)$ among the Lipschitz maps on $\overline{U}$ that agree with $h$ on $\partial U.$
\end{defn}
\begin{remark}
    In \cite{KS}, the authors define Sobolev spaces $W^{1,2}(\cdot, M)$ for maps to $(M,d)$, on which one can define total energies of functions. They define $h$ to be harmonic if for all relatively compact $U,$ it minimizes the total energy among $W^{1,2}$ maps with the same trace (see \cite[\S 1.12]{KS}). By Theorem 2.2 and Theorem 2.4.6 in \cite{KS}, harmonic maps are locally Lipschitz, and it suffices to minimize among locally Lipschitz maps.
\end{remark}

In the Riemannian setting, $h$ is harmonic if and only if it satisfies a second order semilinear elliptic equation: we view the derivative $dh$ as a section of $f^*TM\otimes T^*S$. Let $\partial h$ be the $(1,0)$-part of the derivative of $h$, and $\nabla^{0,1}$ the $(0,1)$-part of the connection on $h^*TM \otimes_\R \C$ induced from the Levi-Civita connection of $M$, as well as its natural extension to $h^*TM\otimes_\R \C$-valued forms. Then $h$ is harmonic if and only if
\begin{equation}\label{hmapequation}
    \nabla^{0,1}\partial h = 0.
\end{equation}

\subsection{Harmonic maps to symmetric spaces and $G$-Higgs bundles}
Let $(G,K)$ be as in section \ref{Liegroupssection}, and let $\g^{\C} = \kk^{\C} \oplus\p^{\C}$ be the $K^{\C}$-invariant decomposition of the Lie algebra. If $P$ is a holomorphic principal $K^{\C}$-bundle on $S$, we write $\textrm{ad}\p^{\C} = P \times_{K^{\C}} \p^{\C}$.  

\begin{defn}\label{GHiggs}
A $G$-Higgs bundle is a pair $(P,\phi),$ where $P$ is a holomorphic principal $K^{\C}$-bundle on $S$ and $\phi$ is a holomorphic section of $\textrm{ad}\p^{\C}\otimes \mathcal{K}$ called the Higgs field. 
\end{defn}

Hitchin's equations for $G$-Higgs bundles are as follows. A reduction of the structure group $P^h\subset P$ to $K \subset K^{\C}$ defines a real structure $*_h$ as well as a Chern connection $A_h$ on $P^h$. The curvature $F(h)$ of the Chern connection is a $2$-form valued in the adjoint bundle of $P^h$.
\begin{defn}
    We say that $P^h$ solves Hitchin's self-duality equations if 
    \begin{equation}\label{SD}
        F(h) + [\phi,\phi^{*_h}]=0.
    \end{equation}
    We call $(P,\phi,*_h)$ a harmonic $G$-bundle.
\end{defn}
The reduction $P^h$ gives a solution to (\ref{SD}) if and only if  $(P^h \times_K G,A_h + \phi + \phi^{*_h})$ is a flat principal $G$-bundle.

A $\rho$-equivariant harmonic map $h$ from $\tilde{S}$ to a symmetric space $(N,\nu)$ is equivalent to a harmonic $G$-bundle. We now explain this equivalence. Here we're allowing $\pi_1(S)$ to be trivial. Let $N_\rho\to S$ be the flat bundle associated to the action of $\pi_1(S)$ on $\tilde{S}$ and $\rho$ on $N$. From $\nu$, $N_\rho$ inherits a metric on the fibers. Let $Q\to S$ be the principal $G$-bundle whose fiber at $z \in S$ is the $G$-torsor of isometries from $N$ to the fiber of $N_\rho$ at $z$, which inherits a flat connection $A$ (an equivariant $\g$-valued $1$-form satisfying certain properties) from the flat connection on $N_\rho$. If we fix a point $x$ in $N$ with stabilizer $K$ and corresponding decomposition $\g = \kk\oplus \p$, then the map $h$ yields a section of $N_\rho$, which we'll still denote by $h,$ and thereby gives a reduction of the structure group of  $Q^h\subset Q$ to $K$, whose fiber at $z$ is the subset of isometries sending $x$ to $h(z)$. Using the Maurer-Cartan form, the pullback of the vertical tangent bundle of $N_\rho$ by the section $h$ is identified with $Q^h \times_K \p$. Write $\nabla_h$ for the pullback of the Levi-Civita connection by $h$ to $Q^h \times_K \p$. Let $P$ be the complexification of $Q^h$, now a $K^{\C}$-bundle. Complexifying the tensor bundle to $P \times_{K^{\C}} \p^{\C},$ the $(0,1)$-part of $\nabla_h$ determines a complex structure $\overline{\partial}$. If we denote by $\phi$ the $(1,0)$-part of the pullback of the Maurer-Cartan form under $h,$ then the equation (\ref{hmapequation}) exactly says that it is holomorphic with respect to our complex structure, i.e., $$\overline{\partial}\phi=0.$$
Putting everything together, we have a harmonic $G$-bundle in $(P,\phi,*_h)$. On the other hand, if we start from $(P,\phi)$ and pick a point in $N$, the reduction of structure group $P^h$ is equivalent to a map from the universal cover to $(N,\nu),$ which is necessarily equivariant with respect to the holonomy of the connection $A_h + \phi+\phi^{*_h}.$ The equation $\overline{\partial}\phi=0$ is equivalent to the harmonicity of this map.

A faithful representation $\sigma:G\to \textrm{GL}(n,\C)$ turns a $G$-Higgs bundle into an ordinary Higgs bundle. Indeed, the associated vector bundle $E=\tilde{S}\times_{\sigma} \C^n$ inherits a complex structure from $P$, and $d\sigma:\g\to \mathfrak{gl}(n,\C)$ induces a map from the sheaf of sections of $\textrm{ad}\p^{\C}$ to that of $\textrm{End}(E)$. The Higgs field $\phi$ for the $G$-Higgs bundle becomes a Higgs field $\phi_\sigma$ on $E$, and a solution to (\ref{SD}) determines a solution to (\ref{1}) (see also \cite[section 2.4]{SS}). The analysis of harmonic maps to symmetric spaces then reduces to that of Hitchin's equations for ordinary Higgs bundles. We often compute in the adjoint representation $G\to \textrm{Aut}(\g)$, which takes a $G$-Higgs bundle to a Higgs bundle with underlying bundle modeled on $\g^{\C}$.  

Finally, we point out that the notions introduced for Higgs bundles in section \ref{prelimhiggs} have abstract formulations. 
To prove the analogue of Proposition \ref{JCdec}, apply the argument of Proposition \ref{JCdec} in the adjoint representation. Since the Lie group is semisimple, the adjoint representation is injective and we obtain a decomposition $\phi=\phi_s+\phi_n$, with $\phi_s$ and $\phi_n$ meromorphic sections of $\textrm{ad}\p^{\C}$. The proof of Proposition \ref{JCdec} shows that $\phi_s$ and $\phi_n$ are holomorphic on the complement of the critical set for the ordinary Higgs bundle associated with the adjoint representation.

The critical set can be defined without reference to a representation; we gave such a definition in \cite[section 3.4]{SS} for generically semisimple Higgs bundles, and the construction can be easily adapted to the general case. Since we work in representations to apply our local analytic results, we won't need such an abstract definition. However, it must be remarked that if we start with a $G$-Higgs bundle with critical set $B_0$, and then choose a representation to obtain an ordinary Higgs bundle with critical set $B$, then $B$ might be larger than $B_0$. 

\subsection{The Hitchin map}\label{HBsection}
We continue with $(G,K)$ as above with Lie algebra splitting $\g=\kk\oplus \p$. As well, fix  a maximal split toral subalgebra $\ft\subset \p$ with Weyl group $W$. Recall that the affine quotient $\ft^{\C}//W$ is the spectrum of the ring of $W$-invariant polynomials on $\ft^{\C}$.  

Let $\mathcal{K} \otimes \ft^{\C}$ be the tensor product of the canonical bundle $\mathcal{K}$ with the trivial holomorphic $\ft^{\C}$-bundle over $S$. We now consider the quotient by the Weyl group action on the total space of $\mathcal{K}\otimes\ft^{\C}$ coming from the trivial action on $\mathcal{K}$ and the standard action on the vector space $\ft^{\C}$. Since $\mathcal{K}\otimes\ft^{\C}$ is not affine, it does not quite make sense to take the affine quotient. However, it is affine when restricted to any affine subset of $S$, so we can make sense of the quotient $\mathcal{K}\otimes \ft^{\C}//W$ by gluing together the affine quotients on a covering of $S$.

The result is a fiber bundle over $S$ whose fiber is noncanonically isomorphic to $\ft^{\C}//W$, the isomorphism depending on a trivialization of the cotangent bundle at that point. We call a section of the fiber bundle $\mathcal{K}\otimes \ft^{\C}//W$ a $\ft^{\C}//W$-valued $1$-form.

\begin{defn}
    The Hitchin base for $G$-Higgs bundles over $S$ is the space of holomorphic $\ft^{\C}//W$-valued $1$-forms on $S$.
\end{defn}

This definition can be made more concrete by choosing homogeneous generators $p_1, \ldots, p_l$ of the graded ring of $W$-invariant polynomials on $\ft^{\C}$ with degrees $m_1, \ldots, m_l$. Applying $(p_1,\dots, p_l)$ to a $\ft^{\C}$-valued $1$-form on $S$ gives a point in $\oplus_{i = 1}^l H^0(S, \mathcal{K}^{m_i})$. This identifies the Hitchin base with $\oplus_{i = 1}^l H^0(S, \mathcal{K}^{m_i})$. We remark that the natural $\C^*$ action on the Hitchin base coming from the $\C^*$ action on $\mathcal{K}$ induces the action $\lambda \cdot p_i = \lambda^{m_i}p_i$ in this representation.

We now define the Hitchin map. The inclusion $\ft^{\C} \to \p^{\C}$ induces a natural map $\ft^{\C}//W \to \p^{\C}//K^{\C}$, which is an isomorphism by the Chevalley theorem. We call the inverse map the Chevalley map. Let $(P,\phi)$ be a $G$-Higgs bundle, so that $\phi$ is a holomorphic section of $\mathrm{ad}\p^{\C}$. There is a natural map from $\mathrm{ad}\p^{\C}$ to $S \times \ft^{\C}//W$, as follows.
Since $\mathrm{ad}\p^{\C} = P \times \p^{\C} // K^{\C}$ with $K^{\C}$ acting by $(\cdot k^{-1}, k \cdot)$, if we quotient again by $K^{\C}$ acting only on the $P$ factor, the result is the same as $P/K^{\C} \times \p^{\C}//K^{\C}$. The first factor is just $S$, and by the Chevalley theorem, the second factor is $\ft^{\C}//W$.

Since the $K^{\C}$ actions all commute with scaling, the Chevalley map induces a further map $\cdot//K^{\C}: \mathcal{K} \otimes \mathrm{Ad}\mathfrak{p}^{\C} \to \mathcal{K} \otimes \ft^{\C}//W$. 

\begin{defn}
    The Hitchin map is the map from the set of $G$-Higgs bundles on $S$ to the Hitchin base sending a $G$-Higgs bundle $(P,\phi)$ to the point $\phi//K^{\C}$ in the Hitchin base.
\end{defn}

\subsection{Cameral covers}\label{sec: cameral covers}

There is an equivalent description of the Hitchin base in terms of cameral covers, which we now describe. The scheme-theoretic cameral cover associated to a $\ft^{\C}//W$-valued $1$-form $\eta$ is the pullback of the quotient map $\mathcal{K} \otimes \ft^{\C} \to \mathcal{K} \otimes \ft^{\C} // W$ by $\eta$.
    \begin{center}
    \begin{tikzcd}
        \mathcal{K} \otimes \ft^{\C} \arrow[r]
        & \mathcal{K} \otimes \ft^{\C} // W \\
        C \arrow[u, "\eta_C"] \arrow[r, "\pi"]
        \arrow[ur, phantom, "\scalebox{1.5}{$\urcorner$}" , very near start, color=black]
        & S \arrow[u,"\eta"]
    \end{tikzcd}
    \end{center}
Instead of this universal object, we work with what we call a small cameral cover in \cite[section 3.2]{SS}.
\begin{defn}
    A small cameral cover is a connected Riemann surface $C$ together with maps $\pi:C\to S$ and $\eta_C: C\to \mathcal{K}\otimes \ft^{\C}$ making the diagram above commute.
\end{defn}
While any two small cameral covers are isomorphic, they are not universal because the isomorphism is not unique.

A small cameral cover does not come with an action of $W$, but it is nonetheless a Galois branched covering of $S$ whose Galois group $W_C$ can be identified with a subquotient of $W$. When the Higgs field is generically regular semisimple, any connected component of the normalization of the scheme theoretic cameral cover is a small cameral cover, and $W_C$ is a subgroup of $W$.

We can also view $\eta_C$ as a holomorphic section of $\mathcal{K}_C \otimes \ft^{\C}$ using the natural map $\pi^*\mathcal{K}_S \to \mathcal{K}_C$, which is called a $\ft^{\C}$-valued $1$-form.

From the harmonic maps standpoint, $\ft^{\C}$-valued $1$-forms give a useful perspective on the Hitchin base. Given a $\ft^{\C}$-valued $1$-form $\eta_C$ on $C$, $\xi=\eta_C+\overline{\eta}_C$ defines a $\ft$-valued $1$-form (a section of $T^*C\otimes \ft$), whose components in any identification of $\ft$ with $\R^{\textrm{rank}(G)}$ are harmonic. Viewing $\ft$ as an abelian group under addition $+$, $\xi$ represents a cohomology class in $H^1(C,\ft)$. Choosing a basepoint $x$ on $C$, the cohomology class of $\xi$ determines a representation $\chi: \pi_1(C,x)\to (\ft,+).$ We can then find covering spaces $\tilde{C}$ of $C$ on which the lift integrates to give an equivariant harmonic map to $\ft$. Our main results on harmonic maps are saying that such equivariant harmonic maps determine the limiting intrinsic behaviour of high energy harmonic maps to symmetric space.

\subsection{Harmonic maps to buildings}\label{sec: harmonic to building}
A harmonic map $h$ to an affine building does not carry the full information of a Higgs bundle, but it does give rise to a $\ft^{\C}//W$-valued $1$-form $\eta$. When that $\ft^{\C}//W$-valued $1$-form is called $\eta$, the authors in \cite{KNPS} call the map $h$ a harmonic $\eta$-map.  In their construction in \cite[section 3]{KNPS}, the authors restrict to buildings modeled on a maximal split toral subalgebra of $\mathfrak{sl}(n,\C).$ We present here a slightly different but equivalent construction, and we work in the general setting of semisimple Lie algebras.

Let $\mathcal{B}$ be an affine building modeled on the maximal split toral subalgebra $\ft$ of $\p\subset \g$ and let $h:\tilde{S}\to (\mathcal{B},d)$ be an equivariant Lipschitz map. A point $z\in S$ is regular if for every lift $x$ of $z$ in $\tilde{S}$ there exists a neighbourhood $U$ of $x$ such that $h(U)$ is contained in a single apartment. We denote the set of regular points by $S_{reg}.$ The complement of $S_{reg}$ is called the singular set. 

Let $z\in S_{reg}$, and let $x$ be a lift of $z$ to $\tilde{S}$ with neighbourhood $U$ such that $h(U)$ is contained in a single apartment $A\subset \mathcal{B}$. Any affine chart $f: f^{-1}(V)\to V\subset A$ determines linear coordinate functions $x_1,\dots, x_l$ on $f^{-1}(V)\subset \ft$ whose zero loci are reflection hyperplanes that generate the action of $W$. Choosing the chart to contain $h(U)$, differentiating $h$ in such linear coordinates determines an $\ft$-valued $1$-form in $U$.

If $h$ is harmonic, $h|_U$ is represented by a harmonic function to a Euclidean space, and hence the $(1,0)$-component of this $\ft$-valued $1$-form is a holomorphic $\ft^{\C}$-valued $1$-form. If we use a different affine chart around $h(U)$, the $\ft^{\C}$-valued $1$-form transforms by the action of $W$. Hence, we can associate a $\ft^{\C}//W$-valued $1$-form $\eta$ on $U$, which depends only on $h$ and no choices. To extend $\eta$ to all of $\tilde{S}$, take note of the following result about the singular set, which is due to Gromov-Schoen for locally finite buildings \cite[Theorem 6.4]{GSb}, and Breiner-Dees-Mese in general \cite[Theorem 1.1]{BDM}. 
\begin{prop}\label{prop: singular set}
     The singular set of a harmonic map from a Lipschitz Riemannian domain into a Euclidean building has Hausdorff codimension at least $2$.
\end{prop}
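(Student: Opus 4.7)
The plan is to follow the Gromov--Schoen strategy of defining an order function at each interior point and using monotonicity combined with a dimension reduction argument. For a harmonic map $h: \Omega \to \mathcal{B}$ from a Lipschitz Riemannian domain into an NPC space, and a point $x_0$ in the interior, one defines
\[
E(r) = \int_{B_r(x_0)} |\nabla h|^2 \, dV, \quad I(r) = \int_{\partial B_r(x_0)} d^2(h(y), h(x_0)) \, dS(y),
\]
and sets the order $\mathrm{ord}(x_0) = \lim_{r \to 0} r E(r)/I(r)$. The Korevaar--Schoen/Gromov--Schoen monotonicity formula, which is valid for any NPC target, guarantees this limit exists and satisfies $\mathrm{ord}(x_0) \geq 1$. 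One characterizes the regular points as exactly those at which $\mathrm{ord}(x_0) = 1$ \emph{and} the (necessarily affine) tangent map takes values in a single apartment.

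Next I would perform the blow-up analysis. Fix a singular point $x_0$ with order $\alpha = \mathrm{ord}(x_0)$, and consider the rescalings $h_r(y) = h(x_0 + r y)$, with target rescaled by a factor $\mu(r) \sim r^{-\alpha}$ chosen so that the maps have uniformly bounded energy on unit balls. Using the fact that tangent cones of a Euclidean building modeled on $\ft$ are again Euclidean buildings modeled on $\ft$ (a structural feature of affine buildings) together with the compactness theory for harmonic maps into NPC spaces, one extracts a limiting tangent map $h_\infty$ that is homogeneous of degree $\alpha$ and takes values in a Euclidean building of the same type as $\mathcal{B}$.

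I would then stratify the singular set by order and by the dimension $k(x_0)$ of the translation invariance of the tangent map at $x_0$. Each stratum $\Sigma_k = \{ x : k(x) \leq k, \; \mathrm{ord}(x) \geq 1 + \epsilon \}$ is closed by upper semicontinuity of $\mathrm{ord}$, and one estimates its Hausdorff dimension via a standard Federer-type argument: if $\Sigma_k$ had Hausdorff dimension exceeding $k$, then at a point of near-maximal density one could produce a tangent map with translation invariance of dimension greater than $k$, contradicting the definition of the stratum.

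Finally I would conclude by ruling out singular strata of codimension $0$ and $1$. A codimension-$0$ tangent map would be homogeneous and non-affine on all of $\mathbb{R}^n$, which by the maximum principle and the classification of homogeneous harmonic maps into NPC spaces forces the map to be constant, contradicting $\alpha \geq 1$. By the dimension reduction in the previous paragraph, a codimension-$1$ stratum would produce a tangent map equivariant under an $(n-1)$-parameter translation group, hence descending to a harmonic map from a one-dimensional domain into a Euclidean building; but any such map is a geodesic, and geodesics in an affine building lie in a single apartment away from isolated points, so there is no interior singularity. The main obstacle I expect is the tangent-cone step: verifying that rescaled limits exist in a setting where $\mathcal{B}$ need not be locally compact, and that the tangent cones genuinely inherit the building structure. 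This is precisely the technical improvement of \cite{BDM} over the locally compact Gromov--Schoen setting.
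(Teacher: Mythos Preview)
The paper does not actually prove this proposition: it is quoted as a known result, attributed to Gromov--Schoen \cite{GSb} for locally finite buildings and to Breiner--Dees--Mese \cite{BDM} in general. So there is no ``paper's proof'' to compare against; the authors simply invoke the literature.

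Your outline is a reasonable high-level summary of the Gromov--Schoen strategy, and you correctly identify that the non-locally-compact case requires the additional compactness machinery of \cite{BDM}. A couple of points where your sketch is imprecise: first, the characterization of regular points is not simply ``order $1$ and tangent map lands in an apartment''; rather, one shows that at any point of order $1$ the tangent map is affine into a flat, and then a separate \emph{gap/regularity} theorem (an $\epsilon$-regularity statement) is needed to conclude that a whole neighbourhood maps into a single apartment. Second, your codimension-$1$ reduction is slightly off: the issue is not whether geodesics in a building change apartments (they can, at walls), but rather that a homogeneous degree-$\alpha$ harmonic map from $\mathbb{R}$ into a conical NPC target with $\alpha > 1$ cannot exist, forcing $\alpha = 1$ and hence regularity. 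These are genuine steps in the argument rather than afterthoughts, but since the paper treats the whole proposition as a black box, your level of detail already exceeds what the paper provides.
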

Note we've only defined harmonic maps to metric spaces from Riemann surfaces, but Definition \ref{def: harmonic} can be made just the same for any Riemannian manifold source. 

If we choose a metric on $S$, then taking the pointwise norm of $\eta$ recovers the measurable energy density of $f$ with respect to this metric, which is locally uniformly bounded because $f$ is locally Lipschitz. It then follows from classical results in complex analysis that we can extend $\eta$ to all of $\tilde{S}$ (see, for example, \cite{Sha}). By equivariance, $\eta$ descends to a holomorphic $\ft^{\C}//W$-valued $1$-form on $S$.
\begin{remark}
    It is conjectured in \cite{KNPS} that for each $\ft^{\C}//W$-valued $1$-form, there is a building or building-like object with a harmonic map with this $\ft^{\C}//W$-valued $1$-form that is in some sense universal \cite[Conjecture 1.17]{KNPS} (see also \cite{KNPS2}).
\end{remark}

\section{High energy harmonic maps and convergence to buildings}
In this section we prove Theorem E and Corollary E. We first prove the comparison between pullback metrics, then the comparison between pullback connections, and then we discuss harmonic maps to buildings. 

\subsection{Pullback metrics at high energy}
Recalling the setup for Theorem E, $S$ is a Riemann surface, $G$ is a semisimple real Lie group of non-compact type with maximal compact subgroup $K$ and Lie algebra decomposition $\g=\kk\oplus \p$, and $(P,R\phi,*_{h_R})$, $R>0$, is a ray of harmonic $G$-bundles over $S$.  We fix as well a maximal split toral subalgebra $\ft\subset \p$. The symmetric space is $(G/K,\nu)$, and as explained in the introduction, $\nu$ induces a Euclidean metric $m$ on $\ft$. As well, we use $|\cdot|_{h_R}$ for the norm induced by $h_R$, and $|\cdot|$ for the norm induced by $m$. Let $f_R$ be the harmonic map to $(G/K,\nu)$, $\eta$ the $\ft^{\C}$-valued $1$-form on a small cameral cover $C$ obtained by using the Hitchin map on $(P,\phi)$, and $f_\eta:\tilde{C}\to (\ft,m)$ the harmonic map from the universal covering of $C$. We take the adjoint representation of $G$ in order to apply the estimates that we've found for Higgs bundles. We will abuse notation and write $\phi$ for $\textrm{ad}(\phi)$, $\phi_s$ for $\textrm{ad}(\phi_s)$, etc. Let $B$ be the critical set for this ordinary Higgs bundle. 

Now that we're working globally, and not just on the disk, to measure certain norms we fix a metric $\sigma$ on $S$. When taking the norm of a form with values in a bundle we'll add a sub-scripted $\sigma$ to our norm; for instance, we write $|\cdot|_{h_R,\sigma}$ and $|\cdot|_{\sigma}$. With no $\sigma$, we apply $|\cdot|_{h_R}$ and $|\cdot|$ to forms in the usual way. For example, under our convention here, $|\phi|_{h_R}^2$ is a metric and not a function.  

The first part of Theorem E is the statement that on any open subset $U$ of $S-B,$ $f_R^*\nu(z)$ is uniformly comparable to $R^2f_\eta^*m$. Toward this, let us first write the pullback metrics in terms of the Higgs bundle data. Using the Maurer-Cartan form,
\begin{align*}
    f_R^*\nu&=\nu(\partial f_R, \overline{\partial} f_R)+\nu( \partial f_R, \partial f_R) + \nu(\overline{\partial f_R}, \overline{ \partial f_R}) \\
&=R^2|\phi|_{h_R}^2+R^2\nu(\phi,\phi)+R^2\nu(\overline{\phi},\overline{\phi}),
\end{align*}
and 
\begin{align*}
    f_\eta^*\nu&=m(\partial f_\eta, \overline{\partial} f_\eta)+m( \partial f_\eta,  \partial f_\eta) + m(\overline{\partial f}_\eta, \overline{ \partial f}_\eta) \\
&=|\eta|^2+m(\eta,\eta)+m(\overline{\eta},\overline{\eta}).
\end{align*}
Note that, since both maps are equivariant with respect to actions by isometries, and the Deck group of $C$ over $S$, i.e., $W_C$, is acting by isometries on $(\ft,m)$, both pullback metrics descend all the way to $S$. Henceforth we view them both as symmetric tensors on $S$.

Over every point, $\phi$ is in the $\textrm{Ad}|_{K^{\C}}$ orbit of the $\ft^{\C}//W$ valued $1$-form on $S$. Hence $\nu(\phi,\phi)=m(\eta,\eta),$ and the first part of Theorem E will follow immediately from the proposition below. 
\begin{prop}\label{normcomparison}
    On an open subset $U\subset S-B$, there exist constants $C_1=C_1(G,U,\sigma, \eta), C_2=C_2(G,U,\sigma), c=(G,U,\sigma, \eta)>0$ such that
    $$R^2|\eta|_{\sigma}^2\leq R^2|\phi|_{h_R,\sigma}^2 \leq R^2|\eta|_{\sigma}^2 + C_1e^{-cR}+C_2.$$ 
\end{prop}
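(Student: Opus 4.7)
The plan is to combine the Schur decomposition $R\phi = (R\phi)_a + (R\phi)_u$ with the two main local estimates, Theorems B and \ref{GRSdecoupling}. In an orthonormal Schur basis, the Hilbert-Schmidt norm of the diagonal part $(R\phi)_a$ equals the square root of the sum of squared eigenvalues, which is independent of the metric and by construction coincides with $R|\eta|$. Together with the orthogonality of diagonal and strictly upper triangular parts this yields the pointwise identity $R^2|\phi|^2_{h_R,\sigma} = R^2|\eta|^2_\sigma + |(R\phi)_u|^2_{h_R,\sigma}$ on $U$. The left-hand inequality of the proposition is immediate, and the task reduces to bounding the error term $|(R\phi)_u|^2_{h_R,\sigma}$ by $C_1e^{-cR}+C_2$.

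Since $\overline{U}\subset S-B$ is compact, $\mathrm{Gap}(\phi)$ is bounded below by some $d_0>0$ and the eigenvalues of $\phi$ are bounded above by a multiple of $d_0$ on a neighborhood of $\overline{U}$. I cover this neighborhood by finitely many holomorphic coordinate disks in each of which $\phi\in S_n(d_0,A_0)$ for fixed constants $d_0,A_0$, so by the scaling remark $R\phi\in S_n(Rd_0,A_0)$. On $U$ the metric $\sigma$ is uniformly comparable to the flat metrics of these charts, so it suffices to work with local flat norms.

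In each chart I further decompose $(R\phi)_u = (R\phi_s)_u + R\phi_n$ using the Jordan-Chevalley decomposition. Lemma \ref{lem: star and dagger} applied to $R\phi_s=\sum_i R\phi_i\pi_i$, with $\epsilon = Ce^{-cRd_0}$ coming from the almost orthogonality of the projections $\pi_i$ provided by Theorem \ref{GRSdecoupling}, gives $|(R\phi_s)_u|_{h_R}\leq \sqrt{n}\,Ce^{-cRd_0}\,|(R\phi_s)_a|_{h_R}$. Since $|(R\phi_s)_a|_{h_R}$ is bounded by a constant multiple of $Rd_0$ from the $S_n(Rd_0,A_0)$ condition, the right-hand side is at most $C_1 e^{-cR}$ after absorbing the polynomial factor of $R$ into the exponential. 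Meanwhile Theorem B applied to $R\phi$ yields $|R\phi_n|_{h_R}\leq C_2$ independently of $R$. Combining via the triangle inequality and squaring produces the desired bound.

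The main obstacle is essentially notational: the analytic content is all contained in the previously established Theorems B and \ref{GRSdecoupling}, and the only care needed is in setting up the finite coordinate cover and tracking how the mixed norm $|\cdot|_{h_R,\sigma}$ interacts with it.
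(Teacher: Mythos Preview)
Your proof is correct and follows essentially the same route as the paper: split $|\phi|_{h_R}^2 = |\phi_a|_{h_R}^2 + |\phi_u|_{h_R}^2 = |\eta|^2 + |(\phi_s)_u + \phi_n|_{h_R}^2$, then bound $|(\phi_s)_u|_{h_R}$ via Theorem~\ref{GRSdecoupling} together with Lemma~\ref{lem: star and dagger} and bound $|\phi_n|_{h_R}$ via Theorem~B. Your treatment is in fact slightly more careful than the paper's, since you use the triangle inequality before squaring and make the finite coordinate cover explicit.
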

\begin{remark}
    By our formula for the pullback metrics, Proposition \ref{normcomparison} is equivalent to the statement that on $U$, $$R^2 e(f_\eta)\leq e(f_R)\leq R^2e(f_\eta) + C_1e^{-cR}+C_2,$$ with $C_j,c$ as above.
\end{remark}
The cleanest way to prove Proposition \ref{normcomparison} is to use the Schur decomposition. Recall that while $\phi=\phi_a+\phi_u$ only makes sense locally, $|\phi_a|_{h_R}^2$ and $|\phi_u|_{h_R}^2$ are globally well-defined. Note as well that $|\phi_a|_{h_R}=|\eta|$. 
\begin{proof}[Proof of Proposition \ref{normcomparison}]
We split up 
\begin{equation}\label{expanded norm}
|\phi|_{h_R}^2 =|\phi_{a}|_{h_R,\sigma}^2+|\phi_u|_{h_R,\sigma}^2 = |\eta|_{\sigma}^2+|(\phi_s)_u + \phi_n|_{h_R,\sigma}^2\leq |\eta|_\sigma^2+|(\phi_s)_u|_{h_R,\sigma}^2 + |\phi_n|_{h_R,\sigma}^2.
\end{equation}
By Lemma \ref{lem:almostorthogonal1} combined with Theorem \ref{GRSdecoupling}, for $z\in S-B$ we can find $C_1=C_1(G,U,\sigma,\eta),c=c(G,U,\sigma,\eta)>0$ such that $|(\phi_s)_u|_{h_R,\sigma}^2\leq C_1e^{-cR}$ (we did the same bound in the proof of Proposition \ref{asymdecoupling}). By Theorem B, for $z\in S-B$, there exists $C_2=C_2(G,U,\sigma)$ such that $|\phi_n|_{h_R,\sigma}\leq C_2.$ In all of these bounds, the only dependence on $G$ comes from the dimension of the underlying holomorphic vector bundle, which is $\dim \g^{\C}$. Inserting the bounds on $|(\phi_s)_u|_{h_R,\sigma}^2$ and $|\phi_n|_{h_R,\sigma}$ into (\ref{expanded norm}), we obtain Proposition \ref{normcomparison}.
\end{proof}

\subsection{Pullback connections at high energy}
Here we explain how to obtain the second part of Theorem E using Higgs bundles and Proposition \ref{dels}. We continue with the objects and notations from the previous subsection. 

The second part of Theorem E concerns a certain comparison between the two pullback connections $\nabla^{f_R^*}$ and $\nabla^{f_\eta^*},$ which live a priori on $f_R^*TG/K$ and $f_\eta^*T\mathfrak{t}$ respectively. To define this comparison, first recall that if $g$ is an endomorphism of a Hermitian vector space $(V,h),$ we have the real centre of the centralizer $\R ZC(g_s)$. Although $\R ZC(g_s)$ is naturally a sub-algebra of $\textrm{End}(V)$, it is not necessarily real with respect to the real structure $*_h$ on $\textrm{End}(V)$. To compare $*_h$ on $\textrm{End}(V)$  and $\dagger$ on $ZC(g),$ we define a map $\textrm{Re}_h: \R ZC(g_s)\to \textrm{End}(V)$, going into the $*_h$-self adjoint endomorphisms and given by $s\mapsto \frac{1}{2}(s+s^{*_h}).$ In the context of our family of harmonic bundles $(E,\overline{\partial}_E,R\phi,h_R)$ on $S$ obtained through the harmonic $G$-bundles $(P,R\phi,*_{h_R})$ and the adjoint representation (please excuse the double use of notation), for every $R$, patching the corresponding maps $\textrm{Re}_{h_R}$ on each fiber determines a map $\textrm{Re}_{h_R}: \R F_\phi\to \textrm{End}_{h_R}(E)$. 
Using the Maurer-Cartan form as in section \ref{hmapsdefinitions}, $f_R^*TG/K$ identifies with the adjoint bundle of the reduction of $P$ to $K$ solving the self-duality equations and $f_\eta^*T\mathfrak{t}$ identifies with the trivial $\mathfrak{t}$-bundle over $C$. 
The bundle $f_R^*TG/K$ descends to a bundle on $S$ that under the adjoint representation identifies with $\textrm{End}_{h_R}(E).$ As well,
using the Deck group action, the trivial $\mathfrak{t}$-bundle $f_\eta^*T\mathfrak{t}$ further descends to a bundle $\R F_\eta$ over $S-B$ that identifies with the real toral bundle $\R F_\phi$ of the adjoint representation of $(P,\phi).$  Thus, under our identifications, $\textrm{Hom}(\R F_\eta,f_R^*TG/K)$ becomes $\textrm{Hom}(\R F_\phi,\textrm{End}_{h_R}(E))$. The map $\textrm{Re}_R$ relating the two connections (and discussed in the introduction) is what we get by transporting $\textrm{Re}_{h_R}$ to $\textrm{Hom}(\R F_\eta,f_R^*TG/K)$.  Both $\textrm{Re}_R \circ \nabla^{f_\eta^*}$ and $\nabla^{f_R^*}\circ \textrm{Re}_R$ are $1$-forms valued in $\textrm{Hom}(\R F_\eta,f_R^*TG/K),$ and the bundle $\textrm{Hom}(\R F_\eta,f_R^*TG/K)$ has a norm $|\cdot|_{m^*\otimes f_R^*\nu}$ coming from tensoring the dual of $m$ and $f_R^*\nu.$

\begin{prop}\label{prop:thmE2ndpart}
    Let $U\subset S-B$ be an open subset. There exist constants $C=C(G, U, \sigma), c=c(G,U,\sigma)>0,$ such that

$$    |\textrm{Re}_R \circ \nabla^{f_\eta^*}-\nabla^{f_R^*}  \circ \textrm{Re}_R|_{m^*\otimes f_R^*\nu,\sigma}\leq Ce^{-cR}.$$
\end{prop}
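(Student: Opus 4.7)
The plan is to unfold the definitions so that the difference $\mathrm{Re}_R \circ d^{f_\eta^*} - \nabla^{f_R^*}\circ \mathrm{Re}_R$ reduces to an expression in the Chern derivatives of the projections $\pi_i$, which can then be controlled by Proposition \ref{dels} applied to the rescaled Higgs field $R\phi \in S_n(Rd,A)$.

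First I would pass to a simply connected coordinate patch in $U$ on which the generalized eigenspaces of $\phi$ can be ordered, so that the projections $\pi_1, \ldots, \pi_m$ trivialize $\R F_\phi$. A section $s$ of $\R F_\eta \cong \R F_\phi$ is then of the form $s = \sum_i s_i \pi_i$ with $s_i$ real, and under the identification the flat connection $d^{f_\eta^*}$ acts by $d^{f_\eta^*} s = \sum_i (ds_i)\pi_i$ since the $\pi_i$ are flat sections of the toral bundle (section \ref{prelimhiggs}). By definition $\mathrm{Re}_R(s) = \tfrac{1}{2}(s + s^{*_R})$, and because $\nabla^{f_R^*}$ is a Chern connection it commutes with the adjoint involution: $\nabla^{f_R^*}(s^{*_R}) = (\nabla^{f_R^*} s)^{*_R}$. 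Combined with Leibniz, the definition of the tensor product connection on $\mathrm{Hom}(\R F_\eta, f_R^*TG/K)$ yields
\begin{align*}
(\nabla_R \mathrm{Re}_R)(s) &= \nabla^{f_R^*}\bigl(\mathrm{Re}_R(s)\bigr) - \mathrm{Re}_R\bigl(d^{f_\eta^*} s\bigr) \\
&= \tfrac{1}{2}\bigl(\nabla^{f_R^*} s - d^{f_\eta^*} s\bigr) + \tfrac{1}{2}\bigl(\nabla^{f_R^*} s - d^{f_\eta^*} s\bigr)^{*_R} \\
&= \tfrac{1}{2}\sum_i s_i \bigl(\nabla^{f_R^*} \pi_i + (\nabla^{f_R^*}\pi_i)^{*_R}\bigr).
\end{align*}

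Next I would apply Proposition \ref{dels} to $R\phi$, which on small disks in $U$ lies in $S_n(Rd,A)$ for appropriate $d$ and $A$ (depending on $U$ and $\eta$): this gives $|\partial_{h_R}\pi_i|_{h_R,\sigma} \leq Ce^{-cR}$, and combined with $\dbar \pi_i = 0$ we obtain $|\nabla^{f_R^*}\pi_i|_{h_R,\sigma} \leq Ce^{-cR}$. Since $U$ is relatively compact in $S-B$ and $m$ is fixed, the coefficients $|s_i|$ are bounded by a uniform constant times $|s|_m$. Using that the adjoint involution is an isometry on $(\mathrm{End}(E), |\cdot|_{h_R})$ and that the identification $\mathrm{End}_{h_R}(E) \cong f_R^*TG/K$ coming from the adjoint representation is an isometry up to a constant depending only on $G$, we conclude the required bound on $|\nabla_R \mathrm{Re}_R|_{m^*\otimes f_R^*\nu, \sigma}$.

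The main task is bookkeeping rather than a true obstacle: verifying that the identification $\R F_\eta \cong \R F_\phi$ intertwines the flat connections $d^{f_\eta^*}$ and $d\bigl(\sum s_i \pi_i\bigr) = \sum (ds_i)\pi_i$, and that the local computation on a simply connected patch descends correctly from the cameral cover to $S - B$. The latter is automatic because every quantity in the computation is canonically associated to $\phi$, hence $W_C$-equivariant.
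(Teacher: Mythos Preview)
Your proof is correct and follows the same approach as the paper: identify $\nabla^{f_R^*}$ with the Chern connection $\nabla_{h_R}$ and $d^{f_\eta^*}$ with the flat connection $d$ on $\R F_\phi$, reduce the difference to $\nabla_{h_R}\pi_i$, and invoke Proposition \ref{dels}. The paper's version is terser, simply noting that the $\pi_i$ form an (orthogonal) basis for $\R F_\phi$ so that Proposition \ref{dels} directly gives the estimate, whereas you make the intermediate formula $(\nabla_R \mathrm{Re}_R)(s) = \tfrac12\sum_i s_i\bigl(\nabla_{h_R}\pi_i + (\nabla_{h_R}\pi_i)^{*_R}\bigr)$ explicit; this is a helpful unpacking but not a different argument.
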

\begin{proof}
To prevent the notation from becoming confusing, in this proof we differentiate our notation and use $\nu$ for the metric on the symmetric space, and $\nu_0$ for the associated bilinear form on $\g$ and the bilinear form on $\textrm{End}(E)$ induced from $\nu_0$ and the adjoint representation.

We work in $\textrm{Hom}(\R F_\phi,\textrm{End}_{h_R}(E))$, which inherits a norm $|\cdot|_{\nu_0^*\otimes h_R}$ from $\nu_0$ and $h_R$. Using the identifications above, $\nabla^{f_R^*}$ becomes $\nabla_{h_R}$ and $\nabla^{f_\eta^*}$ becomes the flat connection $d$ on $\R F_\phi$ as defined in section 3. Since $h_R$ restricts to the $\nu_0$ on $\textrm{End}_{h_R}(E)$, $f_R^*\nu$ becomes $h_R$. As well, we highlight that $m$ is defined to be a restriction of $\nu_0$.
We deduce that on $S-B$, $$ |\textrm{Re}_R \circ \nabla^{f_\eta^*}-\nabla^{f_R^*}  \circ \textrm{Re}_R|_{m^*\otimes f_R^*\nu,\sigma}=|\textrm{Re}_{h_R}\circ d-\nabla_{h_R}\circ \textrm{Re}_{h_R}|_{\nu_0^*\otimes h_R,\sigma}.$$
    Working with the right hand side of the equation above, since the $\pi_i$'s form an orthonormal basis for $\R F_\phi,$ Proposition \ref{dels} directly implies the claimed estimate.
\end{proof}

\begin{proof}[Proof of Theorem E]
For the first claim,
$f_R^*\nu-R^2f_\eta^*m= |R\phi|_{h_R}^2 - R^2|\eta|^2.$ Applying Proposition \ref{normcomparison} yields the stated comparison of $f_R^*\nu$ and $f_\eta^*m$. For the comparison between pullback connections, we apply Proposition \ref{prop:thmE2ndpart}. 
\end{proof}

\subsection{$\omega$-convergence of harmonic maps}
Toward Corollary E, we introduce asymptotic cones. For more exposition on the subject, see \cite{KL} and the references therein. Let $P(\mathbb{N})$ be the power set of $\mathbb{N}$. 
\begin{defn}
    A non-principal ultrafilter $\omega$ is a finitely additive probability measure on $P(\mathbb{N})$ such that $\omega(A)\in \{0,1\}$ for every $A\in P(\N)$ and $\omega(A)=0$ for every finite subset $A.$
\end{defn}
A choice of non-principal ultrafilter provides a way to assign limits to bounded sequences in $\R$, without having to pass to subsequences. Formally, a sequence $(x_n)_{n\in \N}$ in $\R$ has $\omega$-limit $x$, denoted $\lim_\omega x_n$, if for all neighbourhoods $U$ of $x,$ $\omega(\{n\in \N: x_n\in U\})=1.$ Every bounded sequence has a unique $\omega$-limit.

Let $(X_n,d_n,p_n)$, $n\in \mathbb{N}$, be a sequence of pointed metric spaces. A sequence of points $x_n\in X_n$, $n\in\mathbb{N}$, is called bounded if the sequence of non-negative numbers $d_n(x_n,p_n)$, $n\in \mathbb{N}$, is bounded.
\begin{defn}
    The asymptotic cone of $(X_n,d_n,p_n)$ with respect to a non-principal ultrafilter $\omega$ is the metric space $(X_\omega, d_\omega),$ where
    \begin{enumerate}
        \item points in $X_\omega$ are equivalence classes of sequences $(x_n)_{n=1}^\infty$ with $d_n(p_n,x_n)$ bounded, where two sequences $(x_n)_{n=1}^\infty$ and $(y_n)_{n=1}^\infty$ are equivalent if $\lim_\omega d_n(x_n,y_n)=0.$
        \item the distance $d_\omega$ between points $[x_n]$ and $[y_n]$ is $$d_\omega([x_n],[y_n])=\lim_\omega d_n(x_n,y_n).$$
    \end{enumerate}
\end{defn}
We often refer to $(X_\omega,d_\omega)$ as the limit of the $(X_n,d_n,p_n)'s$ with respect to $\omega$. It is well known that if the $(X_n,d_n)$'s are complete and NPC length spaces, then the same holds for $(X_\omega,d_\omega)$. Related to Corollary E, the main result we need about asymptotic cones is due to Kleiner and Leeb.
\begin{thm}[Theorem 5.1.1 in \cite{KL}]\label{thm: KL}
Let $(X,d)$ be the metric space associated to a symmetric space of non-compact type. Choosing a non-principal ultrafilter $\omega$, any sequence of basepoints $(p_n)_{n\in\mathbb{N}}\subset X$, and any sequence of scaling factors $(a_n)_{n\in \mathbb{N}}\subset (0,\infty)$, the asymptotic cone of $(X,a_n^{-1}d,p_n)$ with respect to $\omega$ is a Euclidean building modeled on a maximal split toral subalgebra of $\g$.
\end{thm}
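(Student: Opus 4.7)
The plan is to follow the approach of Kleiner-Leeb, whose proof identifies the apartments of the putative building as ultralimits of maximal flats in $X$, and verifies the building axioms directly from the Riemannian geometry of $X$.

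First I would observe that $(X,d)$ is a CAT(0) space, and each rescaled pointed space $(X,a_n^{-1}d,p_n)$ is still CAT(0). Since asymptotic cones of CAT(0) spaces are complete CAT(0) spaces, $(X_\omega,d_\omega)$ is at least a complete NPC length space. Next I would construct apartments: recall that in a symmetric space of non-compact type, every maximal flat is isometric to $(\ft,m)$, and any two points lie in a common maximal flat. Given $x_\omega,y_\omega\in X_\omega$ with representatives $(x_n),(y_n)$, choose for each $n$ a maximal flat $F_n\subset X$ containing both $x_n$ and $y_n$, and define the apartment $A=\lim_\omega(F_n,a_n^{-1}d|_{F_n},p_n)$. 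Each $F_n$ is isometric to $(\ft,m)$, so $A$ is a flat subspace of $X_\omega$ isometric to $(\ft,m)$, and by construction it contains $x_\omega$ and $y_\omega$. This simultaneously shows that the candidate family of apartments covers $X_\omega$ and that any two points lie in a common apartment.

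Second, I would equip each apartment with the $W$-invariant wall structure of $(\ft,m)$ coming from the restricted root system of $\g$, and verify the compatibility axiom: for any two apartments $A$, $A'$, the intersection $A\cap A'$ is a closed convex subset on which the two $\ft$-charts differ by an element of the affine Weyl group $W\ltimes\ft$. The key tool here is the Cartan (polar) decomposition $G=KAK$ on the symmetric space. Writing $F_n'=g_nF_n$ with $g_n=k_n\exp(H_n)k_n'$ and $a_n^{-1}H_n$ an element of a fixed Weyl chamber in $\ft$, ultralimits of the $K$-factors (using compactness of $K$) produce the identifications of $A$ and $A'$ with $(\ft,m)$, while the $\exp(H_n)$ piece gets absorbed into a translation of $\ft$. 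The Weyl group element arises from the ambiguity in choosing the $k_n$, reflecting the non-uniqueness of the Cartan decomposition on the walls. The restriction of the two charts to $A\cap A'$ thus differs by an element of $W\ltimes\ft$.

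The hard step, and the technical heart of Kleiner-Leeb's argument, is showing that $A\cap A'$ is actually a convex subset of each apartment (not merely a closed subset), together with the precise identification of the gluing transformation with an element of $W\ltimes\ft$. This requires a quantitative understanding of how pairs of maximal flats in $X$ fellow-travel: one uses the exponential decay of the angle between asymptotic geodesics in different Weyl chambers and the fact that, at large scale, two maximal flats in $X$ meet approximately in a half-flat bounded by singular hyperplanes. Once this convexity and the Weyl-group-transition property are established, the remaining axioms of a (non-discrete) Euclidean building modeled on $(\ft,W\ltimes\ft)$ follow formally, and the angular distance in $X_\omega$ matches the building angle, completing the identification.
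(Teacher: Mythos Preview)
The paper does not prove this theorem at all: it is stated with the citation ``Theorem 5.1.1 in \cite{KL}'' and used as a black box. There is therefore no proof in the paper to compare against; your task here was only to recognize that this is an external input and cite it.

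That said, since you have sketched an argument, a brief comment on it. Your outline captures the coarse shape of Kleiner--Leeb's strategy---apartments arise as ultralimits of maximal flats, and the CAT(0) property passes to the cone---but the mechanism you propose for the transition maps is not quite theirs. You suggest analyzing pairs of apartments via the $KAK$ decomposition and taking ultralimits of the $K$-factors; Kleiner--Leeb instead work through the Tits boundary, showing first that the boundary at infinity of the asymptotic cone carries the structure of a spherical building (inherited from the Tits building of $X$), and then deducing the Euclidean building axioms from this together with the CAT(0) geometry. The convexity of apartment intersections and the identification of transition maps with elements of $W\ltimes\ft$ come from this spherical-building-at-infinity picture rather than from a direct $KAK$ argument. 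Your sketch is plausible as an alternative heuristic, but the step ``ultralimits of the $K$-factors (using compactness of $K$) produce the identifications'' hides real difficulty: compactness of $K$ does not by itself give you a well-defined limiting isometry between the ultralimit flats, since the $k_n$ need not converge even along the ultrafilter in any way that interacts well with the rescaling.
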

\begin{remark}
    Assuming the continuum hypothesis, the asymptotic cone from Theorem \ref{thm: KL} is, up to isometries, independent of the choice of ultrafilter and the basepoint. Without the continuum hypothesis, there are uncountably many non-isometric asymptotic cones \cite{Tho}. 
\end{remark}
Given a sequence $(X_n,d_n,p_n)$ and ultrafilter $\omega$ that together give limit $(X_\omega,d_\omega),$ maps to $(X_\omega,d_\omega)$ can be constructed using sequences of maps to the $(X_n,d_n,p_n)$'s. Namely, given a metric space $(Z,d)$ and a family of locally uniformly Lipschitz maps $f_n: (Z,d)\to (X_n,d_n,p_n)$, $n\in \mathbb{N},$ so long as $$\sup_{n\in \mathbb{N}}d_n(f_n(x),p_n)<\infty$$ for some (and hence every) $x\in Z,$ the assignment $x\mapsto [f_n(x)]$ defines a locally Lipschitz map from $(Z,d)$ to $(X_\omega,d_\omega).$ Similarly, we can take limits of controlled group actions. Recall that if $\rho:\Gamma\to \textrm{Isom}(X,d)$ is an action of a group on a metric space, then the translation length of an element $\gamma\in \Gamma$ is $$\ell(\rho(\gamma))=\inf_{x\in X}d(\rho(\gamma)x,x).$$
Any sequence of actions $\rho_n$ of a group $\Gamma$ by isometries on $(X_n,d_n,p_n)$ with bounded translation lengths determines an isometric action $\rho_\omega$ on $(X_\omega,d_\omega)$ by $\rho_\omega ([x_n]) = [\rho_n(x_n)]$. 

Having discussed limits of metric spaces to asymptotic cones, we now explain how harmonic maps come along for the ride, the main point being Proposition \ref{prop: hmapsconvergence} below, which will be used in the next subsection in the context of symmetric spaces and buildings. 
\begin{prop}\label{prop: hmapsconvergence}
    Let $\omega$ be an ultrafilter and let $(X_n,d_n,p_n)$, $n\in\mathbb{N}$ be a sequence of pointed complete NPC length spaces with $\omega$-limit $(X_\omega,d_\omega)$. For each $n\in\mathbb{N},$ let $f_n:\tilde{S}\to (X_n,d_n)$ be a harmonic map. Assume that the $f_n's$ are locally uniformly Lipschitz and that for some $x\in \tilde{S},$ $\sup_{n\in\mathbb{N}}d_n(f_n(x),p_n)<\infty$. Then the map $f_\omega=[f_n]:\tilde{S}\to (X_\omega,d_\omega)$ is harmonic. 
\end{prop}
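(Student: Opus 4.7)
The plan is to verify Korevaar-Schoen's variational characterization of harmonicity for $f_\omega$ directly. Fix a relatively compact $U \subset \tilde{S}$ and a Lipschitz map $g: \overline{U} \to (X_\omega, d_\omega)$ with $g|_{\partial U} = f_\omega|_{\partial U}$; the goal is to show $\mathcal{E}(U, f_\omega) \leq \mathcal{E}(U, g)$. Setting $L := \sup_n \mathrm{Lip}(f_n) < \infty$, the map $f_\omega$ is immediately $L$-Lipschitz from the definition of $d_\omega$, so its Korevaar-Schoen energy is well-defined and finite.

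The first main step is the ultralimit lower semicontinuity
\[
\mathcal{E}(U, f_\omega) \leq \lim_\omega \mathcal{E}(U, f_n).
\]
I would prove this using the Korevaar-Schoen $\epsilon$-approximate energies $E^\epsilon(U, \cdot)$, which are integrals of $\epsilon^{-2} d(f(x), f(y))^2$ weighted against kernels supported where $d_{\tilde{S}}(x,y) < \epsilon$. Since $d_\omega(f_\omega(x), f_\omega(y))^2 = \lim_\omega d_n(f_n(x), f_n(y))^2$ pointwise, and the integrands are uniformly bounded by $L^2$, a dominated-convergence argument for ultrafilters yields $E^\epsilon(U, f_\omega) = \lim_\omega E^\epsilon(U, f_n)$ for each fixed $\epsilon$. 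Combined with the standard lower semicontinuity of KS energy as $\epsilon \to 0$, this produces the asserted inequality.

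The second main step is the construction of competitor maps $g_n: \overline{U} \to X_n$ for $g$ satisfying $g_n|_{\partial U} = f_n|_{\partial U}$ and $\lim_\omega \mathcal{E}(U, g_n) \leq \mathcal{E}(U, g)$. Enlarging $U$ to a slightly bigger $U' \supset\supset U$ and extending $g$ by $f_\omega$ on $U' \setminus U$, I would pick a countable dense subset of $U'$, choose representatives for the values of $g$ in each $X_n$, and extend each choice to an $L'$-Lipschitz map $g_n: U' \to X_n$ via the Lang-Schroeder Kirszbraun extension theorem for complete CAT(0) targets. To enforce $g_n|_{\partial U} = f_n|_{\partial U}$ I would interpolate along NPC geodesics between $g_n$ and $f_n$ on a thin collar inside $U' \setminus U$; because $g = f_\omega$ there, the $d_n$-distance between these maps tends to zero along $\omega$, so the interpolation contributes a negligible amount of energy.

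Combining these ingredients with the harmonicity of each $f_n$ gives the chain
\[
\mathcal{E}(U, f_\omega) \leq \lim_\omega \mathcal{E}(U, f_n) \leq \lim_\omega \mathcal{E}(U, g_n) \leq \mathcal{E}(U, g).
\]
The main obstacle is the competitor construction, especially the boundary adjustment: the NPC geometry of the targets is crucial both for extending Lipschitz maps from dense subsets and for interpolating without creating energy concentrations that survive the ultralimit. The ultralimit bookkeeping is a bit delicate because the targets themselves are varying, but the NPC structure passes to ultralimits, so the classical Korevaar-Schoen machinery applies uniformly in $n$.
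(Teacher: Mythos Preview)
Your overall strategy coincides with the paper's: show energy behaves well under $\omega$-limits for uniformly Lipschitz sequences, represent the competitor $g$ by a Lipschitz sequence $g_n$ matching $f_n$ on $\partial U$, and use harmonicity of $f_n$. The main difference is the competitor construction. The paper's auxiliary lemma builds $u_n$ directly on the set $\{z_1,\dots,z_{j(n)}\}\cup\partial U$ (setting $u_n|_{\partial U}=f_n|_{\partial U}$ and $u_n(z_k)$ a chosen representative of $u(z_k)$), verifies this partial map is Lipschitz via an auxiliary nearest-point-on-$\partial U$ trick, and only then extends by Lang--Schroeder; the exact boundary condition holds from the outset and no collar or geodesic interpolation is needed. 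Your enlargement to $U'$ with collar interpolation is more circuitous, and as written is slightly inconsistent: interpolating inside $U'\setminus U$ does not by itself force $g_n|_{\partial U}=f_n|_{\partial U}$ unless you are really comparing energies on $U'$ rather than $U$.

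There is a genuine gap at your final inequality $\lim_\omega \mathcal{E}(U,g_n)\le\mathcal{E}(U,g)$. Your $\epsilon$-approximate argument gives $E^\epsilon(U,g)=\lim_\omega E^\epsilon(U,g_n)$ for each fixed $\epsilon$, and letting $\epsilon\to 0$ yields only the lower-semicontinuity direction $\mathcal{E}(U,g)\le\lim_\omega\mathcal{E}(U,g_n)$. Uniform Lipschitz control alone does not rule out energy concentration along the sequence (consider $g_n(x)=n^{-1}\sin(nx)$ into a fixed $\R$: uniformly $1$-Lipschitz, $\omega$-limit zero, but energies bounded away from zero). The paper handles this point by asserting the \emph{equality} $\mathcal{E}_\omega(U,u)=\lim_\omega\mathcal{E}_n(U,u_n)$ for any uniformly Lipschitz sequence with $\omega$-limit $u$, citing Korevaar--Schoen's construction of the $L^1$ pullback metric for Lipschitz maps; you would need to invoke or prove that two-sided statement rather than only the semicontinuity direction you establish.
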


One can prove Proposition \ref{prop: hmapsconvergence} by adapting the mollification argument of Korevaar-Schoen from \cite[Theorem 3.9]{KS2}, replacing the Korevaar-Schoen limit space (see \cite[section 3]{KS2}) with the $\omega$-limit. Alternatively, as in \cite[section 3]{Kim}, one could deduce the proposition from the theorem of Korevaar-Schoen by embedding the Korevaar-Schoen limit space into the $\omega$-limit. We give a new proof using Lipschitz extension instead of mollification.

Proposition \ref{prop: hmapsconvergence} is straightforward once we have Lemma \ref{lem: sequence} below. We say that a pair of metric spaces $(Z,X)$ has the Lipschitz extension property if there exists $L\geq 1$ such that for every $k>0$ and $A\subset Z,$ every $k$-Lipschitz map $\varphi: A\to X$ admits an $Lk$-Lipschitz extension to a map from $Z\to X.$ For any NPC space $M,$ Lang-Schroeder's extension of the Kirszbraun theorem \cite[Theorem A]{zora22244} shows that the pair $(\mathbb{D},M)$ has the Lipschitz extension property with $L=1.$
\begin{lem}\label{lem: sequence}
    Let $Z$ be a separable metric space and let $\omega$ be an ultrafilter. Let $(X_n,d_n,p_n)$ be a sequence of pointed metric spaces with $\omega$-limit $X_\omega$ and $f_n: Z\to (X_n,d_n,p_n)$ a bounded and uniformly $A$-Lipschitz sequence with $\omega$-limit $f_\omega$. Assume that for each $n$, $(Z,X_n)$ has the Lipschitz extension property with a fixed constant $L$.
    
    Let $U\subset Z$ be an open and relatively compact subset and suppose that we are given a $B$-Lipschitz map $u:\overline{U}\to X_\omega$ that agrees with $f_\omega$ on $\partial U$. Setting $C(L,A,B)=3L\max\{A,2B\}$, there exists a bounded sequence of $C(L,A,B)$-Lipschitz maps $u_n: Z \to X_n$ such that $u=[u_n]$ and $u_n|_{\partial U}=f_n|_{\partial U}.$
\end{lem}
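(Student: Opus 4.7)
The plan is to define $u_n$ on $\partial U$ together with a countable dense subset of $\overline U$, with $u_n = f_n$ on $\partial U$ built in by construction, verify an approximate Lipschitz bound there, and then extend to all of $Z$ via the Lipschitz extension property of $(Z, X_n)$.

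First, I would pick a countable dense sequence $\{z_k\} \subset \overline{U}$. Since $U$ is relatively compact, $\partial U$ is compact, so for each interior point $z_k \in U$ there exists an anchor $z_k^* \in \partial U$ realizing $d_Z(z_k, z_k^*) = d_Z(z_k, \partial U)$. For each $k$ and $n$, I would choose $y_n^k \in X_n$ with $[y_n^k] = u(z_k)$, taking $y_n^k := f_n(z_k)$ when $z_k \in \partial U$; this is consistent since $u|_{\partial U} = f_\omega|_{\partial U}$.

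Using $\lim_\omega d_n(y_n^i, y_n^j) = d_\omega(u(z_i), u(z_j)) \leq B d_Z(z_i, z_j)$ and $\lim_\omega d_n(f_n(z_i^*), y_n^i) \leq B d_Z(z_i^*, z_i)$, for each $m$ the set $N_m$ of $n$ on which both families of inequalities hold with additive slack $1/m$ for all $i, j \leq m$ is a finite intersection of $\omega$-large sets, hence $\omega$-large. Setting $m(n) := \sup\{m : n \in N_m\}$, we have $m(n) \to \infty$ along $\omega$. I would then define $u_n$ on $\partial U \cup F_n$, where $F_n := \{z_k : k \leq m(n)\} \cap U$, by $u_n = f_n$ on $\partial U$ and $u_n(z_k) := y_n^k$ on $F_n$.

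The Lipschitz constant of $u_n$ on $\partial U \cup F_n$ splits into three cases: boundary-boundary yields $A$, interior-interior yields $B + 1/m(n)$ by the defining inequalities of $N_{m(n)}$, and for $z \in \partial U, z_k \in F_n$, the anchor inequalities $d_Z(z_k^*, z_k) \leq d_Z(z, z_k)$ and $d_Z(z, z_k^*) \leq 2 d_Z(z, z_k)$ give
\[
d_n(f_n(z), y_n^k) \leq A d_Z(z, z_k^*) + (B + 1/m(n)) d_Z(z_k^*, z_k) \leq (2A + B + 1/m(n)) d_Z(z, z_k).
\]
The Lipschitz extension property then yields $u_n : Z \to X_n$ with Lipschitz constant $L(2A + B + 1/m(n))$, which for $\omega$-a.e. $n$ is bounded by $3L \max\{A, 2B\}$ via elementary case analysis (split on $A \geq 2B$ or $A < 2B$). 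On the $\omega$-small exceptional set where this bound fails I would simply set $u_n := f_n$, which is $A$-Lipschitz and thus also fits the target constant; this does not affect the $\omega$-limit. The boundary condition $u_n|_{\partial U} = f_n|_{\partial U}$ holds automatically since extension preserves values on the domain, density of $\{z_k\}$ together with the uniform Lipschitz bound yields $[u_n(z)] = u(z)$ on $\overline U$, and boundedness follows from boundedness of $f_n$ combined with the Lipschitz estimate. The main obstacle I anticipate is the clean production of anchor points using compactness of $\partial U$; the degenerate case $A = B = 0$ reduces directly to $u_n = f_n$ since then $u$ and $f_\omega|_{\partial U}$ must coincide as a single constant.
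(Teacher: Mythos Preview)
Your overall architecture matches the paper's: pick a countable dense set in $U$, attach to each interior point a nearest boundary anchor, define $u_n$ on the boundary via $f_n$ and on finitely many dense points via representatives of $u(z_k)$, check a Lipschitz bound on this finite set, and invoke the extension property. The boundary--anchor triangle inequality you use is exactly the paper's.

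There is, however, a genuine gap in your Lipschitz verification. You take an \emph{additive} slack: $n \in N_m$ means
\[
d_n(y_n^i, y_n^j) \leq B\, d_Z(z_i, z_j) + \tfrac{1}{m} \qquad (i,j \leq m).
\]
From this you claim ``interior--interior yields $B + 1/m(n)$'' as a Lipschitz constant. That does not follow: the ratio is $B + \frac{1}{m(n)\, d_Z(z_i,z_j)}$, and for fixed $m(n)$ the points $z_i, z_j$ with $i,j \leq m(n)$ may be arbitrarily close compared to $1/m(n)$, so no uniform Lipschitz bound results. The same defect appears in your mixed case, where the final $+1/m(n)$ again fails to convert into a multiplicative constant.

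The paper repairs precisely this point by scaling the slack to the geometry of the finite set: it sets $\delta_j = \min\{d(z_k,z_m),\, d(z_k^*,z_m^*) : 1 \le k < m \le j\}$ and requires the error to be at most $\epsilon B \delta_j$ on $N_j$. Since any two of the first $j$ points are at distance $\geq \delta_j$, the additive error $\epsilon B \delta_j$ is bounded by $\epsilon B\, d_Z(z_k,z_m)$, producing the honest Lipschitz constant $(1+\epsilon)B$ on the interior points and hence $3\max\{A,(1+\epsilon)B\}$ before extension. Your argument is easily fixed by replacing the slack $1/m$ with something like $\delta_m/m$ (or directly $\epsilon B \delta_m$ as in the paper), but as written the step from additive error to Lipschitz constant is incorrect.
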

In \cite[Proposition 2.4]{LWY}, the authors show how to represent H{\"o}lder and Lipschitz maps to asymptotic cones by sequences of H{\"o}lder and Lipschitz maps. We adapt the procedure to add the boundary condition.
\begin{proof}
  Let $\{z_k:k\in \mathbb{N}\}\subset U$ be a countable and dense subset. For each $k,$ choose a point $z_k^*$ on $\partial U$ that minimizes the distance from $z_k$ to $\partial U$. Define, for $j\geq 2$, $$\delta_j = \min \{d(z_k,z_m), d(z_k^*,z_m^*):1\leq k < m \leq j\}.$$ For each $k\in \mathbb{N}$, choose a bounded sequence of points $x_{k,n}\in X_n$ such that $u(z_k)=[x_{k,n}]$. Inductively build a sequence of subsets $\mathbb{N}=N_1\supset N_2\supset\dots $ such that for each $j\geq 2$ we have $\omega(N_j)=1$ and for all $1\leq k,m\leq j$ and $n\in N_j$, $$|d_\omega(u(z_k),u(z_m))-d_n(x_{k,n},x_{m,n})|\leq \epsilon B\delta_j$$ and $$|d_\omega(u(z_k^*),u(z_m))-d_n(f_n(z_k^*),x_{m,n})|\leq \epsilon B\delta_j.$$ For every $j\in\mathbb{N}$, define $M_j=N_j\backslash N_{j+1},$ and $M_\infty=\cap_{i\in\mathbb{N}}N_i.$ We define a function $j:\mathbb{N}\to \mathbb{N}$ on $M_\infty$ by $j(n)=n$, and off $M_\infty$ by setting $j(n)$ to be the unique number such that $n\in M_{j(n)}.$ We define $u_n:\{z_1,\dots, z_{j(n)}\}\cup\partial U\to X_n$ by $u_n(z_k)=x_{k,n}$ and $u_n|_{\partial U}=f_n|_{\partial U}$.
  
  We prove that every $u_n$ is $3\max\{A,(1+\epsilon)B\}$-Lipschitz. Firstly, note that for all $1\leq k <m\leq j(n),$ $$d_n(u_n(z_k),u_n(z_m))\leq d_\omega(u(z_k),u(z_m))+\epsilon B \delta_{j(n)}\leq (1+\epsilon)Bd(z_k,z_m),$$ and the analogous inequality holds if we replace $z_m$ with $z_m^*.$ We thus see that $u_n$ is $(1+\epsilon)B$-Lipschitz on $\{z_1,\dots, z_{j(n)}\},$ and $A$-Lipschitz on $\partial U$. We're left to examine the Lipschitz constant we get when one point is a $z_i$ and the other is in $\partial U$. Given $z_i$ and $x\in \partial U$, 
  \begin{align*}
      d(u_n(z_i),u_n(x))&\leq d(u_n(z_i),u_n(z_i^*))+d(u_n(z_i^*),u_n(x))\leq (1+\epsilon)Bd(z_i,z_i^*) + Ad(z_i^*,x) \\
      &\leq \max\{A,(1+\epsilon)B\} (d(z_i,z_i^*) + d(z_i^*,x)).
  \end{align*}
Observing that $$d(z_i,z_i^*)+d(z_i^*,x)\leq 2d(z_i,z_i^*) + d(z_i,x) \leq 3d(z_i,x),$$ we get the claim.

  By the Lipschitz extension property, each $u_n$ extends to a $3L\max\{A,(1+\epsilon)B\}$-Lipschitz map from $\overline{U}\to X_n.$ Each $u_n$ takes $z_1$ to $x_{1,n},$ which together with Lipschitzness implies the distance to $p_n$ is controlled, and hence we can form the limit $u_\omega =[u_n].$ We obtain the lemma by setting $\epsilon=1.$
  \end{proof}

\begin{proof}[Proof of Proposition \ref{prop: hmapsconvergence}]
   First note that, by Korevaar-Schoen's construction of the locally $L^1$ metrics associated with Lipschitz maps, we have that for any relatively compact sub-surface $U\subset \tilde{S}$, $$\mathcal{E}_\omega (U, f_\omega)=\lim_\omega \mathcal{E}_n(U,f_n).$$ Above, we use $\mathcal{E}_n$ and $\mathcal{E}_\omega$ for energies taken with respect to the targets $(X_n,d_n)$ and $(X_\omega, d_\omega)$ respectively. Now, for $U\subset \tilde{S}$ relatively compact, let $u:\overline{U}\to (X_\omega,d_\omega)$ be a Lipschitz map that agrees with $f_\omega$ on $\partial U,$ i.e., a competitor for the energy. By Lemma \ref{lem: sequence}, one can represent $u$ by a sequence of Lipschitz maps $u=[u_n],$ where $u_n|_{\partial U}=f_n|_{\partial U}$. To compare the energy of $f_\omega$ to that of $u$, we have that, since each $f_n$ is harmonic, $$\mathcal{E}_n(U,f_n)\leq \mathcal{E}_n(U,u_n).$$ Taking the limit with respect to $\omega$ yields that $$\mathcal{E}_\omega (U, f_\omega)=\lim_\omega \mathcal{E}_n(U,f_n)\leq \lim_\omega \mathcal{E}_n(U,u_n)=\mathcal{E}_\omega(U,u),$$ which establishes that $f_\omega$ is harmonic.
\end{proof}
Note that, in Proposition \ref{prop: hmapsconvergence}, if each $f_n$ is equivariant for an action $\rho_n: \pi_1(S)\to \textrm{Isom}(X_n,d_n),$ then each $\rho_n$ has bounded translation lengths and moreover $f_\omega$ is equivariant for the limit action $\rho_\omega=[\rho_n].$

\subsection{Proof of Corollary E}
We prove Corollary E, restated below. We return to the set-up from sections 8.1 and 8.2, and we now choose a sequence $(R_i)_{i=1}^\infty\subset \R^+$ tending to infinity, and assume that the harmonic maps $f_i=f_{R_i}:\tilde{S}\to (N,\nu)$ are equivariant with respect to representations $\rho_i:\pi_1(S)\to G$. Choose a a non-principal ultrafilter $\omega$ and scaling factors $a_i=R_i^{-1}.$ Fixing $x\in \tilde{S}$, in order to apply Proposition \ref{prop: hmapsconvergence} to $f_i:\tilde{S}\to (N,R_i^{-1}\nu, f_i(p)),$ we need some rough control on the Lipschitz constants of the $f_i$'s that holds everywhere and not just away from the critical set.
\begin{prop}\label{roughlipbound}
    On a fixed relatively compact open subset $\Omega\subset S$, there exists a constant $C=C(G,\Omega,\sigma)>0$ such that the maps $f_i$ are uniformly $CR_i$-Lipschitz. 
\end{prop}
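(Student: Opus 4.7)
The plan is to bound the pointwise operator norm $|df_R|_{\sigma,\nu}$ by $CR$ on $\Omega$, which immediately yields the claimed Lipschitz estimate on any lift of $\Omega$ to $\tilde{S}$. The key input is the Maurer--Cartan identity already used in the proof of Proposition \ref{normcomparison}:
\[
f_R^*\nu = R^2|\phi|_{h_R}^2 + R^2\nu(\phi,\phi) + R^2\nu(\overline{\phi},\overline{\phi}),
\]
which reduces the problem to a uniform-in-$R$ bound on $|R\phi|_{h_R}$ over $\Omega$.

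First I would cover $\Omega$ by finitely many coordinate neighbourhoods $V_j$, each conformally identified with a disk $D(r_j)$ contained in a larger conformal disk $D(1) \subset S$. Choosing a holomorphic trivialization of $P$ over each $D(1)$ and passing to the adjoint representation, we obtain an ordinary Higgs field on the trivial bundle to which Proposition \ref{firstbound} applies:
\[
|R\phi|_{h_R}(z) \leq C_j \max_{w\in D(1)}|(R\phi)_a(w)|_{h_R} + C_j = C_j R \max_{w \in D(1)} |\phi_a(w)| + C_j, \quad z \in D(r_j).
\]
The equality uses the fact recorded in section \ref{schurdec} that $|\phi_a|^2$ equals the sum of the squared eigenvalues of the Higgs field, and is therefore independent of the metric $h_R$. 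Since $\phi$ is fixed, the quantity $\max_{D(1)}|\phi_a|$ is a finite constant depending only on $\phi$ and the chosen trivialization. Taking the maximum over the finitely many $V_j$ gives a uniform bound
\[
|R\phi|_{h_R}(z) \leq C(G,\Omega,\sigma)\,R, \qquad z \in \Omega,
\]
for all $R_i$ sufficiently large (the additive constant being absorbed into the linear term).

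Substituting this bound into the Maurer--Cartan expression above yields $|df_R|_{\sigma,\nu}^2 \leq C^2 R^2$ pointwise on $\Omega$, so integrating along geodesics of $\sigma$ shows that $f_R$ is $CR$-Lipschitz on any connected lift of $\Omega$. The content of the proposition is really that the bound is uniform in $R$; the crucial point enabling this is that $|\phi_a|$ is an invariant of $\phi$ alone, independent of the $R$-dependent harmonic metric $h_R$. Beyond organizing the cover and invoking Proposition \ref{firstbound} through the adjoint representation, there is no substantive obstacle.
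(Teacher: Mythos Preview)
Your proposal is correct and follows essentially the same route as the paper: cover $\Omega$ by finitely many conformal disks, apply Proposition \ref{firstbound} on each to bound $|R\phi|_{h_R}$ linearly in $R$, and use that $|\phi_a|=|\eta|$ is an $h_R$-independent invariant of $\phi$. The paper phrases the conclusion via the energy density $e(f_i)$ rather than the Maurer--Cartan formula for $f_R^*\nu$, but this is only a cosmetic difference.
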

\begin{proof}
Let $U$ be any relatively compact conformal disk intersecting $\Omega$. By Proposition \ref{firstbound}, there exists $C_j=C_j(G,U,\sigma)>0,$ $j=1,2,$ such that, on $U$,
 $$|R_i\phi|_{h_{R_i},\sigma}^2\leq C_1R_i^2\max_{z\in U}|\eta(z)|_{\sigma}^2 +C_2.$$ Hence, on $U$, $$\textrm{Lip}(f_i)^2\leq \max_{z\in U} e(f_i)(z)=\max_{z\in U} R_i^2|\phi(z)|_{h_{R_i},\sigma}^2\leq C_1R_i^2\max_{z\in U}|\eta(z)|_{\sigma}^2 +C_2.$$ By covering the closure of $\Omega$ with finitely many disks, we obtain a uniform bound on $\textrm{Lip}(f_i)$ over all of $\Omega$.
\end{proof}
 Applying Theorem \ref{thm: KL} and Proposition \ref{prop: hmapsconvergence} to the sequence above, we obtain a building $(\mathcal{B},d_\omega)$ as the asymptotic cone of our sequence of rescaled pointed symmetric spaces, together with the limiting action $\rho_\omega=[\rho_i]:\pi_1(S)\to \textrm{Isom}(\mathcal{B},d_\omega)$ and the $\rho_\omega$-equivariant harmonic map $f_\omega=[f_{R_i}]: \tilde{S}\to (\mathcal{B},d_\omega).$ As in the introduction, we set $g_{f_\omega}=g(f_\omega)$ to be the $L^1$ measurable pullback metric.
 \begin{cor}[Corollary E]
     As measurable tensors, $g_{f_\omega}=f_\eta^*m,$ and the $\mathfrak{t}^{\C}//W$-valued $1$-form of $f_\omega$ is equal to $\eta$.
 \end{cor}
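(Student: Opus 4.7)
Both claims will be established on $S-B$ and then extended across the discrete critical set.

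For the first claim $g_{f_\omega}=f_\eta^*m$, I will use Korevaar--Schoen's definition of the pullback tensor as an $L^1_{\mathrm{loc}}$ limit of averaged directional squared distances. Since distances in $(\mathcal{B},d_\omega)$ are ultralimits of the rescaled distances in $(G/K,R_i^{-1}\nu)$, one gets $g_{f_\omega}=\lim_\omega R_i^{-2}\,g_{f_{R_i}}$ as measurable tensors. On any relatively compact $U\subset S-B$, the first estimate in Theorem E gives $|R_i^{-2}f_{R_i}^*\nu-f_\eta^*m|_\sigma\le CR_i^{-2}$, so $g_{f_\omega}=f_\eta^*m$ on $U$. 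Because $B$ is discrete and $g_{f_\omega}\in L^\infty_{\mathrm{loc}}$ (Proposition \ref{roughlipbound} passes through the ultralimit), the identity extends to all of $S$.

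For the second claim, since both $\eta_\omega$ and $\eta$ are holomorphic sections of $\mathcal{K}\otimes\ft^{\mathbb{C}}//W$ — $\eta_\omega$ extends across the singular set of $f_\omega$ by Proposition \ref{prop: singular set} and across $B$ by the standard removable-singularities argument used in section \ref{sec: harmonic to building} — it suffices to identify them on (an open dense subset of) $S-B$. The key geometric step is to show that every $p\in S-B$ is a regular point of $f_\omega$, with apartment obtained as the $\omega$-limit of tangent flats to $f_{R_i}$ at $p$. Granting this, on a small disk $U\subset S-B$ the map $f_\omega|_U$ lies in a single apartment $A$, and in any affine chart the derivative $\partial f_\omega|_U$ is the $\omega$-limit of $R_i^{-1}$ times the component of $\partial f_{R_i}$ tangent to $\mathrm{Re}_{R_i}(\mathbb{R}F_\eta)$. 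By the Maurer--Cartan identification, $\partial f_{R_i}=R_i\phi$; projecting to the toral bundle picks out $R_i\phi_s$ modulo exponentially small terms (by Proposition \ref{asymdecoupling} and the near-parallelism in Theorem E). Under the identification $\mathrm{Re}_{R_i}(\mathbb{R}F_\eta)\cong\ft$ coming from the chart, this is precisely $\eta_C$, so modulo $W$ the intrinsic $1$-form is $\eta$.

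The principal obstacle is the regularity of $f_\omega$ on $S-B$: that a small disk $U\subset S-B$ is sent into a single apartment of $\mathcal{B}$. To establish this I plan to use Corollary C together with Theorem B to produce, for each $i$, a totally geodesic flat $F_i\subset G/K$ of dimension $\mathrm{rank}(G)$ such that $f_{R_i}(U)$ is within $(R_i^{-1}\nu)$-distance $O(R_i^{-1}+e^{-cR_i})$ of $F_i$. Indeed, the tangential-to-$\mathrm{Re}_{R_i}(\mathbb{R}F_\eta)$ part of $df_{R_i}$ integrates to a map into $F_i$ up to the $O(e^{-cR_i})$ error from Corollary C, and the normal component (coming from the nilpotent part and $(\phi_s)_u$) has $h_{R_i}$-norm $O(1)$ by Theorem B and Theorem \ref{GRSdecoupling}, which in the rescaled metric is $O(R_i^{-1})$. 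Passing to the $\omega$-limit, both error terms vanish and the flats $F_i$ $\omega$-converge to an apartment $A\subset\mathcal{B}$ containing $f_\omega(U)$. The main technical bookkeeping lies in choosing basepoints and exponential maps consistently so that the ultralimit of the $F_i$ is genuinely identified with an apartment of $\mathcal{B}$ as constructed by Kleiner--Leeb.
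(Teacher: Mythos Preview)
Your treatment of the first claim coincides with the paper's: both invoke Korevaar--Schoen's construction to pass the pullback tensor through the ultralimit and then apply the first estimate of Theorem E.

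For the second claim your route diverges from the paper's. The paper does not attempt to show that every point of $S-B$ is regular for $f_\omega$, nor does it construct approximating flats in $G/K$. Instead it passes through the adjoint representation to an auxiliary harmonic map $f_\omega'$ into the asymptotic-cone building for $\mathrm{SL}(\mathfrak{g})$, uses Proposition \ref{prop: singular set} only to obtain \emph{some} regular point, and then on a small disk in $S_{\mathrm{reg}}-B$ applies Theorem D (the Hitchin WKB problem): the rescaled Weyl-chamber valued distance between $f_{R_i}(\tilde{x})$ and $f_{R_i}(\tilde{y})$ limits to the Weyl-chamber distance in the building, and Theorem D identifies this with the vector $(-\int\mathrm{Re}\,\phi_j)_j$. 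This pins down $f_\omega'$ in an affine chart up to translation and the Weyl group, so its $(1,0)$-derivative is $\eta$ modulo $W$; holomorphic continuation finishes.

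Your flat-approximation argument is plausible in outline and would yield the stronger conclusion that $f_\omega$ is regular on all of $S-B$ (which the paper only cites from \cite{KNPS} in a closing remark, for $\mathrm{SL}(n,\C)$). But it carries genuine technical debt you have flagged without discharging: converting the $O(1)$ bound on the normal component of $df_{R_i}$ into a distance-to-flat bound requires showing that the subspaces $\mathrm{Re}_{R_i}(\mathbb{R}F_\eta)_p$ for varying $p\in U$ integrate to a single totally geodesic flat, which involves holonomy and second-fundamental-form estimates beyond the mere near-parallelism of $\mathrm{Re}_{R_i}$; and your $F_i$ need to be \emph{maximal} flats for their $\omega$-limit to be an apartment in Kleiner--Leeb's sense, whereas $\mathbb{R}F_\phi$ can have rank strictly less than $\mathrm{rank}(G)$, so an extension step is required. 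The paper's route via Theorem D sidesteps all of this by reducing the identification of $\eta$ to a computation of vector distances, which is exactly what Theorem D delivers.
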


\begin{proof}[Proof of Corollary E]
By the construction of $L^1$ measurable pullback metrics in Korevaar-Schoen's work \cite{KS}, for any point $p\in S$ and any tangent vector $v\in T_pS,$ $$g_{f_\omega}(v,v)=\lim_{\omega}R_i^{-2} f_i^*\nu(v,v).$$
    The first part of Theorem E thus implies that $g_{f_{\omega}}=f_\eta^*m$. For the $\ft^{\C}//W$-valued $1$-forms, for every $i$ let $h_{R_i}$ be the harmonic metric obtained by taking our harmonic $G$-bundle to an ordinary harmonic bundle via the adjoint representation. Each harmonic metric determines a harmonic map to the symmetric space of $\textrm{SL}(\g).$ Using the same principal ultrafilter $\omega$ and going through Proposition \ref{roughlipbound} again, we obtain a new harmonic map to a building $f_\omega':\tilde{S}\to (\mathcal{B}',d'),$ where $(\mathcal{B}',d')$ is the asymptotic cone of the symmetric space of $\textrm{SL}(\g)$. 
    Under the adjoint representation, $\ft$ is injectively mapped into a maximal split toral subalgebra $\mathfrak{a}$ of $\mathfrak{sl}(\g)$. The $\ft^{\C}//W$-valued $1$-form of $f_\omega$ is recovered from that of $f_\omega'$, which is a $\mathfrak{a}^{\C}//S_n$-valued $1$-form, by undoing (the complexification of) this mapping.
    
    By Proposition \ref{prop: singular set}, the regular set $S_{reg}$ of $f_\omega'$ is non-empty. Fixing a small contractible disk $U$ in $S_{reg}-B$ and a lift $V$ in $\tilde{S}$, let $x$ and $y$ be points in $U$ with lifts $\tilde{x}$ and $\tilde{y}$ in $\tilde{U}$, and let $\Pi_{R_i}$ be the parallel transport from $x$ to $y$ coming from the flat connection of our harmonic bundle. The vector distance between the harmonic metrics $h_{R_i}(x)$ and $\Pi_{R_i}^*h_{R_i}(y)$, rescaled by $R_i^{-1},$ limits precisely as $R_i\to \infty$ to the Weyl-chamber valued distance between points $f_\omega'(\tilde{x})$ and $f_\omega'(\tilde{y})$ in the limiting building.
 Thus, from Theorem D, the solution to the WKB problem, if $\phi_1,\dots, \phi_{\textrm{dim}\g}$ are the eigen-$1$-forms (with multiplicity) of the adjoint representation of $\phi,$ then $f_\omega'$ is locally described as, up to translation and reflections, $$f_\omega'(z) = (\int_{z_0}^z\textrm{Re}\phi_1,\dots, \int_{z_0}^z \textrm{Re}\phi_{\textrm{dim}\g} ).$$  Taking $\partial$ in the affine chart then yields the $\mathfrak{a}^{\C}//S_n$-valued $1$-form associated with $f_\omega'|_U.$ Undoing the mapping the $\ft^{\C}\to \mathfrak{a}^{\C},$ we see that the $\ft^{\C}//W$-valued $1$-form of $f_\omega|_U$ is $\eta$. By holomorphicity, the $\ft^{\C}//W$-valued $1$-form agrees with $\eta$ everywhere.
\end{proof}
\begin{remark}
 According to \cite[Proposition 3.23]{KNPS}, for $G=\textrm{SL}(n,\C)$, the singular set of a harmonic map to a building with $\ft^{\C}//W$-valued $1$-form $\eta$ is always contained in the critical set.
\end{remark}

\appendix

\section{Hitchin WKB Problem}

We prove Theorem D. Using the new approximation given by Corollary C and the bounds given by Theorem B, the proof becomes a straightforward adaption of Mochizuki's work in \cite{Mo}. The key input from \cite{Mo} is \cite[Corollary 2.19]{Mo}.

We resume notation and conventions from section \ref{HWKBsection}: $(E,\overline{\partial}_E,R\phi,h_R),$ $R>0$ is a ray of harmonic bundles of rank $n$ over a Riemann surface $S,$ and $\gamma:[0,1]\to S$ is a path in the complement of the critical set. We set $\phi_1,\dots, \phi_n$ to be the eigen-$1$-forms of $\phi$ defined in an open neighbourhood of $\gamma$, and we assume that $\gamma$ is non-critical, which means that $\mathrm{Re}(\phi_i(\dot{\gamma}))$ is never equal to $\mathrm{Re}(\phi_j(\dot{\gamma}))$ for $\phi_i\neq \phi_j$. As in section \ref{HWKBsection}, we set $\alpha_i=-\int_\gamma \textrm{Re}(\phi_i),$ ordered so that $\alpha_1\geq \alpha_2\geq \dots \geq \alpha_n.$ For each $R>0,$  $\Pi_{R,\gamma}: E_{\gamma(0)}\to E_{\gamma(1)}$ is the parallel transport operator induced by the flat connection $D_{h_R}$.

We're looking to compute the vector distance, i.e., the Weyl-chamber valued distance, between $(h_R)_{\gamma(0)}$ an $\Pi_{R,\gamma}^*(h_R)_{\gamma(1)}$. In \cite[section 1]{KNPS}, Katzarkov, Noll, Pandit, and Simpson outline how to compute such distances. Given a linear map $f$ between Hermitian vector spaces $(V_i,h_i)$, $i=1,2$, let $||f||_{op}$ be the operator norm. In the work below the metrics will be implicitly understood, so we're okay suppressing the metrics from the notation $||\cdot||_{op}.$ Write $\vec{d}((h_R)_{\gamma(0)},\Pi_{R,\gamma}^*(h_R)_{\gamma(1)})=(\beta_1^R,\dots, \beta_n^R).$ Observe that 
\begin{equation}\label{beta1formula}
    \beta_1^R = \log ||\Pi_{R,\gamma}||_{op}.
\end{equation}
Moreover, for any $k\leq l,$ $\Pi_\gamma$ induces a homomorphism $\wedge^k\Pi_\gamma$ of $\wedge^k E_{\gamma(0)},$ and 
\begin{equation}\label{betakformula}
    \sum_{i=1}^k\beta_i^R = \log ||\wedge^k\Pi_{R,\gamma}||_{op}.
\end{equation}
Hence, for every $k,$ 
\begin{equation}\label{differenceeq}
    \beta_k^R = ||\wedge^k\Pi_{R,\gamma}||_{op}- ||\wedge^{k-1}\Pi_{R,\gamma}||_{op}.
\end{equation}
Thus, to solve the Hitchin WKB problem, we need to estimate $||\wedge^k\Pi_{R,\gamma}||_{op}$ for every $k.$ 

\subsection{Preliminary results}
Following \cite[section 2.4]{Mo}, we define
\begin{itemize}
    \item $M(n,\C)$ to be the set of $n\times n$ matrices with entries in $\C$,
    \item $M(n,\C)_0=\{(a_{ij})_{i,j=1}^n\in M(n,C): a_{ij}=0 \textrm{ for }i\neq j\}$ to be the set of diagonal matrices, and 
    \item $M(n,\C)_1=\{(a_{ij})_{i,j=1}^n\in M(n,C): a_{ij}=0 \textrm{ for }i=j\}$ to be the set of matrices such that all diagonal entries are zero.
\end{itemize}
We fix any norm on the finite dimensional vector space $M(n,\C),$ which we then use to define $C^{k}$-norms $||\cdot ||_k$ for maps from $[0,1]\to M(n,\C)$. 

Next, let $C\geq 0$ be a constant and consider functions $a_j,b_j\in C^0([0,1])$, $j=1,\dots, n,$ such that 
\begin{enumerate}
    \item $\textrm{Re}(a_1(s))\leq \textrm{Re}(a_2(s))\leq\dots \leq \textrm{Re}(a_n(s))$ for any $s$, and
    \item $|b_j(s)|\leq C.$
\end{enumerate}
For $t\geq 0$, put $\alpha_j^t(s):=ta_j(s)+b_j(s)$. Let $A^t$ be the $n\times n$ matrix with diagonal entries $\alpha_j^t.$ 

Let $V$ be a vector bundle with connection $\nabla$. Recall that in a frame $w=(w_1,\dots,w_n)$, the connection is expressed $\nabla=d+A,$ where $A=(A_{ij})$ is a matrix valued $1$-form, and we have the defining relation $\nabla w_i=\sum_jA_{ij}w_j.$
\begin{lem}[Corollary 2.19 in \cite{Mo}]\label{cor2.19}
    There exists $C_1,\epsilon_0>0$ depending only on $C$ such that the following holds. Let $E$ be a $C^1$ vector bundle on $[0,1]$ with frame $v=(v_1,\dots, v_n).$ Let $B:[0,1]\to M(n,\C)_1$ be a continuous map such that $||B||_0\leq \epsilon_0$. For $t\geq 0,$ let $\nabla^t$ be the connection defined in the frame $v$ by $$\nabla^t = d+(A^t(s)+B(s))ds.$$ There exists $G^t\in C^1([0,1],M(n,\C)_1)$ and $H^t\in C^0([0,1],M(n,\C)_0)$ such that
    \begin{enumerate}
        \item $||G^t||_0+||\partial_s G^t+[A^t,G^t]||_0+||H^t||_0\leq C_1||B||_0$, and
        \item in the frame $u^t(s)=(I+G^t(s))^{-1}v(s),$ the connection form of $\nabla^t$ is $(A^t(s)+H^t(s))ds$
    \end{enumerate}
\end{lem}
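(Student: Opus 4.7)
I would treat conditions (1)--(2) as a gauge-fixing problem for the connection $\nabla^t$, to be solved by a Banach contraction argument. The key delicate point, discussed below, is a uniform-in-$t$ inversion of a linear ODE operator. First I would derive the gauge-fixing equation: under the change of frame $u^t = (I+G^t)^{-1}v$, the connection form $(A^t+B)ds$ transforms to $\bigl((I+G)^{-1}(A^t+B)(I+G) + (I+G)^{-1}\partial_sG\bigr)ds$; setting this equal to $(A^t+H^t)ds$ and rearranging yields
\[
\partial_s G + [A^t,G] + B + BG = H^t + GH^t.
\]
Splitting into $M(n,\C)_0$ and $M(n,\C)_1$ components, using $A^t,H^t\in M(n,\C)_0$ and $B,G\in M(n,\C)_1$, gives $H^t = -\mathrm{diag}(BG)$ together with
\[
\mathcal{L}(G) := \partial_s G + [A^t,G] = -B - \mathrm{off}(BG) + GH^t,
\]
which is a quasilinear equation in $G$ alone.

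Next I would produce a right inverse for $\mathcal{L}$ with operator norm bounded independently of $t$. Entry-wise, $(\mathcal{L}G)_{ij} = \partial_sG_{ij} + (\alpha_i^t-\alpha_j^t)G_{ij}$. By hypothesis (1), $\mathrm{Re}(a_i - a_j) \geq 0$ for $i>j$, so combined with (2), $\mathrm{Re}(\alpha_i^t - \alpha_j^t) \geq -2C$ for $i>j$ and $\leq 2C$ for $i<j$, both uniformly in $t$. I would solve the linear ODE $\mathcal{L}(G) = F$ with initial condition $G_{ij}(0)=0$ when $i>j$ and terminal condition $G_{ij}(1)=0$ when $i<j$; then in the variation-of-constants formula the integrating factor stays bounded by $e^{2C}$, producing $\|\mathcal{L}^{-1}\|_{C^0\to C^0}\leq e^{2C}$ uniformly in $t$.

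Finally I would close the contraction. Writing $\mathcal{N}(G) := -G\,\mathrm{diag}(BG) - \mathrm{off}(BG)$, the full equation becomes $G = T(G) := \mathcal{L}^{-1}(-B + \mathcal{N}(G))$. Routine estimates give $\|\mathcal{N}(G)\|_0 \leq c(n)\|B\|_0\|G\|_0(1+\|G\|_0)$ and a matching Lipschitz bound, so for $\|B\|_0 \leq \epsilon_0 = \epsilon_0(C,n)$ sufficiently small, $T$ is a contraction on a ball of radius $O(\|B\|_0)$ in $C^0([0,1],M(n,\C)_1)$, producing a unique fixed point $G^t$ with $\|G^t\|_0 \leq C_1\|B\|_0$. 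Then $H^t := -\mathrm{diag}(BG^t)$ automatically satisfies $\|H^t\|_0 \leq C_1\|B\|_0^2$, and reading off the equation $\mathcal{L}(G^t) = -B + \mathcal{N}(G^t)$ yields $\|\partial_sG^t + [A^t,G^t]\|_0 \leq C_1\|B\|_0$. This establishes (1); assertion (2) is built into the construction.

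The principal obstacle is the construction of the uniform-in-$t$ right inverse for $\mathcal{L}$: the coefficient $\alpha_i^t - \alpha_j^t$ of $G_{ij}$ grows linearly in $t$, so without further structure the integrating factor in the variation-of-constants formula would blow up exponentially with $t$. The monotonicity assumption (1) on $\mathrm{Re}(a_j)$ is precisely what allows one to choose, entry by entry, which endpoint to impose the boundary condition at so that the real part of the relevant exponent has the favourable sign, producing a bound depending only on $C$. Once this uniform linear estimate is in hand, everything else is standard nonlinear analysis.
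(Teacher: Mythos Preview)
The paper does not supply its own proof of this lemma: it is quoted verbatim as Corollary~2.19 of Mochizuki \cite{Mo}, with only a remark that Mochizuki's strict ordering hypothesis $\mathrm{Re}(a_1)<\cdots<\mathrm{Re}(a_n)$ can be weakened to the non-strict one. So there is no proof in the paper against which to compare yours.

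That said, your plan is correct and is the standard way such a gauge-normalisation lemma is established. The heart of the matter is exactly the point you isolate: the entrywise operator $\partial_s + (\alpha_i^t-\alpha_j^t)$ has a right inverse of norm at most $e^{2C}$ uniformly in $t$, provided one chooses the boundary condition at $s=0$ for $i>j$ and at $s=1$ for $i<j$, and this choice is made possible precisely by the monotonicity hypothesis on $\mathrm{Re}(a_j)$. Once that uniform linear estimate is in hand, the quadratic nonlinearity is handled by a routine contraction. This is almost certainly the skeleton of Mochizuki's argument as well; indeed the paper's remark (that only the non-strict inequality is used, and only in Mochizuki's Lemma~2.21) points to exactly this step, since the sign of $\mathrm{Re}(a_i-a_j)$ is what dictates the endpoint choice.

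One minor correction: taking the diagonal part of your gauge equation $\partial_s G + [A^t,G] + B + BG = H^t + GH^t$ gives $H^t = \mathrm{diag}(BG)$, not $-\mathrm{diag}(BG)$ (since $G\in M(n,\C)_1$ forces $\mathrm{diag}(GH^t)=0$). This sign slip is harmless for the estimates and the contraction argument.
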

\begin{remark}
    Mochizuki assumes $\textrm{Re}(a_1)<\textrm{Re}(a_2)<\dots < \textrm{Re}(a_n).$ The only place he uses this assumption is in the proof of his Lemma 2.21, and only $\textrm{Re}(a_1)\leq\textrm{Re}(a_2)\leq\dots \leq \textrm{Re}(a_n)$ is needed.
\end{remark}

\subsection{Solution}
\begin{proof}[Proof of Theorem D]
Choose a precompact and contractible open neighbourhood $U\subset S$ that does not intersect the critical set and that contains the image of our non-critical path $\gamma([0,1])$. We equip $U$ with a flat metric, which we use to measure norms. Extract open balls $B_1(r),\dots, B_m(r)$ of small radius $r>0$ such that 
\begin{enumerate}
    \item $\cup_i B_i(r)$ does not intersect the critical set, and 
    \item $U\subset \cup_i B_i(r/2)$.
\end{enumerate}
Since $\cup_i B_i(r/2)$ has a definite distance from the critical set, we can find positive numbers $A$ and $b$ such that in restriction to each $B_j(r/2)$, $R\phi$ lies in $S(bR,A)$. As we chose $U$ to be contractible, over $U$ we can split $E$ into generalized eigenspaces as $E = \oplus_{i=1}^m E_i$, with eigen-$1$-forms $\phi_i$ and projections $\pi_i$. For each $i$, let $h_R^i$ be the restriction of $h_R$ to $E_i$ and $h_R^{\oplus}=\oplus_{i=1}^m h_R^i$ and let $\nabla_{h_R}^{\oplus}$ be the  Chern connection. Write $*_R^{\oplus}$ for the adjoint operator of $h_R^{\oplus}.$ Corollary C implies that on $U,$ $D_{h_R}$ differs from
\begin{equation}\label{connectionestimate}
    \nabla_{h_R}^{\oplus} +\sum_{i=1}^m R(\phi_i+\overline{\phi}_i)\pi_i + R\phi_n+R\phi_n^{*_R^{\oplus}}
\end{equation}
by an $\textrm{End}(E|_U)$ valued $1$-form whose $h_R$-norm is on the order of $Ce^{-cR},$ where $C$ and $c$ depend on $n,r$, and the eigen-$1$-forms of $\phi$. Note that Theorem B implies that $|R\phi_n|_{h_R}$ is uniformly controlled, and the proof of Corollary C demonstrates $|R\phi_n^{*_R^{\oplus}}|_{h_R}$ differs from $|R\phi_n^{*_{h_R}}|_{h_R}=|R\phi_n|_{h_R}$ by a term that decays like $Ce^{-cR}.$

To compute the operator norms of parallel transport operators, we will work on the pullback vector bundle $\gamma^*E=\oplus_{i=1}^m\gamma^*E_i$ over $[0,1]$ with connections $\gamma^*D_{h_R}$ and $\gamma^*\nabla_{h_R},$ and choose good frames. Specifically, we take $h_i$-orthonormal frames $u_i=(u_{i1},\dots, u_{im_i})$ for $E_i$ that are $\gamma^*\nabla_i$ parallel, which means that $\gamma^*\nabla_i u_{ij}=0$ for all $i,j$. We then pull these back to frames for $\gamma^*E_i$, and combine all of the frames for the $\gamma^*E_i$'s to get a frame for $\gamma^*E$. We then normalize the frame dividing every $u_{ij}$ by $R$. Setting $D_{h_R}^{\oplus}=  \nabla_{h_R}^{\oplus} +\sum_{i=1}^m R(\phi_i+\overline{\phi}_i)\pi_i$, with respect to this frame, $\gamma^* D_{h_R}^\oplus$ has diagonal connection form $$A(s)ds = \textrm{diag}(2\textrm{Re}\phi_1(s),\dots, 2\textrm{Re}\phi_n(s)).$$ By the comments around (\ref{connectionestimate}), in this frame, the connection $\gamma^*D_{h_R}$ has connection form $$(A(s)+B_0(s)+B_1(s))ds,$$ where 
\begin{enumerate}
\item $B_0$ is a $C^0$ map from $[0,1]\to M(n,\C)_0$ that is $C^\infty$ on $(0,1)$, and
\item $B_1$ is a $C^0$ map from $[0,1]\to M(n,\C)_1$ that is $C^\infty$ on $(0,1)$. 
\end{enumerate}
Using our comparison to (\ref{connectionestimate}), as well as our uniform controls, there exists $C>0$ with the same dependencies as above such that both $B_0$ and $B_1$ satisfy $$|B_m(s)|\leq CR^{-1}.$$ Taking $R$ sufficiently large, we may assume that both $||B_m||_0$  are small enough to satisfy the hypothesis of Lemma \ref{cor2.19}. Invoking Lemma \ref{cor2.19}, we know that there exists a $C^1$ function $G:[0,1]\to M(r,\C)$, a $C^0$ function $H:[0,1]\to M(n,\C)_0$, and a constant $C_1$ such that 
\begin{enumerate}
\item in $C^1$-norm, $||G||_{C^1}<C_1R^{-1}$,
\item in $C^0$ norm, $||H||_{C^0}<C_1R^{-1}$, and
\item with respect to the frame $v=(I+G)u$, the connection form of $\gamma^*D_{h_R}$ is $$(A(s)+B_0(s)+H(s))ds.$$
\end{enumerate}
Next, note that the frame $$v'=\textrm{exp}\Big (-\int_0^s(A(p)+B_0(p)+H(p))dp\Big )v$$ is flat for $\gamma^*D_{h_R}$. Therefore, the matrix of the parallel transport operator for $\gamma^*D_{h_R}$ from $\gamma(0)$ to $\gamma(1)$ is $$(I+G(1))\textrm{exp}\Big (-\int_0^s(A+B_0+H)dp\Big )(I+G(0))^{-1}.$$ Using the Gram-Schmidt process, we transform the frame $v'$ into an orthogonal frame such that all vectors have norm $R^{-1}.$ Denoting the new frame by $q$, it can be described by $$q(s)=(1+K(s))v'(s),$$ where $||K(s)||_{C^0}$ decays like $R^{-1}.$ If we set $$L(s)=(1+K(s))(I+G(s))-I,$$ then $\Pi_{R,\gamma}$ is represented in the frame $q$ by $$Z_{\gamma}:=(1+L(1))\textrm{exp}\Big (-\int_0^s(A(p)+B_0(p)+H(p))dp\Big )(I+L(0))^{-1}.$$ With this representation, we directly compute operator norms. For some constant $C>0,$ since $||L(s)||_{C^0}$ is of order at most $R^{-1}$, we have
$$\left|\log ||Z_\gamma||_{op} - \log ||\textrm{exp}\Big (-\int_0^s(A(p)+B_0(p)+H(p))dp\Big )||_{op}\right| \leq CR^{-1}.$$
Next, using the bounds on $B_0$ and $H$, together with the definition of $\alpha_1$ and formula (\ref{beta1formula}), 
$$\left|\log ||\textrm{exp}\Big (-\int_0^s(A(p)+B_0(p)+H(p))dp\Big )||_{op} - 2\alpha_1\right| \leq CR^{-1}.$$ 
Putting these together,
$$|\log ||Z_\gamma||_{op}- 2\alpha_1|\leq CR^{-1}.$$
Estimating $||\wedge^k Z_\gamma||_{op}$ in the same fashion, but with (\ref{betakformula}), we find that for all $k$, $$|\log ||\wedge^k \Pi_{R,\gamma}||_{op}
-2\sum_{j=1}^k\alpha_j|\leq CR^{-1}.$$ Theorem D then follows from inserting the inequalities above into the formula (\ref{differenceeq}). To understand the dependencies, note that our choices for $r>0$ depend on $S$ and $\gamma$.
\end{proof}
\begin{remark}
In the proof above, we can decompose the connection form of $\gamma^*D_{h_R}$ further: $B_1$ decomposes according to the decomposition $\textrm{End}(E)=\oplus_{i,j=1}^m\textrm{Hom}(E_i,E_j)$. It should be possible to prove a refinement of Lemma \ref{cor2.19} in which we give $G$ and $H$ the same decay as the connection form in any $\textrm{Hom}(E_i,E_j)$ block.

Such a refinement would lead to a more precise version of Theorem D: on the entries of $\frac{1}{R}\vec{d}((h_R)_{\gamma(0)},\Pi_{R,\gamma}^*(h_R)_{\gamma(1)})$, corresponding to a subspace $E_i$ such that $\phi|_{E_i}$ is semisimple, the convergence should be exponential in $R$.
\end{remark}

\bibliographystyle{plain}
\bibliography{bibliography}

\end{document}